\documentclass[]{amsart}
\usepackage{amssymb,amscd,amsthm}
\usepackage{graphicx}
\usepackage{color}
\usepackage[outdir=./]{epstopdf}
\usepackage{epsfig}

\usepackage[T1]{fontenc}
\usepackage[utf8]{inputenc}
\UseRawInputEncoding

\usepackage[english]{babel}
\usepackage{bm}
\usepackage{amsthm,amssymb,amsmath}
\usepackage{enumerate}
\usepackage{mathtools}
\usepackage[export]{adjustbox}

\setlength{\textwidth}{\paperwidth}
\addtolength{\textwidth}{-2.6in}
\calclayout

 \newtheorem{thm}{Theorem}[section]
 \newtheorem{thmA}{Theorem}
 \newtheorem{cor}[thm]{Corollary}
 \newtheorem{lemma}[thm]{Lemma}
 \newtheorem{prop}[thm]{Proposition}
 \theoremstyle{definition}
 \newtheorem{defn}[thm]{Definition}
 \newtheorem{exmp}[thm]{Example}
\newtheorem{notation}[thm]{Notation}

 \theoremstyle{remark}
 \newtheorem{rem}[thm]{Remark}
 \newtheorem{afirm}[thm]{Claim}
 \newtheorem{note}[thm]{Notation}
 \newtheorem{assumption}[thm]{Assumption}
 \newtheorem{quest}[thm]{Question}

 \numberwithin{equation}{subsection}
 \newtheorem{ack}{Acknowledgment}

\makeatletter
\newcommand{\proofpart}[2]{%
  \par
  \addvspace{\medskipamount}%
  \noindent\emph{Part #1: #2}\par\nobreak
  \addvspace{\smallskipamount}%
  \@afterheading
}
\makeatother

\newcommand{\cT}{\text{$\mathcal{T}$}}

\newcommand{\cB}{\text{$\mathcal{B}$}}

\newcommand{\cE}{\text{$\mathcal{E}$}}

\newcommand{\FF}{\text{$\mathcal{F}$}}

\newcommand{\CC}{\text{$\mathcal{C}$}}
\newcommand{\cL}{\text{$\mathcal{L}$}}

\newcommand{\cZ}{\text{$\mathcal{Z}$}}

\newcommand{\sg}{\sigma}
\newcommand{\hol}{\operatorname{hol}}

\newcommand{\id}{\operatorname{id}}

\newcommand{\Diff}{\operatorname{Diff}}
\newcommand{\Homeo}{\operatorname{Homeo}}

\newcommand{\PSL}{\operatorname{PSL}}
\newcommand{\Ker}{\operatorname{Ker}}

\newcommand{\Stab}{\operatorname{Stab}}
\newcommand{\Hol}{\operatorname{Hol}}
\newcommand{\St}{\operatorname{St}}
\newcommand{\Intr}{\operatorname{int}}

\newcommand{\eu}{\operatorname{eu}}
\newcommand{\rot}{\operatorname{rot}}
\newcommand{\lvl}{\operatorname{Lvl}}

        \newcommand{\field}[1]{\text{$\mathbb{#1}$}}
        \newcommand{\N}{\field{N}}
        \newcommand{\Z}{\field{Z}}
        
        \newcommand{\R}{\field{R}}
        \newcommand{\C}{\field{C}}
        \newcommand{\HH}{\mathbb{H}}


\newdimen\theight
\def\TeXref#1{%
             \leavevmode\vadjust{\setbox0=\hbox{{\cT
                     \quad\quad  {\small \textrm #1}}}%
             \theight=\ht0
             \advance\theight by \lineskip
             \kern -\theight \vbox to
             \theight{\rightline{\rlap{\box0}}%
             \vss}%
             }}%

\makeatletter
\def\blfootnote{\gdef\@thefnmark{}\@footnotetext}
\makeatother

\begin{document}

\title{Every noncompact surface is a leaf of a minimal foliation}

\author{Paulo Gusm\~ao$^\dagger$ \& Carlos Meni\~no Cot\'on$^\ddagger$}

\blfootnote{\textup{2010} \textit{Mathematics Subject Classification}: Primary 37C85, 57R30}

\maketitle
	
\address{$\dagger$ Departamento de An\'alise,  Universidade Federal Fluminense,\\ Instituto de Matem\'atica e Estat\'istica, Rua Professor Marcos Waldemar de Freitas Reis  S/N, Campus Gragoatá, CEP 21941-916, RJ, Brazil.}\\

\address{$\ddagger$ CITMAGA \&
Departamento de Matem\'atica Aplicada I, Universidade de Vigo\\ 
Instituto Innovacións Tecnolóxicas, CP 15782 Santiago de Compostela \& Escola de Enxe\~ner\'ia Industrial, Rua Conde de Torrecedeira 86, CP 36208, Vigo, Spain.}\\

\email{email: phcgusmao@id.uff.br \& carlos.menino@uvigo.es}

\begin{abstract}
We show that any noncompact oriented surface is homeomorphic to the leaf of a minimal foliation of a closed $3$-manifold. These foliations are (or are covered by) suspensions of continuous minimal actions of surface groups on the circle. Moreover, the above result is also true for any prescription of a countable family of topologies of noncompact surfaces: they can coexist in the same minimal foliation. All the given examples are hyperbolic foliations, meaning that they admit a leafwise Riemannian metric of constant negative curvature. Many oriented Seifert manifolds with a fibered incompressible torus and whose associated orbifold is hyperbolic admit minimal foliations as above. The given examples are not transversely $C^2$-smoothable.
\end{abstract}

\section*{Introduction}
It is well known that every surface is homeomorphic to a leaf of a $C^\infty$ codimension one foliation on a closed $3$-manifold \cite{Cantwell-Conlon-1987}, these leaves appear at the infinite level of those foliations and they are represented as a Hausdorff limit of finite level leaves (see \cite[Chapter 5]{Candel-Conlon-I-2000} for an overview on the theory of levels). Every foliation with more than one level cannot be minimal, this opens the question about which noncompact surfaces can be homeomorphic to leaves of a minimal foliation on some closed $3$-manifold. 

This question is related to the so-called ``Realization Problem'' that studies what kind of manifolds can be homeomorphic to leaves of foliations. Recall that there exists manifolds of dimension greater than $3$ which are not homeomorphic to any leaf of any  codimension one foliation (see for instance \cite{Ghys-NonLeaf}).

Candel's Uniformization Theorem~\cite{Candel1} implies that a codimension one oriented foliation on a $3$-manifold with no transverse invariant measure (this is the usual case) admits a leafwise hyperbolic metric varying continuously in the ambient space. Thus, it is completely natural to restrict the previous question to minimal hyperbolic foliations.

A noncompact surface (without boundary) is said to have {\em finite type\/} if it has finitely many ends and all of them are planar, i.e., they admit neighborhoods without genus. Equivalently, a noncompact surface of finite type is homeomorphic to a closed surface punctured along finitely many points. A surface of non-finite type is said to be of {\em infinite type}.

There exist several conditions implying that all the leaves of a minimal hyperbolic foliation on a closed $3$ manifold must have the same type, for instance: every foliation on a Sol-manifold, center-stable leaves of transitive Anosov flows or those minimal hyperbolic foliations where each leaf has a finitely generated holonomy group \cite{ADMV,ADMV2}. In addition, if a minimal hyperbolic foliation on a closed $3$-manifold admits a transverse invariant measure, then all the leaves are simply connected or none of them are \cite[Remark 2.4]{ABMPW}.

The study of minimal hyperbolic foliations is extensively treated in \cite{ADMV2} but in each one of the explicit examples of that work where the leaves are of finite type, the leaves are just planes or cylinders. 

These results are the motivations to this work, whose main result is the following:

\begin{thmA}\label{Main Theorem}
Let $O$ be a typical orbifold. There exists a compact Seifert oriented $3$-manifold $M$ whose base orbifold is $O$ such that, for every countable family of noncompact oriented surfaces $S_n$, $n\in\N$, there exists a transversely bi-Lipschitz minimal hyperbolic foliation $\FF$ on $M$, transverse to its fibers, and such that for each $n\in\N$ there exists a leaf $L_n\in\FF$ homeomorphic to $S_n$.
\end{thmA}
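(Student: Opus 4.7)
The plan is to realize $\FF$ as a suspension of a representation $\rho : \pi_1^{\mathrm{orb}}(O) \to \Homeo^+(S^1)$, passing to a finite cover of $M$ to handle orbifold torsion if necessary (this accounts for the ``covered by'' clause in the abstract). In such a suspension the leaf through the fiber over $x\in S^1$ is the covering of $O$ corresponding to $\Stab_\rho(x)$; the foliation is minimal iff the $\rho$-action is minimal on $S^1$; and Candel's theorem turns the hyperbolic structure on $O$ into a transversely continuous leafwise hyperbolic metric. The problem therefore reduces to constructing a minimal Lipschitz action of $\pi_1^{\mathrm{orb}}(O)$ on $S^1$ which has, along a countable collection of distinct orbits, prescribed torsion-free stabilizers $H_n \leq \pi_1^{\mathrm{orb}}(O)$ chosen so that $\HH^2/H_n \cong S_n$.

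The first step is the purely topological realization: for each $S_n$ I would find a subgroup $H_n$ with $\HH^2/H_n\cong S_n$. Using the Ker\'ekj\'art\'o--Richards classification, an oriented noncompact surface is determined by its genus (possibly infinite) and by the pair (end space, subset of non-planar ends), and the ``typical'' hypothesis on $O$ should be exactly what is needed to realize every such invariant as an unramified cover of $O$: convex-cocompact free or surface subgroups take care of the finite geometric type case, while geometrically infinite subgroups handle infinite genus or Cantor-like end spaces. I would then take as a starting dynamical model the Fuchsian boundary action $\rho_0$ of $\pi_1^{\mathrm{orb}}(O) \subset \PSL(2,\R)$ on $\partial \HH^2\cong S^1$, which is already minimal and Lipschitz but free outside a countable set, so its generic leaves are all planes.

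To install the prescribed stabilizers I would perform an equivariant blow-up: pick a single $\rho_0$-orbit for each $n$, replace it by an inserted closed interval $I_n$ on which $H_n$ acts so that a distinguished endpoint $x_n\in I_n$ has $\rho$-stabilizer exactly equal to $H_n$, and spread the construction by equivariance. Collapsing all the $I_n$ back to points recovers $\rho_0$, so the resulting continuous $\rho$ is semiconjugate to a minimal action and has the right leaf topologies along the chosen orbits. The main obstacle will be reconciling three requirements at once: (i) preserving minimality of the enlarged action despite the introduction of countably many non-free orbits, which presumably forces an additional collapse of the wandering complementary components; (ii) choosing the lengths $|I_n|$ and the inserted local dynamics with fast enough decay and bounded distortion so that all generators remain Lipschitz, uniformly under the countable equivariant family; and (iii) compatibility of $H_n$-invariance inside $I_n$ with the $\pi_1^{\mathrm{orb}}(O)$-translates, particularly when $S_n$ has infinite genus or infinitely many ends, so that distinct blow-ups do not interfere. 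Once all this is arranged, Denjoy's theorem for $C^2$ circle diffeomorphisms is the structural obstruction that blocks any $C^2$-smoothing, confirming the last claim of the abstract.
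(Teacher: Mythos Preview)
Your blow-up scheme has a structural obstruction that kills it before the technical issues (i)--(iii) even arise. If the resulting action $\rho$ is semiconjugate to the Fuchsian action $\rho_0$ via a monotone collapse $\pi:S^1\to S^1$, then for every $x$ the stabilizer $\Stab_\rho(x)$ injects into $\Stab_{\rho_0}(\pi(x))$, and the latter is always cyclic (trivial, parabolic, or hyperbolic). So every leaf of the suspended foliation is a plane or a cylinder, and no surface of higher topological complexity can appear. Inserting an interval $I_n$ over a $\rho_0$-orbit cannot make a non-cyclic $H_n$ fix a point of $I_n$: elements of $H_n$ outside the original cyclic stabilizer will carry $I_n$ to disjoint translates. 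The paper states and proves exactly this obstruction (Proposition~3.3) for projective representations, and the Kopell lemma (not Denjoy) extends it to anything $C^1$-conjugate to one; this is why the construction must be genuinely non-$C^1$ and \emph{not} semiconjugate to a Fuchsian action.

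The paper's route is therefore quite different from yours. The separating loop $\sigma$ decomposes $\pi_1^{\mathrm{orb}}(O)$ as an amalgamated product $G^\triangleright *_{\langle\ell\rangle} G^\triangleleft$ over a parabolic $\Z$. On each factor one keeps the honest Fuchsian circle action; these give two foliated ``blocks'' whose leaves have only three possible topologies (Proposition~1.6). The whole representation $\rho_{c,d}$ is then determined by a single gluing datum: a Lipschitz homeomorphism $c$ (and an auxiliary $d$) of $S^1$ commuting with the parabolic $\ell$, equivalently a homeomorphism of the fundamental interval $[0,\ell(0)]$. This $\rho_{c,d}$ is \emph{not} semiconjugate to any Fuchsian action once $c$ is non-projective, so the cyclic-stabilizer barrier disappears. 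The target surface $S_n$ is encoded combinatorially as a pattern of boundary identifications between type-$0$ and type-$1$ leaves of the two blocks (Section~2), and this pattern is forced by prescribing two synchronized bi-infinite sequences $\mathtt{a},\mathtt{b}\subset[0,\ell(0)]$ and requiring $c(a_k)=b_k$. A Baire-category argument in the space of such gluings (Lemmas~3.7--3.12) then shows that generically the stabilizer at each $b_k$ is exactly the ``predefined'' one coming from the combinatorial pattern, so the leaf through $b_{n,0}$ is the nonrecurrent completion of the seed $W^\nu_F$ and hence homeomorphic to $S_n$ (Proposition~4.3). Lipschitz regularity is obtained directly from a distortion estimate for $\ell$ (Lemma~3.4), not from any decay of interval lengths.
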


These are, as far as we know, the first examples of minimal hyperbolic foliations on some closed $3$-manifold where leaves of finite and infinite type do coexist.

An orbifold $O$ will be called {\em typical} if its underlying space admits a simple separating loop so that each connected component of the complement of this loop inherits an orbifold structure from $O$ that admits a hyperbolic structure of finite area (every end is a cusp). As a consequence, $O$ also admits a hyperbolic structure. For instance, every closed surface of genus $g\geq 2$ is a typical orbifold.

For a Seifert oriented manifold with oriented base of genus $g$, let $$\{(o_1,g);b;(a_1,b_1),\dots,(a_k,b_k)\}$$ be a symbolic presentation. Here $o_1$ means that both the ambient $3$-manifold and the base orbifold are orientable, $b$ is an integer representing an ordinary fiber  with a tubular neighborhood fibered by circles performing $b$ turns around it. The pairs $(a_i,b_i)$ satisfy $0<b_i<a_i$, with $a_i,b_i\in \N$, they are structure constants of the exceptional fibers and represent tubular neighborhoods fibered by circles performing $2\pi b_i/a_i$ rotations around the respective exceptional fiber.

The {\em (rational) Euler number} of the  Seifert $3$-manifold $M$ is defined as $\eu(M)=b+\sum b_i/a_i$ and it is an invariant of the Seifert manifold. Selberg's Theorem \cite{Selberg} implies that all of our examples are finitely covered by suspensions of a surface group action; this finite cover is a trivial fiber bundle if and only if the Euler number is zero.

Since the ambient manifold of our examples is always a Seifert manifold where the leaves are transverse to the fibers, they are easily shown to be also $\R$-covered, i.e. the leaf space of the lifted foliation to the universal covering space is homeomorphic to $\R$ (see e.g. \cite{Fenley}). 

We want to remark that in a recent work \cite{ABMPW}, S. \'Alvarez, J. Brum, M. Mart\'inez and R. Potrie provide an interesting construction of a minimal hyperbolic lamination on a compact space where all the noncompact oriented surfaces are realized (topologically) as leaves. They also show that this lamination can be embedded in $\C P^3$, but it cannot be realized as a minimal set of a codimension one foliation. The techniques used in \cite{ABMPW} are completely different from the ones given in this work.

Note that not every Seifert oriented manifold with a typical base admits minimal foliations such as those given by Theorem~\ref{Main Theorem}, this is a consequence of  \cite[Teor\'em\'e 3]{Ghys2}: let $\Sigma$ be a closed surface with genus $\geq 2$ and let $\phi:\pi_1(\Sigma)\to \Homeo_+(S^1)$ be a homomorphism, let $M$ be the Seifert manifold obtained by the quotient of the diagonal action of $\pi_1(\Sigma)$ on $\widetilde{M}\times S^1$ via $\phi$ and assume that $|\eu(M)|= 2g-2$ (i.e., it is maximal\footnote{The Euler number of this Seifert manifold is the same as the Euler number of the oriented flat bundle defined by $\phi$ over $\Sigma$, hence it satisfies the Milnor-Wood inequality.}), then each leaf of the suspended foliation (see Subsection~\ref{ss:suspensions} for details) must be a plane, a cylinder or a torus. Of course, the last option does not occur if the foliation is minimal.

Our method of construction allows us to produce examples of minimal foliations with nonplanar and noncylindrical leaves on ambient Seifert fibrations over hyperbolic surfaces with Euler number (in absolute value) $2g-4k$, with $k$ ranging between $1$ and the integer part of $g/2$. Particular cases of the above Theorem are those where $M$ is the product of $S^1$ with any hyperbolic closed surface of even genus. Our examples are suitable perturbations of rather explicit group actions obtained by amalgamation of Fuchsian ones, so we expect that the construction can be improved to obtain more ambient $3$-manifolds where Theorem~\ref{Main Theorem} also works.

It is worth noting that the transverse regularity of our examples is bi-Lipschitz (and moreover differentiable) but we were not able to reach transverse regularity $C^1$. We conjecture that this regularity should be achieved with some precise modifications of our construction. Regularities higher than $C^1$ cannot be obtained with our present construction and seem out of the reach at this point.

From the point of view of the ``Realization Problem'' it is also interesting to understand which Riemannian manifolds can be realized as leaves of foliations; bounded geometry is an obvious obstruction but, in general, there exist Riemannian manifolds with bounded geometry that are not (quasi)-isometric to any leaf of any codimension one foliation (see \cite{Hurder1993} for a detailed study of the coarse geometry of foliations). The quasi-isometry types of the leaves in our examples come from lifts over typical closed orbifolds, but it is unclear if every possible lift can be so realized.

For the sake of readability, now we will provide a sketch of the topological realization of a punctured torus as a leaf in a minimal $C^0$ foliation on the manifold $M= S^1\times S$, where $S$ is a closed surface of genus $2$.

First note that $\pi_1(S)$ is described as an amalgamated product of two free groups representing the fundamental groups of two handles, namely $H^\triangleright$ and $H^\triangleleft$, attached by their boundaries. Choose a hyperbolic metric of finite area on a  once punctured torus (ie, the interior of a handle), its fundamental group acts by isometries in the hyperbolic plane and can be represented in $\PSL(2,\R)$ by the induced action in the circle at infinity. The suspensions of these specific representations over the handles $H^\triangleright$ and $H^\triangleleft$, will be called {\em projective foliated blocks} and denoted $\FF^\triangleright$ and $\FF^\triangleleft$. They are minimal by construction. The leaf topology of these foliated blocks is well understood, all but countably many leaves are homeomorphic to the universal covering space of a handle (see Proposition~\ref{p:3 topologies}). Observe that each foliated block has a transverse torus whose trace foliation comes from the suspension of a parabolic circle diffeomorphism and therefore the trace orbits are lines spiraling towards a single closed orbit, these orbits are identified with the boundary components of leaves of the foliated block. The construction of foliated blocks can be made in any typical orbifold and this is the reason why we deal with this generality in Section~\ref{s:projective blocks}.

The second main point is gluing back the fundamental blocks $\FF^\triangleright$ and $\FF^\triangleleft$ in a suitable way in order to guarantee the existence of a leaf homeomorphic to the chosen one. In section \ref{s:transverse gluing} it is shown that gluing maps between the trace foliations of the boundary tori of the foliated blocks are bijective with a suitable space of interval homeomorphisms. Therefore the family o gluing maps forms a Baire space. In the third section we show several generic properties of these gluing maps. Observe that for any gluing map the resulting foliation is still minimal since any foliated block is also minimal.

The most important concept now is that of ``predefined stabilizer''. Topologically, it should be understood as ``predefined junctions of leaves''. Instead of working with the whole space of gluing maps, we restrict to those maps where the images of some discrete set of points are defined ``{\em a priori}'' (these points are identified with boundary components of the leaves of the foliated blocks). For instance, if we want to create a handle in a leaf, we can predefine the image of just two points in order to reproduce the construction given in Figure~\ref{f:H1} in the leaf passing through those points.

Now, a Baire type argument implies that for a generic choice of gluing maps with the same predefined images of points no more topologies arise (generically we are always attaching new generic leaves of the foliated blocks to the predefined construction and this does not produce other stabilizers different from the predefined ones). As a consequence the leaf passing through the predefined points is homeomorphic to a punctured torus as desired.

This process can be continued inductively in order to realize any oriented noncompact surface and, moreover, any countable family of surfaces as stated in the Main Theorem~\ref{Main Theorem}. This is done in Section~\ref{s:realizing topology}.

\begin{ack}
We want to thank the Uruguayan team formed by professors S. \'Alvarez, J. Brum, M. Martinez and R. Potrie (Universidad Federal de la Rep\'ublica - Uruguay) for their interest, ideas and stimulating talks at Rio and Montevideo. We also want to thank Ministerio Ciencia y Educaci\'on (Uruguay) for its support during the research visit of the second author to that institution. The second author also wants to thank the Programa Nosso Cientista Estado do Rio de Janeiro 2015-2018 FAPERJ-Brazil (``Din\^amicas n\~ao hiperb\'olicas''), MathAmSud 2019-2020 CAPES-Brazil (``Rigidity and Geometric Structures on Dynamics''), the CNPq research grant 310915/2019-8 and the Ministerio de Ciencia e Innovaci\'on (Spain) grant PID2020-114474GB-I00 that partially supported this research. 
\end{ack}


\section{Projective foliated blocks}\label{s:projective blocks}

\subsection{Hyperbolic orbifolds with cusps}
For basic definitions and results relative to orbifolds we refer to \cite[Chapter 13]{Thurston} and \cite[Chapters 2, 4]{Montesinos}. Informally, an orbifold is a topological space (called the {\em underlying space}) which admits an open covering such that any open set of the cover is the quotient of an euclidean ball by a finite group action, the changes of coordinates beetween these ``charts'' must be coherent with the given structure. The orbifold is orientable if there is a global orientation on the lifted charts which is preserved by coordinate changes.

The underlying space of a $2$-dimensional oriented closed (compact with no boundary) orbifold is just a compact surface with finitely many distinguished points where the isotropy group is non trivial, a neighborhood of these points looks like a cone which is the quotient of an open $2$-ball by a rational rotation, these form the {\em singular set} of the orbifold.

Two orbifolds are {\em isomorphic} if their underlying spaces are homomeomorphic by a homeomorphism that preserves the singular set and is compatible with the orbifold structure, i.e., the isotropy groups of identified singular points are isomorphic and the homeomorphism can be locally lifted on each chart in an equivariant way.

It is said that a $2$-dimensional orbifold $O$ admits a {\em hyperbolic structure} if there exists a properly discontinuous action of a group $\Gamma$ of isometries of the hyperbolic plane $\HH$ such that $O$ is isomorphic to the orbifold whose underlying space is $\HH/\Gamma$ and the structure group of each singular point is isomorphic to the isotropy group of the $\Gamma$ action on any of its representatives in $\HH$, it will be said that $O$ is a {\em hyperbolic orbifold}. This $\Gamma$ is also defined as the fundamental group of the orbifold.  Observe that a hyperbolic orbifold may admit different hyperbolic structures in the sense that the corresponding group actions does not need to be conjugated in $\PSL(2,\R)$.

Most of the $2$-dimensional closed orbifolds are hyperbolic. In our specific case where the orbifolds are oriented and the singular locus is given by cones the unique non-hyperbolic orbifolds are $S^2$, $S^2(n)$, $S^2(n,m)$, $S^2(n,2,2)$, $S^2(3,3,2)$, $S^2(5,3,2)$, $S^2(4,3,2)$, $S^2(4,4,2)$, $S^2(3,3,3)$, $S^2(6,3,2)$, $S^2(2,2,2,2)$ and $T^2$ (see e.g. \cite[Theorem 13.3.6]{Thurston}). Here the notation $M(n_1,\dots, n_k)$ refers to a closed surface $M$ (the underlying space) with $k$ cone points, each one with structure group $\Z_{n_i}$.

Let $\bm{O}$ denote the underlying space of an (oriented) orbifold $O$ and let $\sigma: S^1\to \bm{O}$ be a loop that does not meet any singular point. Let $O^{\Join}$ be the (noncompact) orbifold whose underlying space is $\bm{O}\setminus \bm{\sigma}$ (with the same singular points as $O$), where $\bm{\sigma}=\sigma(S^1)$.  Clearly $\bm{O}^{\Join}$ has two ends corresponding to each side of $\sigma$; if $\bm{O}^{\Join}$ is not connected, then its two noncompact connected components will be denoted as  $\bm{O}^\triangleright$ and $\bm{O}^\triangleleft$.

The following Proposition is a direct corollary of classical theory of Fuchsian groups (see e.g. \cite[Chapter 4]{Katok}).

\begin{prop}\label{p:hyperbolic structure}
Let $O$ be a $2$-dimensional orbifold whose singular set consists of a finite number of cone points and its underlying space admits a finite (nonzero) number of punctures. Then $O$ admits a hyperbolic structure and its fundamental group $\pi_1(O)$ is isomorphic to a free product of cyclic groups. Each generator with finite order is in correspondence with a cone point and its order is the same as the order of the corresponding structure group. Moreover, $O$ admits a hyperbolic structure where every end is a cusp (finite area) if and only if $\pi_1(O)$ is not virtually cyclic {\em (}$\Z,\Z_2\ast\Z_2$ or finite{\em )}.
\end{prop}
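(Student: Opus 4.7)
The plan is to combine a computation of $\pi_1(O)$ via the standard surface presentation with Klein's ping--pong construction, and then to characterise the finite-volume case by Gauss--Bonnet plus a short enumeration. Write $O$ as a closed oriented surface of genus $g$ with $n\ge 1$ punctures and $k$ cone points of orders $m_1,\dots,m_k$. The classical polygonal presentation of the orbifold fundamental group has generators $a_i,b_i$ ($1\le i\le g$), loops $c_j$ around each puncture ($1\le j\le n$) and loops $x_l$ around each cone point ($1\le l\le k$), subject to $x_l^{m_l}=1$ and the surface relation $\prod_i[a_i,b_i]\prod_j c_j\prod_l x_l=1$. Because $n\ge 1$, I can solve this relation for, say, $c_n$, eliminating it together with the relation itself, so that
$$\pi_1(O)\;\cong\; F_{2g+n-1}\,\ast\,\Z_{m_1}\ast\cdots\ast\Z_{m_k},$$
a free product of cyclic groups in which the torsion factors correspond bijectively to the cone points with the prescribed orders.

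For the existence of a hyperbolic structure I would invoke Klein's combination theorem: choose, for each infinite cyclic factor, a hyperbolic (or parabolic) element of $\PSL(2,\R)$, and for each $\Z_{m_l}$-factor an elliptic rotation of order $m_l$; then arrange their fixed points on $\partial\HH^2$ so that pairwise disjoint ping--pong disks separate them. The resulting subgroup $\Gamma\subset\PSL(2,\R)$ is discrete and isomorphic to $\pi_1(O)$, and $\HH^2/\Gamma$ is isometric to $O$ as an orbifold.

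For the cusps-at-every-end refinement I would first compute the orbifold Euler characteristic
$$\chi(O)=2-2g-n-\sum_{l=1}^k\!\Bigl(1-\tfrac{1}{m_l}\Bigr),$$
and recall that, by Gauss--Bonnet, a complete hyperbolic orbifold has finite area iff $\chi<0$. A short enumeration of triples $(g,n,k)$ with $n\ge 1$ and $\chi(O)\ge 0$ yields exactly $(0,1,0)$, $(0,1,1)$, $(0,2,0)$, and $(0,1,2)$ with $m_1=m_2=2$, producing $\pi_1(O)\in\{1,\Z_{m_1},\Z,\Z_2\ast\Z_2\}$: precisely the virtually cyclic free products of cyclic groups listed in the statement (any other such free product contains a rank-two free subgroup by ping--pong, hence cannot be virtually cyclic). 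Conversely, when $\chi(O)<0$, I would construct a finite-area structure by assembling an explicit fundamental polygon in $\HH^2$ whose side pairings send each puncture generator to a parabolic isometry (ideal vertex) and each cone generator to an elliptic rotation through $2\pi/m_l$ (finite vertex of that interior angle), and then apply Poincar\'e's polygon theorem. The main obstacle is this last simultaneous prescription---forcing parabolicity at every puncture while imposing the cone angles and still closing up the polygon---which is exactly where the strict negativity of $\chi(O)$ is used to provide enough angular freedom; this is the content of the classical realisation theorem for Fuchsian groups of signature $(g;n;m_1,\dots,m_k)$.
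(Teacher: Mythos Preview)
The paper does not actually prove this proposition: it is stated as ``a direct corollary of classical theory of Fuchsian groups'' with a reference to Katok's book, so your proposal is supplying an argument where the paper supplies none. Your overall strategy---compute $\pi_1(O)$ from the polygonal presentation, use the puncture to kill the long relator, then realise the group as Fuchsian and distinguish the finite-volume case by an Euler-characteristic enumeration---is exactly the classical route and is essentially correct.

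There is one genuine gap. In your ping--pong step you assert that once $\Gamma\subset\PSL(2,\R)$ is discrete and abstractly isomorphic to $\pi_1(O)$, the quotient $\HH^2/\Gamma$ is isometric to $O$ as an orbifold. This does not follow: different punctured orbifolds can have isomorphic fundamental groups (for instance a once-punctured torus and a thrice-punctured sphere both have $\pi_1\cong F_2$), and a Schottky-type realisation of $F_{2g+n-1}$ produces a planar surface with $2g+n$ funnel ends, not a genus-$g$ surface with $n$ ends. The abstract isomorphism type of $\Gamma$ does not determine the topological type of the quotient; one must control the signature, i.e.\ which generators are parabolic, which are hyperbolic paired as handle generators, and which are elliptic.

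The fix is already implicit in your final paragraph: the Poincar\'e polygon construction (equivalently, the classical existence theorem for Fuchsian groups of a given signature) produces a Fuchsian group whose quotient has the prescribed genus, number of cusps, and cone data, not merely the right abstract group. So you should invoke that construction for the general existence statement as well, not only for the finite-volume refinement; the ping--pong argument, as stated, proves only that $\pi_1(O)$ embeds in $\PSL(2,\R)$. The few virtually cyclic cases excluded by Gauss--Bonnet are then handled by the obvious direct constructions (trivial group, a single elliptic, a single hyperbolic or parabolic, and the infinite dihedral group generated by two order-two elliptics), and these complete the unconditional ``admits a hyperbolic structure'' claim.
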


\begin{rem}\label{r:minimal cusps}
The fundamental group of a hyperbolic orbifold with finite area acts on $S^1$ by projective diffeomorphisms. It is well known that these actions are minimal (see e.g. \cite[Theorems 4.5.1 and 4.5.2]{Katok}).
\end{rem}

\begin{defn}\label{d:typical orbifold}
It is said that a hyperbolic closed orbifold $O$ is {\em typical} if there exists a simple loop $\sg:S^1\to \bm{O}$ which does not meet its singular locus, $\bm{O}\setminus \bm{\sg}$ is not connected and each connected component admits a hyperbolic structure with finite area\footnote{ More precisely, the Dirichlet domain for the action of the fundamental group of the orbifold on $\HH$ has finite area.}. The loop $\sg$ is said to be a {\em separating} loop. 
\end{defn}

\begin{rem}
According to the classification of hyperbolic closed $2$-dimensional orbifolds (see e.g. \cite{Thurston}) and Proposition~\ref{p:hyperbolic structure} it follows that the unique closed hyperbolic orbifolds that are non-typical are those of the form $S^2(n,m,k)$, $S^2(2,2,2,n)$,  $S^2(2,2,2,2,2)$, $T^2(n)$ and $T^2(2,2)$ for any admissible values of $n,m,k\geq 2$.
\end{rem}

\begin{notation}\label{n:block data}Let $O$ be a typical orbifold and let $\sigma$ be a separating loop for its underlying space. Let $G^\triangleright$ and $G^\triangleleft$ be subgroups of $\PSL(2,\R)$ inducing a hyperbolic structure on $O^\triangleright$ and $O^\triangleleft$ of finite area. Let $B^\triangleright$ and $B^\triangleleft$ be the compact surfaces with boundary obtained from the underlying spaces $\bm{O}^\triangleright$ and $\bm{O}^\triangleleft$ by removing pairwise disjoint open and connected neighborhoods of cone points and ends. Assume that these neighborhoods are bounded by circles.

In order to relax notations, we shall use the symbol $\star$ on super and subindices  to refer simultaneusly to $\triangleright$ and $\triangleleft$. Observe that $B^\star$ is canonically oriented and its orientation is induced by an orientation in $O$. Let $\alpha_{1\star},\dots,\alpha_{k\star}$ be parametrizations of the boundary components of $B^\star$ associated to cone points of $O^\star$ and let $\beta_\star$ be a parametrization of the boundary component associated to the end of $O^\star$. Choose the parametrizations following the orientation induced by $B^\star$ in its boundary. This implies that the orientation of $\beta_\triangleright$ is opposed to that of $\beta_\triangleleft$.
\end{notation}

Let $[\alpha]$ denote the homotopy class of $\alpha$ relative to some distinguished point in $B^\star$\footnote{This requires the use of a path $\gamma$ from the distinguished point to $\alpha$; thus $[\alpha]$ refers to the homotopy class of the junction of $\gamma$, $\alpha$ and $\gamma^{-1}$, this homotopy class depends on the choice of $\gamma$ but its conjugacy class does not. We make this abuse of notation for the sake of readability.} Observe that $\pi_1(B^\star)$ is a free group and it admits a system of generators containing $[\alpha_{1\star}],\dots,[\alpha_{k\star}]$. There exists a surjective homomorphism $h^\star:\pi_1(B^\star)\to \pi_1(O^\star)\equiv G^\star$ and $\Ker(h^\star)$ is the normal closure of $\langle\{[\alpha_{1\star}]^{n_1},\dots,[\alpha_{k\star}]^{n_k}\}\rangle$, where $n_i$ is the order of $h^\star([\alpha_{i\star}])$.  Observe that $G^\star$ is a free product of cyclic groups and $h^\star([\alpha_{i\star}])$ is defined as an elliptic generator of $G^\star$ associated to the cone point surrounded by $\alpha_{i\star}$. The other generators can be mapped in a natural way with the generators of $G^\star$ of infinite order. The boundary component associated to the cusp, the trace of $\beta_\star$, is clearly homotopically equivalent to a suitable word on the generators given above (since it can be retracted to a $1$-complex formed by a wedge of loops homotopic to them), this is, in fact, the classical presentation of a Fuchsian group. Recall that $h^\star(\pi_1(B^\star))$ is isomorphic to $\pi_1(O^\star)$ and it is naturally included in $\PSL(2,\R)$ and hence in $\Diff_+^\omega(S^1)$.

\begin{defn}\label{d:foliated block}
Let $\FF(G^\star,B^\star,h^\star)$ denote the suspension of the previously defined homomorphism $h^\star$.
\end{defn}

From Remark~\ref{r:minimal cusps} it follows that $\FF(G^\star,B^\star,h^\star)$ is a minimal foliation on a compact $3$-manifold with boundary. The boundary of this foliation consists of finitely many tori that are in correspondence with the boundary components of $B^\star$. The trace foliation in each of these tori is given by the suspension of the circle diffeomorphisms $h^\star(\alpha_{i\star})$'s and $h^\star(\beta_\star)$.  This allows to classify the boundary components of $\FF(G^\star,B^\star,h^\star)$ as {\em elliptic} or {\em parabolic} accordingly with its trace foliation.

\begin{prop}\label{p:always parabolic}
For every $i\in\{1,\dots k\}$, the diffeomorphism $h^\star([\alpha_{i\star}])$ is elliptic. The diffeomorphisms $h^\star([\beta_\star])$ are always parabolic. Without loss of generality we can also assume that $h^\triangleright([\beta_\triangleright])$ and $h^\triangleleft([\beta_\triangleleft])^{-1}$ belong to the same conjugacy class in $\PSL(2,\R)$.
\end{prop}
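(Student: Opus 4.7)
The plan is to handle the three assertions in sequence: the first two follow from standard Fuchsian group theory, while the third requires a short orientation argument together with the classification of parabolic conjugacy classes in $\PSL(2,\R)$.

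For the elliptic assertion, I would simply unwind the construction of $h^\star$. By Proposition~\ref{p:hyperbolic structure}, $G^\star\cong\pi_1(O^\star)$ decomposes as a free product of cyclic groups whose finite-order factors are in bijection with the cone points of $O^\star$. The loop $\alpha_i$ bounds a disk neighborhood of the $i$-th cone point in $B^\star$, and by construction $h^\star$ sends it to a generator of the corresponding $\Z_{n_i}$-factor; in the Fuchsian realization $G^\star\subset\PSL(2,\R)$, that generator is an elliptic rotation of order $n_i$ fixing a lift of the cone point, so $h^\star(\alpha_i)$ is elliptic. For the parabolic assertion I would invoke the classical fact (see e.g.\ \cite[Chapter 4]{Katok}) that for a Fuchsian group of cofinite volume every end of $\HH/G^\star$ is a cusp and corresponds to a maximal parabolic cyclic subgroup of $G^\star$. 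Since $\beta_\star$ parametrizes the horocyclic boundary of a cusp-neighborhood in $B^\star$, the element $h^\star(\beta_\star)$ generates the associated parabolic stabilizer and is therefore parabolic.

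For the conjugacy claim, the key input is that $\PSL(2,\R)$ has exactly two conjugacy classes of nontrivial parabolic elements, distinguished by the direction of translation along the invariant horocycle foliation, and that these two classes are swapped by taking inverses. Thus for the two parabolics $h(\beta_\triangleright)$ and $h(\beta_\triangleleft)$, either they are conjugate to one another or $h(\beta_\triangleright)$ is conjugate to $h(\beta_\triangleleft)^{-1}$. The orientation observation that selects the second alternative is this: when $O$ is recovered by gluing $O^\triangleright$ and $O^\triangleleft$ along $\sigma^\ast$, the two boundary circles identified with $\sigma^\ast$ inherit opposite orientations from the ambient orientation of $O$, so $\beta_\triangleright$ and $\beta_\triangleleft$, each taken in accordance with the boundary orientation of $B^\star$, run in opposite rotational senses around $\sigma$; under $h$ this translates to opposite parabolic conjugacy classes. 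If a given choice of generators initially fails to realize this alignment, one simply replaces $\beta_\triangleleft$ by its inverse, which is the content of the \emph{without loss of generality} clause.

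The main obstacle, as I see it, is precisely the orientation bookkeeping in the third step: translating the topological statement \emph{boundary orientations on the two sides of a separating curve are opposite} into the algebraic statement \emph{the induced parabolic generators lie in opposite $\PSL(2,\R)$-conjugacy classes}. This requires carefully tracking how boundary orientations on $B^\star$ are transported, via the chosen hyperbolic structure with cusps, into the direction of horocyclic translation of the corresponding element of $G^\star$; everything else reduces to standard facts about $\PSL(2,\R)$ and Fuchsian groups.
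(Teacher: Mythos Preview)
Your treatment of the first two assertions matches the paper's, which dispatches them in one line each: finite-order elements of $\PSL(2,\R)$ are elliptic, and the $\beta_\star$ correspond to cusps and hence to parabolics.

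For the third assertion your route diverges from the paper's. The paper does \emph{not} attempt any orientation bookkeeping: it simply observes that nontrivial parabolics in $\PSL(2,\R)$ fall into exactly two conjugacy classes, so $h(\beta_\triangleright)$ is automatically conjugate either to $h(\beta_\triangleleft)$ or to $h(\beta_\triangleleft)^{-1}$, and if the undesired case occurs one replaces the action of $G^\triangleleft$ by its inverse action (equivalently, conjugates the Fuchsian group by an orientation-reversing isometry of $\HH$), which swaps the parabolic class. Your argument tries to show that the desired case is forced by the opposite boundary orientations of $\beta_\triangleright$ and $\beta_\triangleleft$; this is plausible but tacitly assumes that the two independently chosen hyperbolic structures on $O^\triangleright$ and $O^\triangleleft$ are both orientation-compatible with the orientation inherited from $O$, an assumption the paper never makes and never needs. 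Your fallback (relabel $\beta_\triangleleft$ as $\beta_\triangleleft^{-1}$) is also a different move from the paper's (change the Fuchsian realization), and mildly conflicts with the convention set just before the proposition that the $\beta_\star$ carry the induced boundary orientation. None of this is wrong, but the orientation paragraph is extra work that the paper's two-line argument sidesteps entirely; the obstacle you flag as ``the main obstacle'' simply does not arise in the paper's approach.
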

\begin{proof}
The first affirmation is trivial, finite order projective diffeomorphisms are always elliptic. The maps $h^\star(\beta_\star)$ are parabolic by construction since they fix just one point in the circle (corresponding to the cusp). It follows that $h^\triangleright([\beta_\triangleright])$ is conjugated to $h^\triangleleft([\beta_\triangleleft])$ or its inverse, since, in $\PSL(2,\R)$ parabolic maps have just two conjugacy classes. If the definition of $h^\triangleleft$ on the generators (of the previous presentation of $\pi_1(B^\triangleleft)$ as a free group) is replaced by their inverses, then the conjugacy class of $h^\triangleleft([\beta_\triangleleft])$ is changed. Thus, $h^\star$ can be chosen to guarantee that $h^\triangleright([\beta_\triangleright])$ and $h^\triangleleft([\beta_\triangleleft])^{-1}$ belong to the same conjugacy class in $\PSL(2,\R)$ as desired.
\end{proof}

\begin{rem}[Dehn fillings]\label{r:Dehn filling}
Let $T_i$ be the elliptic boundary torus of $\FF(G^\star,B^\star,h^\star)$ whose trace foliation is the suspension of the elliptic element $h^\star([\alpha_{i\star}])$ of order $n_i$. The trace foliation is conjugated to a Kronecker flow associated to a rotation with rotation number $\rot(h^\star([\alpha_{i\star}]))$. Let $\FF_\times$ be the profuct foliation in $D^2\times S^1$. It follows that the boundary elliptic torus $T_i$ can be glued with the boundary of $\FF_\times$ via a suitable Dehn map. Since the trace foliations are preserved by this map, the resulting foliation $\FF(G^\star,B^\star,h^\star)\cup_\partial \FF_\times$ has one elliptic boundary torus less than the former $\FF(G^\star,B^\star,h^\star)$. This process is called {\em Dehn's filling}.
\end{rem}

\begin{defn}\label{d:foliated block Dehn}
Let $\overline{\FF}(G^\star,B^\star,h^\star)$ be the foliation obtained by performing Dehn's fillings on each elliptic boundary torus of $\FF(G^\star,B^\star,h^\star)$. These foliations will be called {\em foliated (projective) blocks}.
\end{defn}

\begin{rem}
Recall that $\FF(G^\star,B^\star, h^\star)$ has a natural structure of Seifert fibration given by the vertical fibers of $\widetilde{B^\star}\times S^1$. The Dehn filling performed on the elliptic torus $T_i$ maps the fibers, which form a product fibration on $T_i$, with the fibers at the boundary of a Seifert fibration on the solid torus $\FF_\times$ of type $\rot(h^\star([\alpha_{i\star}]))$. Of course, the trace leaves in $T_i$ are mapped to the boundary of the leaves of the product foliation on $\FF_\times$. 

It follows that the ambient manifold of $\overline{\FF}(G^\star,B^\star,h^\star)$ is also a Seifert manifold with boundary. 
\end{rem}

\subsection{Topology of leaves of suspensions}\label{ss:suspensions}\label{ss:suspension}
Just for completeness we recall the relation between the holonomy and fundamental groups of a leaf of a suspension foliation.

Let $h:\pi_1(M,x)\to \Homeo(T)$ be a representation of the fundamental group of a manifold $M$, with a distinguished point $x$, into the group of homeomorphisms of a topological space $T$. The suspension foliation $\FF_h$ is obtained as the quotient of $\widetilde{M}\times T$ (foliated  by $\widetilde{M}\times\{\cdot\}$), by the diagonal action $\gamma\cdot(z,t) = (z\cdot \gamma,h(\gamma)^{-1}(t))$ (here $z\cdot\gamma$ denotes the right action of $\gamma$ as deck transformation). Given $z\in\widetilde{M}$ and $t\in T$, let $[(z,t)]$ be the equivalence class of $(z,t)$ by the diagonal action and let $L_{(z,t)}$ denote the leaf of $\FF_h$ that meets $[(z,t)]$.

\begin{defn}\label{d:stabilizer_group}
Let $\rho:\Gamma\to\Homeo(T)$ be a homomorphism from a group $\Gamma$ to the group of homeomorphisms of a topological space $T$. Let $t\in T$ and let us define the group of stabilizers of $\rho$ at $t$ as $\Stab_\rho(t)=\{g\in \Gamma\mid\ \rho(g)^{-1}(t)=t\}$.
\end{defn}

It is well known that $L_{(x,t)}$ is a covering space of $M$ and its fundamental group is isomorphic to $\{\gamma\in\pi_1(M,x)\mid\ h(\gamma)^{-1}(t)=t\}=\Stab_h(t)$. \\

Let $\Hol(L_{(x,t)})$ be the holonomy group of the leaf $L_{(x,t)}$, which is a group of germs of homeomorphisms of $T$ at $t$ and let $\hol:\Stab_h(t)\subset\pi_1(M,x)\to \Hol(L_{(x,t)})$ be the holonomy representation of the fundamental group of $L_{(x,t)}$, $\hol(\gamma)$ is just the germ of $h(\gamma)$ at $t$ . Note that $\hol(\gamma)$ can be trivial (a germ of the identity at $t$) even when $h(\gamma)$ is not the identity. More precisely, there exists a surjective morphism $p_{t}:h(\Stab_h(t))\to\Hol(L_{(x,t)})$.

\begin{defn}
A morphism $h:\pi_1(M,x)\to\Homeo(T)$ is called {\em locally faithful} if the maps $p_{t}:h(\Stab_h(t))\to\Hol(L_{(x,t)})$ are isomorphisms for all $t\in T$. The corresponding associated action of $\pi_1(M,x)$ on $T$ is also called {\em locally faithful}.
\end{defn}

The simplest example of locally faithful actions are those given by faithful actions of analytic diffeomorphisms. Observe that even when $h$ is not a monomorphism, the group $h(\pi_1(M,x))$ is a subgroup of $\Homeo(T)$ and therefore $h(\pi_1(M,x))$ acts in a faithful way on $T$. The previous discussion is summarized in the following Lemma:

\begin{lemma}\label{l:leaf topology suspension}
Let $h:\pi_1(M,x)\to\Homeo(T)$ be a homomorphism. Then the fundamental group of a leaf $L_{(x,t)}$ is isomorphic to $\Stab_h(t)$. If $h$ is locally faithful, then the holonomy group of $L_{(x,t)}$ is isomorphic to $h(\Stab_h(t))$. If, moreover, $h$ is a monomorphism then $\Stab_h(t)$ and $\Hol(L_{(x,t)})$ will be isomorphic.
\end{lemma}

\subsection{Foliated projective blocks}
From the Definitions~\ref{d:foliated block} and \ref{d:foliated block Dehn}, given a typical closed orbifold $O$ and separating loop $\sg$, we get a pair of minimal foliated projective blocks $\overline{\FF}(G^\star,B^\star,h^\star)$ with transverse boundary consisting of parabolic tori. We shall always assume that the separating loop $\sg$ and the hyperbolic structures of finite area in $O^\triangleright$ and $O^{\triangleleft}$ have been chosen and fixed. The associated foliated projective blocks over $B^\triangleright$ and $B^\triangleleft$ will be denoted by $\FF^\triangleright$ and $\FF^\triangleleft$, respectively.

Since the foliated projective blocks are minimal, every leaf is noncompact and has free fundamental group. It is well known that the stabilizer group of a projective discrete group at any point is always solvable, it follows, from Lemma~\ref{l:leaf topology suspension}, that the holonomy group of each leaf must be infinite cyclic or trivial. This is a particular version of the so called Hector's Lemma (see e.g. \cite[Proposition 3.7]{Matsuda}) which says that the stabilizer of any point in a minimal locally discrete analytic group action on the circle must be trivial or infinite cyclic. If moreover $h$ is a monomorphism, it follows that the fundamental group is also trivial or infinite cyclic.

\begin{prop}\label{p:3 topologies}
Let $\overline{\FF}(G^\star,B^\star,h^\star)$ be a foliated projective block, then
\begin{enumerate}
\setcounter{enumi}{-1}
\item All but countably many leaves are simply connected, all of them are homeomorphic to an oriented surface with infinitely many non-compact boundary components and a Cantor set of ends.\label{Type 0}

\item There exists countably many leaves homeomorphic to an oriented surface with infinitely many noncompact boundary components, cyclic fundamental group and a Cantor set of ends.\label{Type 1}

\item There exists exactly one leaf homeomorphic to an oriented surface with infinitely many boundary components where only one of these components is a circle, with cyclic fundamental group and a Cantor set of ends. \label{Type 2}
\end{enumerate}

\end{prop}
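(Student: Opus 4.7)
The plan is to identify every leaf of $\FF^\star=\overline{\FF}(G^\star,B^\star,h^\star)$ with a (manifold) orbifold cover of the truncated base orbifold $\bar O^\star$, where $\bar O^\star$ denotes the closure of $O^\star$ with a small open horoball neighborhood of the cusp removed. The Dehn fillings of Remark~\ref{r:Dehn filling} are performed precisely to kill, inside the fundamental group of each leaf, the loops $\alpha_i^{n_i}$ that normally generate $\Ker h^\star$; after filling, the leaf through $[y,x]$ therefore becomes the orbifold cover of $\bar O^\star$ associated with the subgroup $\Stab_{G^\star}(x)\le G^\star=\pi_1(\bar O^\star)$, and its fundamental group coincides with this stabilizer.

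I would then classify the stabilizers using Hector's Lemma, as recalled before the statement: each $\Stab_{G^\star}(x)$ is trivial or infinite cyclic, and in the cyclic case its generator is either hyperbolic or parabolic (the only elements of the Fuchsian group $G^\star$ with fixed points on $\partial\HH\cong S^1$). By construction $O^\star$ has a single cusp, so all parabolic elements of $G^\star$ are conjugate to powers of a single generator, producing a unique $G^\star$-orbit of parabolic fixed points; this gives the unique leaf in item (\ref{Type 2}). The countably many conjugacy classes of maximal cyclic hyperbolic subgroups provide the countably many leaves in (\ref{Type 1}), and all remaining orbits yield the generic leaves of (\ref{Type 0}).

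Next I would read off the boundary structure of each leaf using the explicit model
\[
\widetilde{\bar O^\star}=\HH\setminus\bigcup_{\gamma\in G^\star/\langle p\rangle}\gamma\bar H,
\]
where $p$ generates a cusp stabilizer and $\bar H$ is a horoball based at its parabolic fixed point, so that $\widetilde{\bar O^\star}$ is a simply connected planar surface whose boundary consists of countably many horocyclic lines. The boundary components of the leaf $L_x$ are indexed by the double cosets $\Stab_{G^\star}(x)\backslash G^\star/\langle p\rangle$, and a short computation, using that hyperbolic and parabolic elements of a Fuchsian group are never conjugate and that $\langle p\rangle$ is self-normalizing in $G^\star$, shows that in (\ref{Type 0}) and (\ref{Type 1}) every boundary component is a horocyclic line, while in (\ref{Type 2}) exactly one double coset (the one containing an element $g$ with $gpg^{-1}\in\Stab_{G^\star}(x)$) produces a compact circle and all others produce lines. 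Since $G^\star$ is countable and not cyclic, each of the three double-coset sets is countably infinite, which gives the claimed infinitude of boundary components.

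The hard part will be verifying that the end space of each leaf is a Cantor set. For (\ref{Type 0}) this follows because $G^\star$ is virtually free and non-elementary by Proposition~\ref{p:hyperbolic structure}, so $\widetilde{\bar O^\star}$ is quasi-isometric to a locally finite non-degenerate tree whose end space is Cantor. For (\ref{Type 1}) and (\ref{Type 2}) the quotient by an infinite cyclic subgroup only identifies the at most two ends fixed by its generator, which preserves the Cantor structure on the remaining cofinal family of ends. Granted this, the Ker\'ekj\'art\'o classification of noncompact oriented surfaces ensures that within each of the three cases the homeomorphism type is uniquely determined by the listed invariants (fundamental group, end space, and set of boundary components), yielding the stated uniform models.
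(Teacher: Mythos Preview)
Your approach is correct and follows the same underlying strategy as the paper: classify leaves by their stabilizers $\Stab_{G^\star}(x)$, which by Hector's Lemma are trivial, cyclic hyperbolic, or cyclic parabolic, yielding the three types. The paper argues more concretely with the suspension $\FF(G^\star,B^\star,h^\star)$, treating the cone-free and cone-point cases separately and showing that Dehn fillings merely cap the elliptic boundary circles; your orbifold-cover viewpoint and the double-coset description $\Stab_{G^\star}(x)\backslash G^\star/\langle p\rangle$ for boundary components is a cleaner and more systematic packaging of the same content, and in particular gives a tidier reason why type~(\ref{Type 2}) has exactly one compact boundary circle (self-normalizing property of $\langle p\rangle$).

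One point to tighten: your justification that types~(\ref{Type 1}) and~(\ref{Type 2}) retain a Cantor end space (``the quotient by an infinite cyclic subgroup only identifies the at most two ends fixed by its generator'') is not literally correct. The two axis endpoints $h^{\pm}$ do not get identified; they disappear, since a ray along the axis projects to a non-proper curve on the quotient. The end space of the quotient is the space of $\langle h\rangle$-orbits on $\partial T\setminus\{h^{\pm}\}$, which one checks is still a Cantor set because the quotient graph is a compact circle with finitely many non-elementary subtrees attached. The paper's own argument here (``cyclically covered by the universal cover of $B$, so they still have a Cantor set of ends'') is equally informal, so this is a matter of precision rather than a gap. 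Finally, for the concluding uniqueness of homeomorphism type you need the classification of noncompact surfaces \emph{with boundary} (Prishlyak--Mischenko, the paper's Theorem~\ref{t:class open boundary}), not just Ker\'ekj\'art\'o.
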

\begin{proof}
Set $\FF^\star=\overline{\FF}(G^\star,B^\star,h^\star)$ and consider first the suspension $\FF(G^\star,B^\star,h^\star)$. Since the orbifold is typical it follows that $G^\star$ is not virtually cyclic. Therefore $G^\star$ has a Cantor set of ends. Observe also that the leaves are oriented since $G^\star$ preserves orientation (it is a projective action).

Assume first that the base orbifold is a surface, i.e., there are no cone points and therefore no elliptic boundary components. In this case $G^\star$ is a free group and the action  defined by $h^\star$ is faithful.  Since the action is also analytic, it follows that all but a countable set of leaves of $\FF(G^\star,B^\star,h^\star)$ are homeomorphic to the universal covering of $B^\star$, so they are simply connected,  with no compact boundary components and have a Cantor set of ends. Since the foliation is minimal, the boundary of simply connected leaves must meet the (parabolic) boundary torus in a dense family of infinitely many connected components that are orbits of the respective trace foliations  associated to $h^\star([\beta_\star])$. This is the case \ref{Type 0} of the proof.

The leaves which are not simply connected are those corresponding to those points which are fixed by some element of $G^\star$. Let $x\in S^1$ be a fixed point of some $g\in G^\star\neq{\id}$. Since the holonomy of every leaf is cyclic, it follows that $\Hol(L_x,x)=\langle g\rangle$ and $\pi_1(L_x,x)\approx \Z$, where $L_x$ is the leaf containing $x$ (here $g$ defines just a germ in a transverse neighborhood of $x$, but there is a unique analytic continuation). Observe also that $f(x)$ is fixed by $fgf^{-1}$ for all $f\in G^\star$, and that point belongs to the $G^\star$-orbit of $x$. Since the action is faithful, $G^\star$ does not contain elliptic elements, thus $fgf^{-1}$ is parabolic or hyperbolic for all $f\in G^\star$, as a consequence it has at most two fixed points. It follows that there exists a correpondence at most $2$ to $1$ between non simply connected leaves of the suspension and conjugacy classes of $G^\star$. Since the conjugacy classes of a free group are infinite it follows that there are infinite, but countably many, leaves which are not simply connected. These leaves are cyclically covered by the universal cover of $B^\star$, so they still have a Cantor set of ends. 

A non simply connected leaf must lie within these two types:
\begin{enumerate}
\item Its boundary consists of noncompact orbits of the trace foliation of the parabolic torus. There are countably many leaves with this property.

\item Its boundary contains the closed orbit of the  trace foliation of the parabolic torus, this is the unique compact boundary component of the leaf and any parametrization of this component generates the fundamental group of the leaf.
\end{enumerate}

In any case, these leaves are dense, so they must have infinitely many noncompact boundary components corresponding to noncompact orbits of the trace foliation of the parabolic torus. The endset of the leaves in the previous cases still are Cantor sets since they are cyclically  covered by $\widetilde{B^\star}$. This completes the remaining cases \ref{Type 1} and \ref{Type 2}.

If the orbifold has cone points, then the foliation $\FF(G^\star,B^\star,h^\star)$ has elliptic boundary tori. The kernel of the action is, by the definition of $h^\star$, the normal closure of $[\alpha_{1\star}^{n_1}],\dots,[\alpha_{k\star}^{n_k}]$ , where each $\alpha_{i\star}$ represents a boundary component of $B^\star$ which bounds a cone point of order $n_i$.

The existence of a nontrivial kernel implies nontrivial topology in every leaf. In this case, the topology appears in the form of infinitely many compact boundary components on each leaf, they correspond with the (closed) orbits of the elliptic tori. Since $\Ker(h^\star)$ is the minimal normal subgroup generated by these elements, no more topology coming from elements in $\Ker(h^\star)$ can appear.

Observe that none of these components can generate nontrivial holonomy since they are associated to elliptic elements. Thus, in any case, the holonomy of every leaf is still trivial or infinite cyclic. Finally, observe that after performing Dehn fillings on the elliptic tori, all the compact boundary components with trivial holonomy on every leaf are filled with a disk and we recover the three topologies described above for leaves in $\FF^\star$.
\end{proof}

The topologies described in Proposition~\ref{p:3 topologies} determines exactly three different noncompact oriented surfaces which are pictured in Figure~\ref{f:3 topologies}.

\begin{figure}
\centering
\includegraphics[scale=0.33,frame]{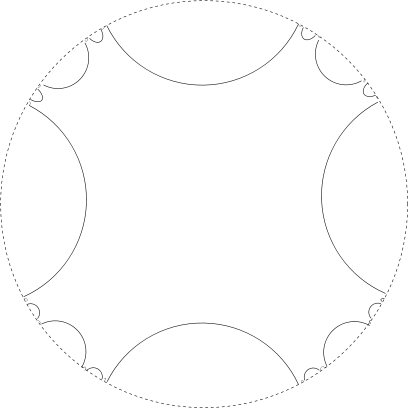}
\includegraphics[scale=0.33,frame]{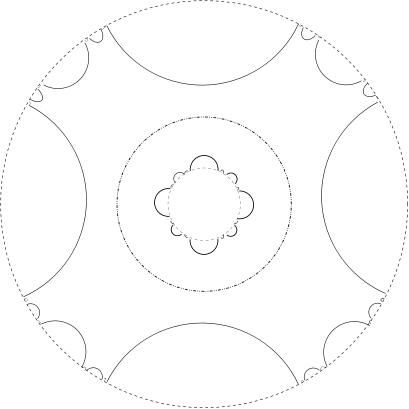}
\includegraphics[scale=0.33,frame]{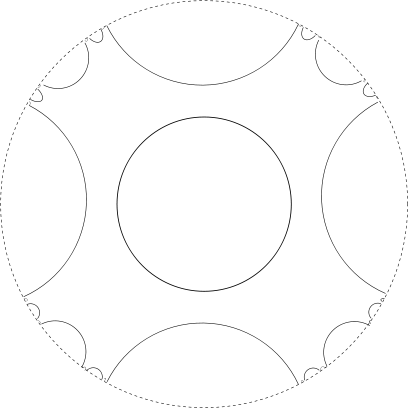}
\caption{Leaves of type $0$, $1$ and $2$ embedded in the hyperbolic plane. Dashed great circles represent the circle at infinity. On the leaf of type $1$, the middle dashed circle represents the homotopy generator. Bold lines are boundary components.}\label{f:3 topologies}
\end{figure}

\begin{defn}\label{d:types of leaves}
As suggested in Proposition~\ref{p:3 topologies}, the simply connected leaves of a foliated block $\FF^\star$ will be called {\em leaves of type} $0$. Leaves with infinite cyclic fundamental group and no compact boundary leaves will be called {\em leaves of type} $1$, the remaining leaf with a compact boundary component will be called {\em leaf of type} $2$. Let us define $\cL^{i}_{\star}$ as the set of leaves in $\FF^\star$ of type $i$, $i\in\{0,1,2\}$. 
\end{defn}

Let $L\in\cL^1_\star$ and let $\tau$ be a loop in $L$ which generates 
its fundamental group; let $\bm{\tau}$ be the trace of that loop. We can assume without loss of generality that $\bm{\tau}$ does not meet $\partial L$. Observe that $L\setminus \bm{\tau}$ has two noncompact connected components. Let us endow each leaf of a projective block $\FF^\star$ with a canonical orientation induced from that of the base orbifold. Relative to this orientation,  each connected component of $L\setminus\bm{\tau}$ induces an orientation in $\bm\tau$.

\begin{defn}\label{d:out in}A connected component of
 $L\setminus\bm{\tau}$ is called  {\em inner} if the orientation induced in $\bm{\tau}$ agrees with the orientation of $\tau$ and it will be called {\em outer} otherwise. The family of the boundary connected components lying in the outer (resp. inner) component of $L\setminus\bm{\tau}$ will be denoted by $C^+_L$  (resp. $C^-_L$). A boundary component of $L$ in $C^+_L$ (resp. $C^-_L$) will be also called {\em outer} (resp. {\em inner}) boundary component.
\end{defn}

The definition of $C^+_L$ or $C^-_L$  does only depend on the orientation of $\tau$ (since any other choice has a trace $\Z_2$-homologous with $\bm{\tau}$).  It will be assumed that an orientation for a separating loop is chosen for every leaf in $\cL^1_\star$.

\begin{lemma}\label{l:dense out inn}
Let $L$ be a leaf of type $1$ of a foliated projective block $\FF^\star=\overline{\FF}(G^\star,B^\star,h^\star)$. Then both $\bigcup_{B\in\CC_L^{+}}B$ and $\bigcup_{B\in\CC_L^-}B$ are dense in $\partial \FF^\star$.
\end{lemma}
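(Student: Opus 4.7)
The plan is to identify $L$ with a $G^\star$-orbit on the circle at infinity $S^1$, recognize the outer/inner dichotomy of its boundary components as a side-of-axis dichotomy on $S^1$, and then deduce density from the minimality of the diagonal $G^\star$-action on pairs of distinct points of $S^1$.

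First, by the proof of Proposition~\ref{p:3 topologies}, $L$ corresponds to the $G^\star$-orbit of a point $p\in S^1$ whose stabilizer is generated by $g=h^\star(\tau)\in G^\star$. Since $L\in\cL^1$, $g$ is not conjugate to $h^\star(\beta_\star)$ (otherwise $L\in\cL^2$) and is not of finite order (elliptic elements of $\PSL(2,\R)$ have no fixed point on $S^1$), so $g$ is hyperbolic with two fixed points $p,p'\in S^1$ and translation axis $\gamma\subset\HH$. Denote by $\HH^\pm$ the two open half-planes of $\HH\setminus\gamma$ and by $A^\pm\subset S^1\setminus\{p,p'\}$ their ideal boundary arcs.

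Each boundary component of $L$ pulls back to a $\langle g\rangle$-orbit of horocycles in $\tilde B^\star\subset\HH$, each based at some $f(p_\beta)\in G^\star\cdot p_\beta$, where $p_\beta$ is the fixed point of the parabolic $h^\star(\beta_\star)$. Because $\tau^\ast$ lies in the interior of $L$, the axis $\gamma$ is disjoint from every lift of $\partial B^\star$, so each horocycle lies entirely in $\HH^+$ or $\HH^-$. Projecting to $L=\tilde B^\star/\langle g\rangle$ identifies the outer/inner classification of Remark~\ref{r:out in} with the arc-side dichotomy at infinity: the boundary component with base $f(p_\beta)$ belongs to $\CC^+_L$ iff $f(p_\beta)\in A^+$, with the convention fixed by the orientation of $L$.

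The torus $\partial\FF^\star$ is the suspension of $h^\star(\beta_\star)$, so any open $U\subset\partial\FF^\star$ contains a product $I\times V$ with $V\subset S^1$ a nonempty open arc; since $p_\beta\notin\OO_p$ (because $L\in\cL^1$), the trace leaf associated to $y_0\in\OO_p$ meets $I\times V$ as soon as $y_0\in V$. Hence producing outer (resp.\ inner) boundary components in $U$ reduces to finding $f\in G^\star$ with the corresponding $y_0\in V$ and $f(p_\beta)\in A^+$ (resp.\ $A^-$). Because $O^\star$ has finite hyperbolic volume, $G^\star$ is a non-elementary lattice in $\PSL(2,\R)$, and the standard fact that its diagonal action on $(S^1\times S^1)\setminus\Delta$ is minimal (via topological transitivity of the geodesic flow on the unit tangent bundle of $\HH/G^\star$, equivalently Hedlund's theorem) yields density of the orbit $G^\star\cdot(p,p')$ in $(S^1\times S^1)\setminus\Delta$. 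For any two disjoint open arcs $V_1,V_2\subset S^1$ there is therefore $f\in G^\star$ with $f(p)\in V_1$ and $f(p')\in V_2$; choosing $V_1=V$ and $V_2$ on the appropriate side of $p_\beta$ relative to $V$ produces outer and inner boundary components through $U$, which proves the lemma. The main obstacle I expect is making the geometric identification in the third paragraph fully rigorous---namely that the disjointness $\gamma\cap\partial\tilde B^\star=\emptyset$ yields exactly the outer/inner dichotomy after passing to the quotient---after which the density is a routine application of minimality on pairs.
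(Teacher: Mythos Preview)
Your geometric identification in the third paragraph---outer/inner boundary components of $L$ correspond to the two ideal arcs $A^{\pm}\subset S^1\setminus\{p,p'\}$ cut out by the axis of the hyperbolic generator $g$---is correct and is not where the argument breaks. The gap is in the final density step: the diagonal action of a lattice $G^\star\subset\PSL(2,\R)$ on $(S^1\times S^1)\setminus\Delta$ is \emph{not} minimal. Under the identification $(S^1\times S^1)\setminus\Delta\cong\PSL(2,\R)/A$ (with $A$ the diagonal subgroup), the $G^\star$-orbit of a pair $(x,y)$ corresponds to the geodesic-flow orbit in $T^1(\HH/G^\star)$ of the geodesic with ideal endpoints $x,y$; so $G^\star\!\cdot(x,y)$ is dense exactly when that geodesic is dense in the unit tangent bundle. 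For your pair $(p,p')$ the projected geodesic is \emph{closed} (it is the axis of $g$), and a closed orbit of the geodesic flow is never dense. Thus $G^\star\!\cdot(p,p')$ has one-dimensional closure in $(S^1)^2\setminus\Delta$, and the step ``choose $V_2$ on the appropriate side of $p_\beta$'' cannot be completed. Topological transitivity of the geodesic flow only furnishes \emph{some} dense orbit, and Hedlund's theorem is about the horocycle flow, so neither justifies minimality here.

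The paper's proof sidesteps this obstruction by a coarse combinatorial argument rather than hyperbolic geometry: it decomposes $L_x$ into fundamental domains $F_y$ indexed by the Schreier graph of the orbit $G^\star x$, observes that all but finitely many domains lie entirely on one side of $\tau^\ast$, extracts by compactness a sequence $y_n\to z$ with $F_{y_n}$ outer (resp.\ inner) and $d_{\mathcal S}(y_n,x)\to\infty$, and then uses the triangle inequality in the Schreier graph to show that every $F_{g(y_n)}$ with $\|g\|\leq d_{\mathcal S}(y_n,x)-R$ is also outer (resp.\ inner). Minimality of the single leaf $L_z$ then transfers density to the boundary torus. Your geometric route could plausibly be repaired---for instance by showing directly that among the lifts of the closed geodesic with one endpoint in $V$, the cusp $p_\beta$ appears on both sides---but this needs a genuine argument about the lifted lamination, not diagonal minimality.
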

\begin{proof}
Let $E^+$ (resp. $E^-$) be the connected component of $L\setminus\bm{\tau}$ that contains $C_L^+$ (resp. $C_L^-$). Let $\{K^+_n\}_{n\in\N}$ be a family of compact subsets of $E^+$ such that $K^+_n\subset K^+_{n+1}$ for all $n\in\N$ and $\bigcup_n K^+_n = E^+_n$. Define the {\em limit set} of $L$ in the ends of $E^+$ as $$\lim_{E^+} L = \bigcap_{n\in\N} \overline{E^+\setminus K^+_n}\;.$$ It is clear that $\lim_{E^+} L$ is a compact and saturated set (i.e., a union of leaves)  of $\FF^\star$ and therefore it contains a minimal set. Since the foliation is minimal it follows that $\lim_{E^+} L $ is the whole ambient manifold and therefore $E^+$ is a dense subset. Recall that $\partial\FF^\star$ is the parabolic torus and the outer boundary components of $L$ are the connected components of $E^+\cap \partial\FF^\star$. By construction, any sufficiently thin neighborhood of $E^+\cap \partial\FF^\ast$ in $L$ is also contained in $E^+$, thus if $x\in E^+$ is sufficiently close to $\partial\FF^\ast$, then there exists $y\in E^+\cap\partial\FF^\ast$ which is also close to $x$. Hence, by means of the global density of $E^+$, it follows that $E^+\cap \partial\FF^\ast$ is dense in $\partial\FF^\ast$. An analogous argument shows that $E^-\cap\partial\FF^\star$ is dense in $\partial\FF^\star$ completing the proof. 

\end{proof}

\begin{rem}[Relative Euler number]\label{r:relative euler class}

Let $E^\star$ be the ambient manifold of a foliated projective block $\FF(\pi_1(O^\star),B^\star,h^\star)$ (see Definitions~\ref{d:foliated block} and \ref{d:foliated block Dehn}). Assume that $O^\star$ has no singular points (thus it is a manifold) and recall that $E^{\star}$ is the total space of a circle bundle over $B^\star$. Since $h(\beta_\star)$ is parabolic and the  trace of $\beta_\star$ is $\partial B^\star$, we can use its fixed point in order to define a section $s_{h_\star}:\partial B^\star\to E^\star$.

The relative Euler number $\eu(h^\star,s_{h_\star})$ measures the obstruction to extend $s_{h_\star}$ to a global section of the circle bundle defined by $h^\star$ over $B^\star$. In general $|\eu(h^\star,s_{h_\star})|\leq |\chi(B^\star)|$ (see \cite[Proposition 3.2]{Ghys2}) but in our foliated blocks $|\eu(h^\star,s_{h_\star})|=|\chi(B^\star)|=2g-1$ that can be computed explicitly by using the Milnor algorithm (observe that this is a Fuchsian action so a maximal relative Euler number is expected). The sign just depends in the chosen orientation in $B^\star$. This is the analogue of the Milnor-Wood inequality \cite{Milnor,Wood} for compact surfaces with boundary.

If $O^\star$ has cone points, then similar results apply. In this case $\eu(h^\star,s_{h_\star})$ is defined as the sum of the rotation numbers of the elliptic elements chosen in the presentation of $G^\star$ and the number obtained by using the Milnor algorithm in the parabolic torus as above. The absolute value of the relative Euler number is in this case the absolute value of the Euler characteristic of the base orbifold (again this comes from the fact that the action is Fuchsian, see \cite[Chapter 5]{Montesinos}).
\end{rem}
 
\subsection{Examples}\label{ss:subsection examples}

\begin{exmp}\label{e:F-H}
Let $S$ be the closed surface of genus $2$ and the separating loop $\sigma$ is the one such that $S\setminus \bm{\sg}$ is homeomorphic to two punctured torus. In this case each $B^\star$ is homeomorphic to $H$, where $H$ is a torus with a disk removed (a handle). See Figure~\ref{f:F-H-P} in order to check the elements of this example.

Recall that $\partial H$ is a circle and $\pi_1(H)=\Z\ast\Z=\langle \alpha,\gamma\rangle$ where $\alpha$ and $\gamma$ are loops homotopic (in $H$) to the meridian and the equator of the former torus. Let $\beta$ be a parametrization of $\partial H$, with the induced orientation from $H$, changing orientations if necessary, we can assume that $\beta$ is homotopic to the commutator $[\alpha,\gamma]=\alpha^{-1}\gamma^{-1}\alpha\gamma$.

Let $p=\dfrac{z-1}{-z+2}$ and $q=\dfrac{z+1}{z+2}$ be the generators of a projective action of the free group on two generators over the circle\footnote{Identifying the circle with the projective line}. More precisely, $\langle p,q\rangle$ is isomorphic to $\Z\ast\Z$ since they can be identified with generators of deck transformations associated to a punctured torus with a cusp (see fig. \ref{f:F-H-P}). As a consequence the action is minimal by Remark~\ref{r:minimal cusps}. These maps $p,q$ are hyperbolic and $[p,q]\equiv z + 6$ is parabolic and fixes exactly the $\infty$ point of $S^1\equiv\R\cup\{\infty\}$. Let $h:\pi_1(H)\to \Diff^\omega(S^1)$ be the faithful action defined by $h([\alpha])=p$ and $h([\gamma])=q$. The suspension of the previous homomorphism defines the projective block $\FF(\Z\ast\Z,H,h)$.

\end{exmp}

\begin{exmp}\label{e:F-P}
Let $O=S^2(2,2,3,3)$ and let $\sigma$ be a loop separating the cone points in pairs so that each connected component of $\bm{O}\setminus\bm{\sigma}$ is the underlying space of a once punctured $S^2(2,3)$ orbifold. In this case each $B^\star$ are homeomorphic to a $2$-sphere with three pairwise disjoint disks removed,  i.e. a {\em pair of pants} that will be denoted by $P$. Let $\alpha,\beta$ and $\gamma$ parametrizations of the three circle boundaries of $P$, we shall assume that these parametrizations are oriented according to the orientation induced by the orientation of $P$ and $\alpha,\gamma$ are associated to cone points. See Figure~\ref{f:F-H-P} in order to check the elements of this example.

It follows that $\beta^{-1}$ is homotopic to $\alpha\ast\gamma$ and $\pi_1(P)=\Z\ast\Z=\langle[\alpha],[\gamma]\rangle$, where $[\alpha]$ and $[\gamma]$ are the homotopy classes relative to $\alpha$ and $\gamma$ respectively.

Let $f=\frac{-1}{z},g=\frac{1}{1-z}$ be the two generators of $\PSL(2,\Z)=\Z_2\ast\Z_3$, so that $f^2=\id$, $g^3=\id$ and $f\circ g=z-1$. They act analitically on the circle as M\"obius transformations. Let $h:\pi_1(P)\to \Diff^\omega(S^1)$ be the homomorphism defined by $h([\alpha])=f$ and $h([\gamma])=g$.

Let $\FF(\PSL(2,\Z),P,h)$ be the associated foliated block, its ambient space is $P\times S^1$. This foliation has exactly three transverse boundary components which are three tori. Each component is identified respectively with $\bm{\alpha}\times S^1$, $\bm{\beta}\times S^1$ and $\bm{\gamma}\times S^1$. The trace foliations on these tori will be called $\zeta_\alpha$, $\zeta_\beta$ and $\zeta_\gamma$ respectively.
\begin{itemize}
	\item $\zeta_\alpha$ is just the suspension of $f$ and it is conjugated to a linear flow associated to a $\rot(f)=1/2$ rotation (since $f^2=\id$)
                                               
	\item $\zeta_\beta$ is the suspension of $f\circ g \equiv z-1$ which is a parabolic diffeomorphism of the circle with a fixed point at $\infty$.
	
	\item $\zeta_\gamma$ is just the suspension of $g$ and it is conjugated to a linear flow associated to a $\rot(f)=1/3$ rotation (since $g^3=\id$ and $\rot(g)=1/3$).
\end{itemize}

Performing Dehn's filling on the elliptic tori we obtain the foliated projective block $\overline{\FF}(\PSL(2,\Z),P,h)$.
\end{exmp}

\begin{figure}
\centering
\includegraphics[scale=0.33,frame]{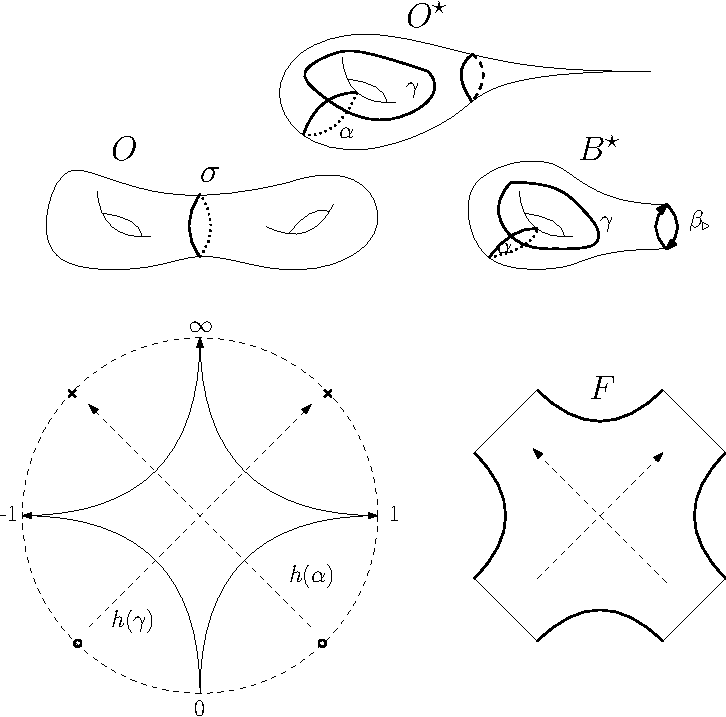}
\hspace{0.25 cm}
\includegraphics[scale=0.33,frame]{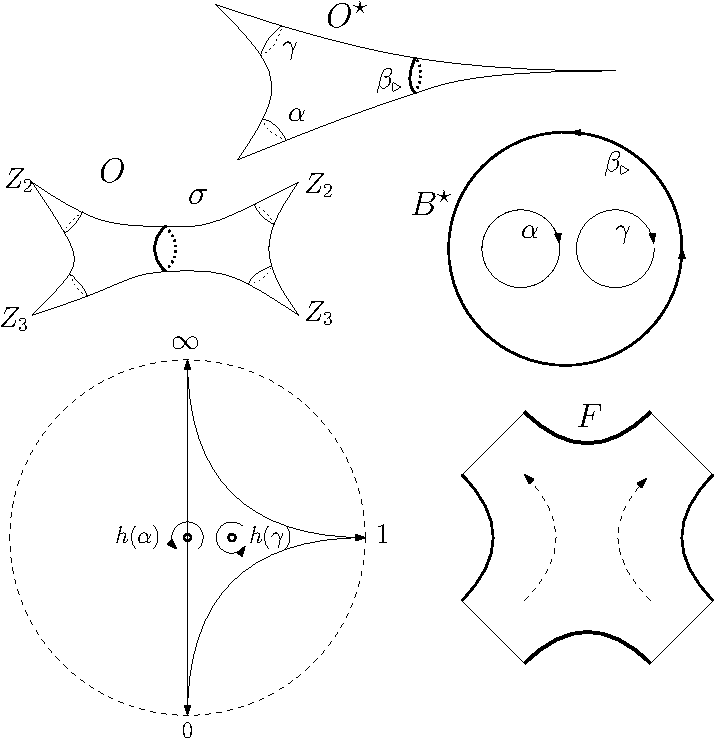}
\caption{The elements of the foliated blocks constructed in examples \ref{e:F-H} and \ref{e:F-P}.  Here, $F$ denotes a fundamental region for the action of $\pi_1(B^\star)$ on $\widetilde{B}^\star$.}\label{f:F-H-P}
\end{figure}

\section{Topology of leaves}\label{s:realization}

In order to prove Theorem~\ref{Main Theorem}, we need to codify the topology of any noncompact oriented surface in a suitable way. Our first target will be to prove the next Proposition.

\begin{prop}\label{p:surface type 0}
Every oriented noncompact surface (without boundary) can be obtained by a limit of suitable boundary unions between  countably many leaves of type $0$.
\end{prop}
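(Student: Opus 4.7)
The plan is to reduce the statement to the Kerékjártó--Richards classification of noncompact oriented surfaces, which says that any such surface $S$ is determined up to homeomorphism by its genus $g\in\{0,1,\dots,\infty\}$ together with the pair $(E,E_g)$, where $E$ is its (compact, totally disconnected, metrizable) space of ends and $E_g\subseteq E$ is the closed subset of ends accumulated by genus. It therefore suffices to realize each admissible triple $(g,E,E_g)$ as the topological type of a surface obtained as an inductive limit of boundary unions of type $0$ leaves.

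The construction I would carry out is combinatorial, encoded by a countable gluing graph $\Gamma$: vertices correspond to copies $L_v$ of a type $0$ leaf and each edge $e=\{v,v'\}$ records an identification of a chosen noncompact boundary component of $L_v$ with one of $L_{v'}$ by an orientation-reversing homeomorphism $\mathbb{R}\to\mathbb{R}$. Because, by Proposition~\ref{p:3 topologies}, each type $0$ leaf has a Cantor set of ends and infinitely many noncompact boundary components whose ends-at-infinity are dense in this Cantor set, such gluings are extremely flexible: one can freely prescribe both the combinatorial pattern of the identifications and the approximate position of each boundary arc relative to the Cantor end-structure. I would build $\Gamma$ in stages, in three interleaved layers:
\begin{enumerate}
\item[(i)] \emph{End space layer.} Use a tree-like subgraph $\Gamma_0\subset\Gamma$ of simple (unpaired) gluings to realize $E$ as the end space. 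Start from one leaf $L_{v_0}$ whose Cantor set of ends is mapped to a clopen partition of $E$; at each stage attach new leaves along the boundary components associated to a prescribed refinement of the current clopen partition, arranged so that the induced inverse limit of clopen partitions is exactly the Stone dual of $E$.
\item[(ii)] \emph{Boundary elimination layer.} Enumerate all noncompact boundary components that appear in the (countable) collection of leaves; at each stage pair an enumerated unglued component with another still-unglued component situated near the same end (or introduce a new auxiliary leaf whose boundary bridges them). A back-and-forth argument guarantees that in the limit every noncompact boundary arc is identified with exactly one other, so the resulting surface has no boundary.
\item[(iii)] \emph{Genus layer.} Build genus by gluing pairs of boundary components of already-present leaves to themselves (introducing a handle each time). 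To place a handle near a subset $E'\subseteq E_g$, choose two boundary components of some $L_v$ whose associated ends both lie in a prescribed clopen neighborhood of a point of $E'$ and glue them; by performing such handle gluings along a sequence that is cofinal in every neighborhood basis of every end in $E_g$ (and along \emph{no} cofinal system in $E\setminus E_g$), the genus-accumulating ends of the limit surface are precisely $E_g$, and the total genus is $g$.
\end{enumerate}

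The three layers are carried out simultaneously by a standard diagonal enumeration. The main technical obstacle is step (ii) combined with (iii): one must ensure that the handle additions used to produce genus do not accidentally merge ends or create new ones, and that the back-and-forth pairing of boundary arcs does not force unwanted handles. This is controlled by always performing pairings between boundary arcs whose end-data lie in the same clopen piece of the current partition of $E$, and by checking at each stage that the partial surface still has $E$-compatible end space. Once these compatibilities are preserved at every stage, the classification theorem of Richards identifies the resulting inverse limit surface with the target $S$, completing the proof.
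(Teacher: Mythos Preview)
There is a genuine gap in step~(iii). A type~$0$ leaf is planar and simply connected (it embeds in the hyperbolic disc as a convex region bounded by infinitely many geodesics). If you glue \emph{one} pair of its noncompact boundary arcs, the resulting surface has $\pi_1\cong\Z$ but genus~$0$: any simple closed curve crossing the glued arc once separates, because the corresponding arc in the original simply connected leaf already separated it. So a single such gluing produces a cylinder-with-extra-boundary, not a handle. To force positive genus from type~$0$ pieces you must glue at least two \emph{interleaved} pairs of boundary arcs, so that the two new loops have nonzero algebraic intersection. The paper makes precisely this point: it first manufactures a type~$1$ leaf (cyclic $\pi_1$) from two type~$0$ leaves, records the induced \emph{inner/outer} dichotomy on its remaining boundary arcs (Remark~\ref{r:out in}), and only then builds the handle tile $H^0$ by attaching a further type~$0$ leaf along one inner \emph{and} one outer arc; this inner/outer asymmetry is exactly what makes the intersection number nonzero.

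The same issue contaminates step~(ii). Each back-and-forth pairing of leftover boundary arcs \emph{within} the growing surface adds a free generator to $\pi_1$, and whether that generator contributes to genus depends on its intersection pattern with the loops already present --- a condition that ``both ends lie in the same clopen piece of~$E$'' does not control. The paper avoids this difficulty altogether by never pairing leftover arcs with each other: once the seed $W^\nu_F$ carrying all of the intended topology is built, every remaining boundary arc is closed off by a \emph{non-recurrent completion}, i.e.\ by attaching an infinite rooted tree of \emph{fresh} type~$0$ leaves. Since the gluing graph of this completion is a forest, it adds no $\pi_1$ and no genus; topologically it is just attaching half-planes (Remark~\ref{r: plane}, Lemma~\ref{l:completing surfaces}). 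The verification that $\widehat{W^\nu_F}\cong S$ is then carried out via the Prishlyak--Mischenko classification of noncompact surfaces \emph{with boundary}, rather than by an end-by-end diagonal argument. Your three-layer scheme could likely be repaired along these lines, but as written it does not establish the proposition.
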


Observe that a leaf of type $1$ can be obtained as a boundary union of two leaves of type $0$ identifying two pairs of boundary components. Therefore, in order to prove Proposition~\ref{p:surface type 0}, we can consider also leaves of type $1$. 

Recall also that two noncompact oriented surfaces are homeomorphic if and only if they have the same genus (possibly infinite) and their end spaces are homeomorphic via a homeomorphism preserving the planar and nonplanar ends~\cite{kerekjarto, Richards}. 
\begin{note}\label{n:step construction}
It is well known that the binary tree has a Cantor set of ends. Every closed subset $F$ of the Cantor set can be obtained as the space of ends of a connected subtree $T_F$ of the binary tree. Let $V_F$ and $E_F$ be the sets of vertices and edges of $T_F$ respectively. Let $\nu: V_F\to \{0,1\}$ be any function, that will be called a {\em vertex coloring}. Let $v\in V_{F}$, let us define $\St(v)$ as the set of edges that contain $v$ in their boundaries. Let $\deg(v)=\#\St(v)$, that will be called the {\em degree}  of the vertex $v$. Since $T_F$ is a subtree of the binary tree, it follows that $\deg(v)\leq 3$ for all $v\in V_F$. 

Without loss of generality we can assume that the root
element of the binary tree belongs to $T_F$. This root vertex will be denoted by $\bm{\mathring{v}}$. The vertices of a tree are partitioned by levels from the root element: level $0$ is just the root element, and, recursively, a vertex is of level $k$ if it is connected by an edge with some vertex of level $k-1$.

The root vertex separates the binary tree in two components, they will be called {\em positive} and {\em negative}. The {\em oriented level} of a vertex $v$ at level $k$ is defined as $k$ if $v$ lies in the positive side and $-k$ otherwise, it will be denoted by $\lvl(v)$. The oriented levels define a orientation on the binary tree just by declaring the origin of an edge $e$ as the boundary vertex with lowest oriented level, see Figure~\ref{f:binary tree}. Let $o(e)$, $t(e)$ denote the origin and target of an oriented edge.
\end{note}

\begin{defn}\label{d:step construction}
Let $T_F$ be a subtree of the binary tree. Let $K_v^0=S^2\setminus\bigsqcup_{e\in\St(v)}B_e^2$, where $\{B_e^2\}_{e\in\St(v)}$ is a collection of $\deg(v)$ pairwise disjoint open balls; and let $K_v^{1}=T^2\setminus\bigsqcup_{e\in\St(v)}B_e^2$. Let $C^e_v$ denote a boundary component of $K_v^i$ obtained by removing the disk $B^2_e$, $e\in\St(v)$. Let $\lambda^e: C^e_{o(e)}\to C^e_{t(e)}$ be orientation reversing homeomorphisms defined for each oriented edge (choose orientations on $C^e_v$ compatible with those of $K^i_v$).

Let $\nu: V_F\to \{0,1\}$ be a vertex coloring and let us consider the equivalence relation in $\bigsqcup_{v\in V_F}K_v^{\nu(v)}$ generated by the identification of points $x\in C^e_{o(e)}, y\in C^e_{t(e)}$ if and only if $\lambda^e(x)=y$. Let $S_F^\nu$ be the open connected orientable surface obtained as the quotient of $\bigsqcup_{v\in V_F}K_v^{\nu(v)}$ by this equivalence relation.
\end{defn}

It is clear that the process described above generates all the topologies of noncompact connected oriented surfaces,  see Figure~\ref{f:binary tree} for a sketch of this construction. Just for simplicity we shall consider trees without ``dead ends'', i.e., vertices where $\deg{v}=1$ are forbidden. Under this assumption the root vertex always has degree $2$. In this case the above process generates all the noncompact oriented surfaces with at least two ends. The oriented surfaces with one end are just the plane, the Loch Ness monster, and the plane with finitely many handles attached (where each number of handles defines a different topology), one-ended oriented surfaces will be treated separately.

\begin{figure}
\centering
\includegraphics[scale=0.35,frame]{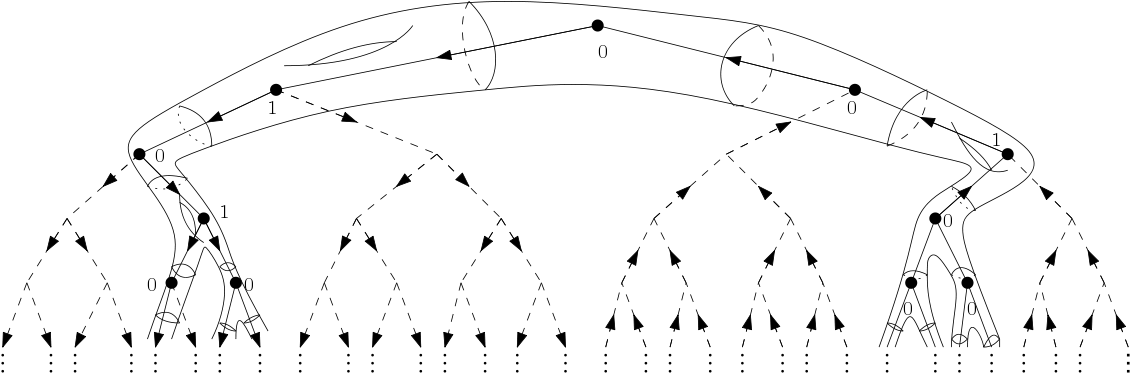}
\caption{A representation of the method of construction of $S^\nu_F$. Undotted edges represent the tree $T_F$ as a subtree of the oriented binary tree. The coloring is given by the numbers next to each vertex.}\label{f:binary tree}
\end{figure}

\subsection{Generating topology}

\begin{defn}\label{d:complete filling}
Let $N$ be an open oriented surface with boundary, assume that $\partial N$ has a noncompact boundary component, denote this component by $B$. A {\em trivial filling} of $N$ at $B$ is the surface $\widehat{N}_B$ (possibly with boundary) obtained by attaching a halfplane to $B$. This is topologically equivalent to removing the boundary component $B$ from $N$.

If $\cB$ is a subset of noncompact boundary components, then we denote by $\widehat{N}_\cB$ the surface obtained by performing trivial fillings at every $B\in\cB$.

Let $\widehat N$ denote the surface obtained by performing trivial fillings at each one of its noncompact boundary components, it will be called the {\em complete filling} of $N$ .
\end{defn}

\begin{rem}\label{r:interior=complete filling}
If $N$ has no compact boundary components, then $\widehat N$ is a surface without boundary which is clearly homeomorphic to the interior of $N$, $\Intr(N)$.
\end{rem}

\begin{rem}\label{r: plane}
A trivial filling of $N$ at some noncompact boundary component $B$ can be also obtained by the following process:
\begin{enumerate}
	\item Let $L_{0,1}$ be a leaf of type $0$ and consider $N_0=N\cup_B L_{0,1}$ the surface obtained by gluing the boundary component $B$ of $N$ 
	with any of the components of $\partial L_{0,1}$.
	\item Assume $N_{k}$ was defined. Let $L_{k,n}$, $n\in\N$, be a countable collection of distinct surfaces homeomorphic to leaves of type $0$. Let $\cB_k$ be the countably infinite set of noncompact boundary components of $N_k$  that are also boundary components of the $L_{k-1,n}$'s used in the previous step. Let $\xi_k: \cB_k\to\N$ be any bijection. Let $N_{k+1}$ be the surface obtained by gluing each boundary component of $B\in\cB_k$ with a boundary component of $L_{k,\xi_k(B)}$.
\end{enumerate}
The trivial filling $\widehat{N}_B$ is clearly homeomorphic to the surface obtained as the direct limit of the above construction. As a consequence, the complete filling of a manifold can be achieved by gluing countably many surfaces homeomorphic to leaves of type $0$ as indicated in Remark~\ref{r: plane}.
\end{rem}

\begin{note}\label{n:end_space}
Let $N$ be a noncompact oriented surface with (or without) boundary. Set $\cE(N)$ be the space of ends of $N$ with its usual topology and let $\cB(N)$ be the set of ends that are defined by noncompact boundary components (each noncompact boundary component defines two ends). An end of $N$ is called {\em planar} if there exists a neighborhood of that end without handles, otherwise it is called {\em nonplanar}.

Let $\cE'(N)$ and $\cB'(N)$ be the set of planar ends in $\cE(N)$ and $\cB(N)$ respectively. Analogously, define $\cE''(N)$ and $\cB''(N)$ be the nonplanar ends of $N$.
\end{note}

\subsection{One-ended oriented surfaces}\label{ss:one end surfaces}

Let us prove Proposition~\ref{p:surface type 0} for one-ended surfaces. The simplest case is the plane which was given  by means of Remark~\ref{r: plane} as the complete filling of a leaf of type $0$. 

In order to construct a punctured torus, i.e. a plane with a handle attached, let $L^0$ be a leaf of type $0$ and $L^1$ a leaf of type $1$. Let $B_{0}, B'_{0}$ be two arbitrary boundary components of $L^0$ and let $B_1^+$, $B_1^-$ be two boundary components of $L^1$ 
such that $B_1^+$ is outer and $B_1^-$ is inner (recall  Definition~\ref{d:out in} for the notion of outer and inner). Let $H^0 = L^0\sqcup L^1/(B_0\sim B_1^+; B'_0\sim B_1^-)$ be the surface obtained by gluing
$L^0$ with $L^1$ identifying the chosen pairs of boundary components (reversing orientations, in order to preserve the orientability of the surface), see Figure~\ref{f:H1}.

It is clear that $H^0$ has a Cantor set of ends, which are planar. The set of ends induced by  the noncompact boundary components is dense in the space of ends of $H^0$. In addition, the complete filling (or, equivalently, the interior) of $H^0$  is homeomorphic to a once punctured torus.

\begin{figure}
\centering
\includegraphics[scale=0.35,frame]{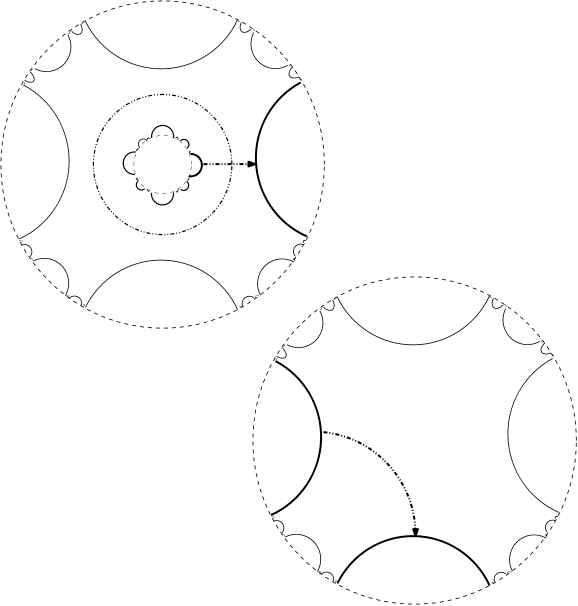}
\includegraphics[scale=0.38125,frame]{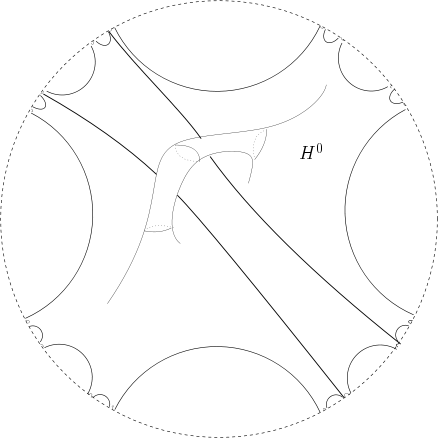}
\caption{Representation of the surface $H^0$. On the left the leaves of type $0$ and $1$ used in its construction. Bold lines are the identified boundary components.}\label{f:H1}
\end{figure}

Thus, the topology of a plane with a handle attached is realized by $\widehat{H^0}$. Let $\{M_i\}_{i=1}^k$, $k\in\N\cup\{\infty\}$, be a family of surfaces homeomorphic to $H^0$. Let $H^0_k$ denote the surface (with boundary) obtained by identifying one arbitrary boundary component of $M_i$ with another of $M_{i+1}$ for $1\leq i< k$. It follows that the plane with $k$ handles attached is just $\widehat{H^0_k}$, the case $k=\infty$ is just the Loch Ness Monster. Since a complete filling can be obtained by boundary unions with leaves of type $0$, the above construction shows that Proposition~\ref{p:surface type 0} works for one-ended surfaces.

\subsection{Oriented surfaces with more than one end}

Let us consider now a noncompact oriented surface $S$ with more than one end and let $F$ be the space of ends of $S$. Let $T_F$ be a subtree of the binary tree with no dead ends, which contains the root vertex $\bm{\mathring{v}}$ and let $\nu: V_F\to\{0,1\}$ be a coloring such that $S$ is homeomorphic to the surface $S_F^\nu$ defined in Definition~\ref{d:step construction}. 

Let $H^1$ be the surface obtained by gluing two boundary components of two leaves of type $1$ in the following way: on each leaf we choose two boundary components, inner and outer, then identify (reversing orientation) the chosen inner (resp. outer) components. 

The boundary components of $H^1$ can still be classified as outer and inner, see Figure~\ref{f:Co}.

\begin{defn}\label{d:noncompact tiles}
Let $P^i$ denote a surface homeomorphic to any leaf of type $i$, observe that $H^i$ is homeomorphic to the connected sum of $P^i$ with a torus. These manifolds with boundary will be our basic pieces to recover the topology of $S$, these manifolds $P^0,P^1,H^0,H^1$ will be called {\em noncompact tiles} in analogy with a tiling construction. Set $\cT=\{P^0,P^1,H^0,H^1\}$.
\end{defn}

\begin{figure}[h]
\centering
\includegraphics[scale=0.4,frame]{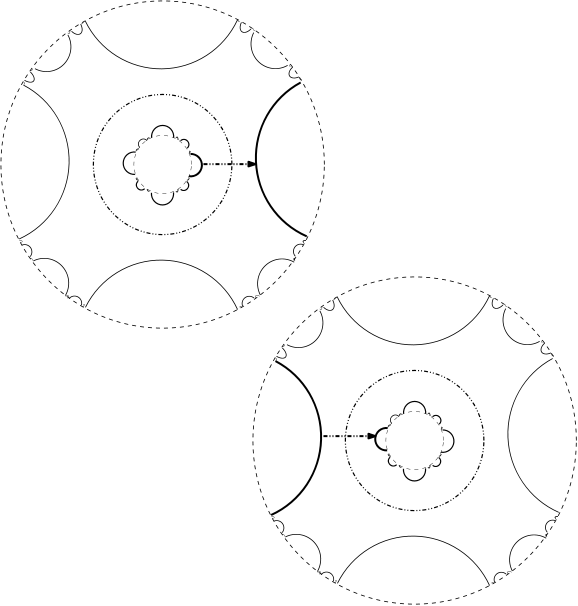}
\includegraphics[scale=0.4725,frame]{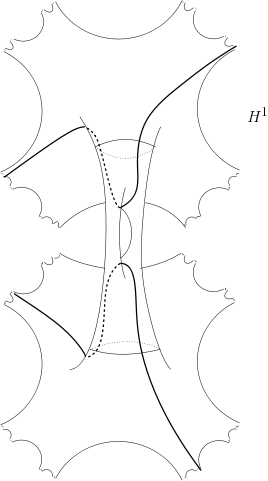}
\caption{The surface $H^1$. Outer (resp. inner) boundary components of $H^1$ are the union of outer (resp. inner) boundary components of the former two leaves of type $1$. Bold lines represent the identified boundary components.}\label{f:Co}
\end{figure}

\begin{defn}\label{d:realization W_F}
Given a tree $T_F$ as above and a coloring $\nu$, for each $v\in V_F$ we  shall choose a noncompact tile $N_v\in \cT$ as follows. 

\begin{itemize}
\item If $v = \bm{\mathring{v}}$ and $\nu(\bm{\mathring{v}})=0$, then $N_{\bm{\mathring{v}}}=P^1$. 

\item If $v = \bm{\mathring{v}}$ and $\nu(\bm{\mathring{v}})=1$, then  $N_{\bm{\mathring{v}}}=H^1$.

\item If $v\neq\bm{\mathring{v}}$, $\deg(v)=2$ and $\nu(v)=0$, then $N_v=P^0$ 

\item If $v\neq\bm{\mathring{v}}$, $\deg(v)=2$ and $\nu(v)=1$, then $N_v=H^0$.

\item If $\deg(v)=3$ and $\nu(v)=0$, then $N_v=P^1$.

\item If $\deg(v)=3$ and $\nu(v)=1$, then  $N_v=H^1$.
\end{itemize}

An oriented edge $e\in E_F$ will be also used to denote a reversing orientation homeomorphism from a boundary component $B^o_e$ of $N_{o(e)}$ to a boundary component $B^t_{e}$ of $N_{t(e)}$.

For edges $e$, $e'$, so that $t(e)=o(e')$, it will be assumed that $B^t_e$ and $B^o_{e'}$ are different boundary components of the same noncompact tile.

In the case where $\deg(v)=3$, $v$ can be the origin (resp. the target) of two distinct edges $e,e'$. It will be always assumed that $B^o_{e}$ and $B^o_{e'}$ (resp. $B^t_{e}$  and $B^t_{e'}$) are not both inner and neither both outer boundary components of $N_v$ (see Definition~\ref{d:out in}). In the specific case where $o(e)=\bm{\mathring{v}}$ (resp. $t(e)=\bm{\mathring{v}}$) is the root element, the boundary component $B^o_e$ is outer (resp. $B^t_e$ is inner).

Let $W_F^\nu$ be the surface (with boundary) given as the quotient of $\bigsqcup_{v\in V_F} N_v$ by the equivalence relation generated by the boundary gluing maps $e: B^o_e\to B^t_e$, $e\in E_F$. A sketch of this construction can be seen in Figure~\ref{f:W_F}.
\end{defn}

\begin{lemma}\label{l:prop of W_F}
The manifold $W^\nu_F$ has the following properties:
\begin{enumerate}
\item $\partial W^\nu_F$ consists of countably many noncompact boundary components.

\item The space of ends $\cE(W^\nu_F)$ is a Cantor set.

\item The genus of $W^\nu_F$ is the same as the genus of $S^\nu_F$.

\item There exists a canonical embedding $\iota:\cE(S^\nu_F)\hookrightarrow\cE(W^\nu_F)\setminus \cB(W^\nu_F)$.

\item $\cE''(W^\nu_F)=\iota(\cE''(S^\nu_F))$.

\item $\cB(W^\nu_F)$ is dense in $\cE(W^\nu_F)$.
\end{enumerate}
\end{lemma}
\begin{proof}
The unique nontrivial property is property $4$ and $5$ as the {\em canonical embedding} must be defined. Observe that $T_F$ can be embedded in $S^\nu_F$ (resp. $W^\nu_F$). Such an embedding is called {\em canonical} if each vertex $v$ is mapped into $K^i_v$ (resp. $N_v\in\cT$). These canonical embeddings induce natural mappings $\phi_1:\cE(T_F)\to\cE(S^\nu_F)$ and $\phi_2:\cE(T_F)\to\cE(W^\nu_F)\setminus\cB(W^\nu_F)$ where $\phi_1$ is clearly a homeomorphism. Define $\iota=\phi_2\circ\phi_1^{-1}$. This map must be injective since, by construction, each bifurcation of $T_F$ produces a topological bifurcation in  $W^\nu_F$. As in the construction of $S^\nu_F$, whenever $\nu(v)=1$ we include a handle in the construction, thus nonplanar ends of both $S^\nu_F$ and $W^\nu_F$ correspond exactly with the ends of $T_F$ accumulated by the color $1$ and therefore are canonically identified. Observe also that every end in $\cE(W^\nu_F)\setminus \iota(\cE(S^\nu_F))$ must be planar since all the noncompact tiles in $\cT$ have planar ends.
\end{proof}

\begin{figure}[h]
\centering
\includegraphics[scale=0.6,frame]{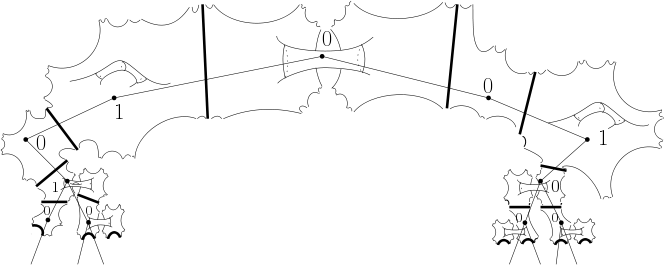}
\caption{The surface $W^\nu_F$. Bold lines represent boundary components identified by edges. The numbers represent a coloring of $V_F$, similar to those used in figure~\ref{f:binary tree}. Observe that a handle appear for each vertex whose color value is equal to $1$.}\label{f:W_F}
\end{figure}

\begin{prop}\label{p:W=S}
The interior of $W^\nu_F$ is homeomorphic to $S^{\nu}_F$.
\end{prop}
\begin{proof}
The interior of $W^\nu_F$ is obtained by removing its boundary components. It turns out that $\Intr(W^\nu_F)$ is obtained by suitable boundary gluings between noncompact tiles were all their noncompact boundary components are removed except for those components that are attached.

Let $N^{\circ}_v$ be the manifold obtained by removing the noncompact boundary components of $N_v$ except those (at most three) that are attached to other noncompact tiles in the construction of $W^\nu_F$. All these manifolds have the following property:

\textbf{Property A:} If $\lvl(v)>0$ (resp. $\lvl(v)<0$), then for any compact set $K\subset N_v$ and any point $x\in N_v^\circ\setminus K$ there exists a path $\gamma_v:[0,1]\to N_v^\circ\setminus K$ such that $\gamma_v(0)=x$ and $\gamma_v(1)\in B^o_{e}$ (resp. $\gamma(1)\in B^t_{e}$) for some edge with $o(e)=v$ (resp. $t(e)=v$). If $\lvl(v)=0$, i.e. $v=\mathring{\bm{v}}$, then any point in $x\in N_v^\circ\setminus K$ can be conencted by a path with some of its two boundary components.

It follows that $\cE(\Intr(W^\nu_F))=\iota(\cE(S^{\nu}_F))$, since Property A allows to construct (inductively) a semiinfinite proper path beginning at any point in the complement of a compact set and such that its end do not lie in $\cE(N^\circ_v)$ for all $v\in V_F$, hence it defines an end in $\iota(\cE(S^{\nu}_F))$.

By the properties listed in \ref{l:prop of W_F}, it follows that $\Intr(W^\nu_F)$ and $S^\nu_F$ have the same space of ends and the homeomorphism $\iota:\cE(S^\nu_F)\to\cE(\Intr(W^\nu_F))$ preserves planar and nonplanar ends. Since both manifolds have the same genus by construction and are both oriented, it follows from the classification Theorem of noncompact surfaces \cite{kerekjarto, Richards} that $\Intr(W^\nu_F)$ is homeomorphic to $S^\nu_F$.
\end{proof}

\begin{proof}[Proof of Proposition~\ref{p:surface type 0}]
Recall that $\Intr(W^\nu_F)$ is also homeomorphic to the complete filling (see Definition~\ref{d:complete filling}) of $W^\nu_F$ and this can be also obtained by suitable gluings between noncompact tiles homeomorphic to $P^0$ (see Remark~\ref{r: plane}). Therefore Proposition~\ref{p:surface type 0} follows from Proposition~\ref{p:W=S} and Subsection~\ref{ss:one end surfaces}.
\end{proof}

\section{Generic transverse gluing}\label{s:transverse gluing}

Let $\FF^\triangleright$ and $\FF^\triangleleft$ be foliated projective blocks (as defined in Section~\ref{s:projective blocks}) constructed from a typical orbifold $O$, a separating loop $\sg$ and suitable representations $h^\star:\pi_1(O^\star)\to\PSL(2,\R)$. The parabolic tori of $\FF^\triangleright$ and $\FF^\triangleleft$ can be transversely glued by an orbit preserving homeomorphism since the trace foliations of the parabolic tori are conjugated to each other. This is a direct consequence of Proposition~\ref{p:always parabolic}.

 Assume that the typical orbifold is a closed surface. Any attaching map between the parabolic tori which is fiber preserving (but that does not necessarily preserve the trace foliations), can be written in the form $(x,y)\mapsto (x,\psi(x)(y))$, where $\psi: \partial B^\triangleright\to \Homeo_+(S^1)$. The isotopy class of this attaching map depends just in the choice of orientations and the degree of the map $x\to \psi(x)^{-1}(y_0)$ for any distinguished $y_0\in S^1$, this degree is denoted by $m(\psi)$. If $M(O,\sg,\psi)$ denotes the Seifert fibration constructed in this way, its Euler number is $m(\psi)$.

 {\bf Definition 3.1.} Given $\psi: \partial B^\triangleright\to \Homeo_+(S^1)$, it is said that $\psi$ (and hence also the homeomorphism $(x,y)\mapsto$ $(x,\psi(x)(y))$ and the degree $m(\psi)$) is {\em admissible} if the trace foliations on the parabolic tori are preserved, this specific case will be noted by $M(O,\sg)$.

\begin{rem}\label{r:ambient manifold}
Let $\eu(h_\triangleright,s_{h_\triangleright})$ and $\eu(h_\triangleleft,s_{h_\triangleleft})$ denote the (relative) Euler numbers defined in Remark~\ref{r:relative euler class}. If $\psi$ preserves the trace foliations, then it should satisfy that $$|m(\psi)|=|\left(\eu(h_\triangleright,s_{h_\triangleright})+\eu(h_\triangleleft,s_{h_\triangleleft})\right)|$$ which is the Euler number of the Seifert fibration (see \cite[Lemma 3.1]{Ghys2}).

More precisely, if $O$ is a closed hyperbolic surface and $\sg$ is a separating loop, then $$|\eu(M(O,\sg))|=|\chi(B^\triangleright)-\chi(B^\triangleleft)|\;.$$ Note that this is the unique possibility since $|\eu(M(O,\sg)|$ cannot be maximal by Ghys' Theorem \cite[T\'eor\'eme 3]{Ghys2}. 

This formula also works in the case where $O$ is any typical orbifold. It is not difficult to give the symbolic representation of $M(O,\sg)$ as a Seifert fibration from the previous data, it just remains to give structure constants associated to each cone point. These constants are the rotation numbers of the elliptic generators chosen in the presentations of $G^\triangleright$ and $G^\triangleleft$,  thus the topology of $M(O,\sigma)$ may depend on the choice of $h^\star$.
\end{rem}

\begin{rem}\label{r:Selberg}
Selberg's Lemma~\cite{Selberg} implies that any hyperbolic $2$-dimensional orbifold is finitely covered by a closed surface of genus $g\geq 2$. It follows that $M(O,\sg)$, as a fibration, is finitely covered by a closed $3$-manifold which is a circle fiber bundle over a hyperbolic surface.
\end{rem}

If an admissible homeomorphism $\psi$ is $C^r$, then we obtain a $C^r$ foliation on the closed $3$-manifold $M(O,\sg)$. This foliation will be minimal since every foliated block is minimal and its ambient manifold is Seifert where its associated base orbifold is the original orbifold $O$.

The simplest case of an admissible gluing map occurs when $\psi(x)(y)=n x + y$ (mod $2\pi$), where $n$ is the Euler class of $M(O,\sg)$. In this case the resulting foliation is analytic (if $n=0$, the attaching map is just the identity).

Let $T^\triangleright$ and $T^\triangleleft$ be complete transverse circles (fibers), in the parabolic boundary tori of $\FF^\triangleright$ and $\FF^\triangleleft$, we shall identify $T^\star$ with the projective line $\R\cup\{\infty\}$. The parabolic boundary tori can be identified with $S^1\times T^\triangleright$ and $S^1\times T^\triangleleft$ respectively, let $\zeta_\triangleright$ and $\zeta_{\triangleleft}$ be their respective trace foliations and let $\ell=h(\beta_\triangleright)$ and $\bar{\ell}=h(\beta_\triangleleft)^{-1}$, which are projective diffeomorphisms defined over $T^\triangleright$ and $T^\triangleleft$ respectively.

Let $$\Psi: (S^1\times T^\triangleright,\zeta_\triangleright)\to (S^1\times T^\triangleleft,\zeta_{\triangleleft})$$ be an admissible homeomorphism of the form  $(x,y)\to$ $(x,\psi(x)(y))$, where $\psi:S^1\to\Homeo_+(T^\triangleright,T^\triangleleft)$. The map $\psi(0):T^\triangleright\to T^\triangleleft$ determines univocally $\Psi$. More precisely, let $\Phi_\triangleright(x)$ and $\Phi_{\triangleleft}(x)$ be the flows whose orbits define, respectively, $\zeta_\triangleright$ and $\zeta_{\triangleleft}$, and such that their tangent vector fields projects to $\partial/\partial\theta$ in $S^1\equiv \R/\Z$ (hence $\Phi_\triangleright(1)(y) = \ell(y)$ and $\Phi_\triangleleft(1)(y) = \bar{\ell}(y)$), it follows that $\psi(x)(y)=\Phi_{\triangleleft}(x)\circ\psi(0)\circ \Phi_\triangleright(-x)(y)$.

There are not many possibilities for such a gluing map $\Psi$ when $\psi(0)$ is $C^1$. This is consequence of the so called Kopell Lemma that will be recalled here.

\begin{lemma}[Kopell Lemma \cite{Navas}]
Let $f$ and $g$ be a pair of commuting diffeomorphisms of $[0,\infty)$ so that both $f$ and $g$ fix $0$, $f$ is of class $C^2$ and is topologically contracting at $0$ and $g$ is of class $C^1$. Then, if $g$ fixes another point, then $g$ is the identity map.
\end{lemma}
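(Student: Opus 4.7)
The plan is to use the standard renormalization/bounded-distortion argument due to Kopell. Set up notation so that, after possibly replacing $f$ by $f^{-1}$ and reflecting, we may assume that $f$ is a $C^2$ orientation-preserving diffeomorphism with $f(x)<x$ on some interval $(0,c]$ and $f^n(x)\to 0$ for every $x\in(0,c]$. Let $b>0$ be a fixed point of $g$ distinct from $0$, which we may assume lies in $(0,c]$. Since $f$ and $g$ commute, every $f^n(b)$ is also fixed by $g$; set $x_n=f^n(b)$, so $x_n\searrow 0$, and write $I_n=[x_{n+1},x_n]$. Because $g$ fixes both endpoints of $I_n$ and is orientation-preserving, $g$ maps $I_n$ to itself.

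The first key step is the bounded distortion estimate for the iterates of $f$. Since $f$ is $C^2$ and $f'>0$ on $[0,b]$, the function $\log f'$ is Lipschitz, with constant $C$ say. For any $x,y\in I_0$ one then has
\[
\left|\log\frac{(f^n)'(x)}{(f^n)'(y)}\right|\le\sum_{k=0}^{n-1}\bigl|\log f'(f^k(x))-\log f'(f^k(y))\bigr|\le C\sum_{k=0}^{n-1}|f^k(x)-f^k(y)|.
\]
Since the intervals $f^k(I_0)=I_k$ are pairwise disjoint and contained in $[0,b]$, the last sum is bounded by $b$, so one obtains a constant $K$ (independent of $n$) with $K^{-1}\le (f^n)'(x)/(f^n)'(y)\le K$ for all $x,y\in I_0$.

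The second step is to derive a contradiction assuming $g|_{I_0}\ne\mathrm{id}$. Pick $y\in I_0$ with $g(y)\ne y$, say $g(y)>y$. Then for $y_n:=f^n(y)\in I_n$ one has $g(y_n)-y_n=f^n(g(y))-f^n(y)=(f^n)'(\xi_n)(g(y)-y)$ by the mean value theorem, while $|I_n|=(f^n)'(\eta_n)|I_0|$ for some $\eta_n\in I_0$. The bounded distortion estimate then yields a uniform lower bound
\[
\frac{g(y_n)-y_n}{|I_n|}\ge\frac{1}{K}\cdot\frac{g(y)-y}{|I_0|}=:c>0.
\]
On the other hand, rescale $g|_{I_n}$ to $[0,1]$ by $G_n(t):=(g(x_{n+1}+t|I_n|)-x_{n+1})/|I_n|$. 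Since $g$ is $C^1$ and $x_{n+1}+t|I_n|\to 0$ uniformly in $t$, we get $G_n'(t)=g'(x_{n+1}+t|I_n|)\to g'(0)$ uniformly; combined with $G_n(0)=0$ and $G_n(1)=1$, this forces $g'(0)=1$ and $G_n\to\mathrm{id}$ uniformly on $[0,1]$. This contradicts the lower bound above.

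Hence $g=\mathrm{id}$ on $I_0$, and by conjugating through $f^n$ (or $f^{-n}$) one gets $g=\mathrm{id}$ on every $I_n$, so by continuity $g=\mathrm{id}$ on $[0,b]$. A standard bootstrapping via commutation with the $f^{\pm n}$, together with the hypothesis that $f$ is topologically contractive at $0$, propagates this to show $g=\mathrm{id}$ on the whole neighborhood of $0$ where the hypotheses apply. The main obstacle is the bounded distortion estimate: it is precisely here that the $C^2$-hypothesis on $f$ is essential, since one needs summability of $|\log f'(f^k(x))-\log f'(f^k(y))|$ along orbits, which fails in general at regularity $C^1$ (where Denjoy-type counterexamples appear).
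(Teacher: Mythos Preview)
The paper does not give its own proof of this lemma; it is simply quoted from Navas's book and used as a black box. Your argument is the classical Kopell proof and is correct: the $C^2$ hypothesis on $f$ yields the uniform bounded-distortion estimate for the iterates $f^n$ on the fundamental interval $I_0$, and the $C^1$ hypothesis on $g$ forces the rescalings $G_n$ to converge uniformly to the identity, contradicting the lower bound coming from distortion if $g$ moved any point of $I_0$.

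One small cosmetic point: Kopell's lemma is usually stated for diffeomorphisms of a half-line $[0,\infty)$ (or an interval), and your last paragraph about propagating $g=\mathrm{id}$ from $[0,b]$ to a full two-sided neighborhood via the $f^{\pm n}$ is a bit loose as written---you get $g=\mathrm{id}$ on $(0,\infty)$ immediately since every positive point is pulled into $[0,b]$ by some $f^n$, but the negative side would require rerunning the argument there. This is harmless for the application the paper has in mind (centralizers of parabolic/hyperbolic elements on $S^1$, where one really works on one side of the fixed point), and does not affect the substance of your proof.
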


\begin{cor}\label{c:corollary Kopell}
Let $f$ be a parabolic (resp. hyperbolic) circle projective diffeomorphism. The centralizer of $f$ in $\Diff^1_+(S^1)$ is exactly the set of parabolic  (resp. hyperbolic) projective diffeomorphisms that fix the same points as $f$.
\end{cor}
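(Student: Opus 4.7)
The plan is to reduce to a standard model by a Möbius conjugation. Since conjugation by an element of $\PSL(2,\R)$ is a smooth diffeomorphism of $S^1$, it preserves both $\Diff^1_+(S^1)$ and the property of being parabolic (resp.\ hyperbolic) projective. Hence I may assume, in the chart $S^1=\R\cup\{\infty\}$, that $f=f_0$ is either $z\mapsto z+1$ (parabolic, with unique fixed point $\infty$) or $z\mapsto \lambda z$ with $\lambda>1$ (hyperbolic, with fixed points $0$ and $\infty$). Any $g\in\Diff^1_+(S^1)$ commuting with $f_0$ permutes the fixed point set. In the hyperbolic case, the chain rule applied to $gf_0g^{-1}=f_0$ at the image $g(p)$ of a fixed point $p$ gives $f_0'(g(p))=f_0'(p)$; since $f_0'(0)=\lambda\neq\lambda^{-1}=f_0'(\infty)$, swapping the two fixed points is impossible, so $g$ fixes each of $0$ and $\infty$ individually.

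For the parabolic case, the commutation relation $g(z+1)=g(z)+1$ on $\R$ gives $g(z)=z+h(z)$ with $h$ a $C^1$ function of period $1$. The decisive step is to encode the $C^1$ regularity at $\infty$ in the chart $w=1/z$; a direct computation yields
\begin{equation*}
\tilde g'(w) \;=\; \frac{1+h'(1/w)}{(1+w\,h(1/w))^{2}}.
\end{equation*}
The denominator tends to $1$ as $w\to 0^{+}$, so the existence of $\tilde g'(0)$ is equivalent to the existence of $\lim_{w\to 0^{+}}h'(1/w)$. Since $h'$ is continuous and $1$-periodic, along the subsequences $1/w_n=n+\alpha$ this limit accumulates on the entire range $h'([0,1])$; hence the limit exists if and only if $h'$ is constant. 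Periodicity of $h$ then forces $h$ to be a constant $c$, so $g(z)=z+c$ is a parabolic Möbius transformation (the identity in the degenerate case $c=0$) fixing $\infty$.

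For the hyperbolic case, set $G_{\pm}(t)=\log|g(\pm e^{t})|$. The identity $g(\lambda z)=\lambda g(z)$ on each ray becomes $G_{\pm}(t+\log\lambda)=G_{\pm}(t)+\log\lambda$, so $G_{\pm}(t)=t+H_{\pm}(t)$ with $H_{\pm}$ of period $\log\lambda$. A short chain-rule computation gives $g'(\pm e^{t})=e^{H_{\pm}(t)}(1+H_{\pm}'(t))$, which must admit a limit as $t\to-\infty$ by $C^1$-regularity of $g$ at $0$; periodicity then forces $H_{\pm}$ to be constant, whence $g(z)=c_{\pm}z$ on each ray. The $C^1$ condition at $0$ (or equivalently the analogous chart computation at $\infty$) equates $c_{+}=c_{-}$, so $g(z)=cz$ globally, a hyperbolic Möbius (or identity) fixing $0$ and $\infty$.

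The main technical obstacle is precisely the collapse of the periodic function $h$ (or $H_{\pm}$) to a constant; this is where the full $C^1$ regularity across the fixed point, rather than merely on one side, enters the argument. Kopell's lemma, as recalled above, is of a complementary nature: it would give that any $g$ with an extra fixed point on the contractive side is the identity there, but it does not by itself single out the projective elements among the centralizer. The derivative computation in the opposite chart may be regarded as the refinement needed to pin $g$ down as an element of the standard $1$-parameter flow.
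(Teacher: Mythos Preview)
Your argument is correct (with one harmless imprecision: in the parabolic case it is the \emph{continuity} of $\tilde g'$ at $0$, not merely the existence of $\tilde g'(0)$, that forces the limit of $h'(1/w)$ to exist; the derivative $\tilde g'(0)=1$ exists regardless, since $\tilde g(w)/w\to 1$). The route, however, is genuinely different from the paper's. The corollary is placed immediately after Kopell's lemma because the intended proof is a direct application of it: pick the unique element $h$ of the projective one-parameter group through $f$ (translations, resp.\ dilations) with $h(0)=g(0)$; then $h^{-1}g$ commutes with $f$, fixes the fixed point(s) of $f$ \emph{and} the extra point $0$, and Kopell on each of the two complementary arcs of $S^1$ (using $f$ on one side and $f^{-1}$ on the other in the parabolic case) gives $h^{-1}g=\id$, so $g=h$. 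Your approach instead computes explicitly in the chart at the fixed point and uses that $C^1$ regularity forces a continuous periodic function to have a limit at infinity, hence to be constant --- in effect a self-contained derivation of the relevant special case of Szekeres--Kopell rigidity. This is more elementary and avoids invoking a black box, at the cost of being computational and tied to the projective normal forms; the Kopell route is shorter and makes transparent why the paper's merely Lipschitz gluing maps in $\cZ^{1\ast}_+(\ell;0)$ escape this rigidity. Your closing remark that Kopell ``does not by itself single out the projective elements'' slightly undersells it: combined with the one-parameter-flow trick above, it does.
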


Recall that the trace foliations of the parabolic boundary tori are also suspensions of the parabolic projective diffeomorphisms $\ell$ and $\overline{\ell}$ (in particular $C^2$), both in the same projective conjugacy class by means of Proposition~\ref{p:always parabolic}. Let $\varsigma$ a projective diffeomorphism such that $\varsigma\circ \overline{\ell}\circ \varsigma^{-1}=\ell$. Since $\Psi$ is leaf preserving it follows that $\overline{\ell}\circ \psi(0)=\psi(0)\circ\ell$ and hence $\varsigma\circ\psi(0)$ must commute with $\ell$. It follows, by means of the previous Corollary~\ref{c:corollary Kopell}, that $\varsigma\circ\psi(0)$ must be a projective map, hence $\psi(0)$ is also projective and the resulting foliation is still projective.

One can think that there are still lots of possibilities for gluing maps since the family of parabolic elements with a common fixed point is in correspondence with the translations of the real line (via stereographic projection). Unfortunately, for each possible such gluing the topology of leaves is still very restrictive as it is stated in the next Proposition (see also \cite[Proposition 1]{ADMV2}).

\begin{prop}\label{p:projective topology}
Let $\Sigma$ be a closed surface and let $h: \pi_1(\Sigma)\to\PSL(2,\R)$ be a homomorphism. Let let $\FF_h$ be the corresponding suspension foliation, assume that it is minimal. If some leaf of $\FF_h$ is of finite type, then every leaf has also finite type and is homeomorphic to a plane or a cylinder.
\end{prop}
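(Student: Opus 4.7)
The plan is first to show that the finite-geometric-type hypothesis forces $h$ to be injective, and then to use the solvable structure of the Borel subgroups of $\PSL(2,\R)$ together with the hyperbolicity of surface groups to deduce that every stabilizer is cyclic.

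The leaves of $\FF_h$ correspond to the orbits of $\rho_h$ on $S^1$; the leaf through a point over $x\in S^1$ is $L_x=\widetilde\Sigma/K_x$ with fundamental group $K_x:=\{g\in\pi_1(\Sigma):h(g)(x)=x\}=h^{-1}(B_x)$, where $B_x\subset\PSL(2,\R)$ is the (solvable, $2$-dimensional) Borel subgroup fixing $x$. Minimality of $\rho_h$ forces every $K_x$ to have infinite index in $\pi_1(\Sigma)$, so $L_x$ is a noncompact cover of $\Sigma$ and $K_x$ is a free group (every noncompact surface has free fundamental group). The finite-geometric-type assumption on the distinguished leaf $L_{x_0}$ says that $K_{x_0}$ is finitely generated, hence a finitely generated free group.

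Focus on the main case $\text{genus}(\Sigma)\geq 2$ and fix a hyperbolic structure so that $\pi_1(\Sigma)$ acts on $\HH$ as a cocompact Fuchsian group (in particular torsion-free and without parabolic elements). By Ahlfors' finiteness theorem, the finitely generated Fuchsian subgroup $K_{x_0}$ is geometrically finite, and in the absence of parabolics this means convex cocompact. Let $C\subset\HH$ be the convex hull of the limit set $\Lambda(K_{x_0})\subsetneq S^1$; it is a proper, closed, convex, $K_{x_0}$-invariant subset of $\HH$. I claim the normal core $N=\bigcap_{g\in\pi_1(\Sigma)}gK_{x_0}g^{-1}$ is trivial. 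Indeed, any nontrivial $n\in N$ would be hyperbolic and would have its axis in $gC$ for every $g\in\pi_1(\Sigma)$, hence in $C':=\bigcap_g gC$. But $C'$ is a closed convex $\pi_1(\Sigma)$-invariant subset of $\HH$, and cocompactness of the $\pi_1(\Sigma)$-action rules out proper such subsets; since $C'\subseteq C\subsetneq\HH$ this forces $C'=\emptyset$, contradicting the existence of the axis. Now $\Ker(h)\triangleleft\pi_1(\Sigma)$ fixes every point of $S^1$ and is therefore contained in $K_{x_0}$, so $\Ker(h)\subseteq N=\{1\}$ and $h$ is injective.

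For arbitrary $y\in S^1$, the stabilizer $K_y=h^{-1}(B_y)$ now injects via $h$ into the solvable group $B_y$, so $K_y$ is a torsion-free solvable subgroup of the word-hyperbolic group $\pi_1(\Sigma)$; the standard fact that solvable subgroups of torsion-free hyperbolic groups are cyclic then forces $K_y$ to be infinite cyclic or trivial. Hence every leaf $L_y=\widetilde\Sigma/K_y$ is an oriented cover of $\Sigma$ with cyclic (possibly trivial) deck group and is homeomorphic to $\R^2$ or to the open cylinder $\R\times S^1$, and in either case has finite geometric type. The degenerate genus $\leq 1$ cases are handled directly from the trivial or abelian structure of $\pi_1(\Sigma)$. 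The main obstacle I expect is justifying the triviality of the normal core; the cocompactness argument above is self-contained, but a cleaner alternative is to invoke directly the almost-malnormality of convex cocompact subgroups of cocompact Fuchsian groups.
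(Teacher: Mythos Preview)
Your proof is correct and follows the same two-step skeleton as the paper's: first establish that $h$ is injective, then use that point-stabilizers in $\PSL(2,\R)$ are solvable to force every $K_y$ to be trivial or infinite cyclic. The difference lies entirely in the first step. The paper dispatches injectivity by citing \cite{ADMV}: if $\Ker h\neq 1$, every leaf carries a homotopically nontrivial loop with trivial holonomy, and for minimal hyperbolic foliations this forces every leaf to have infinite geometric type. You instead give a self-contained geometric-group-theory argument: realize $\pi_1(\Sigma)$ as a cocompact Fuchsian group, observe that the finitely generated infinite-index subgroup $K_{x_0}$ is convex cocompact with proper limit set, and show its normal core is trivial via the invariant-convex-set trick, whence $\Ker h\subseteq N=\{1\}$. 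Your route trades a black-box citation for an explicit argument using only standard facts about Fuchsian groups; the paper's route is shorter but depends on the rather stronger dynamical result of \cite{ADMV}. For the second step the two proofs are essentially identical, the only cosmetic difference being that the paper phrases the conclusion as ``solvable free group $\Rightarrow$ cyclic'' while you phrase it as ``solvable subgroup of a torsion-free word-hyperbolic group $\Rightarrow$ cyclic''.
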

\begin{proof}
If $h$ is not injective, then every leaf has a loop which is nontrivial in homotopy but trivial in holonomy (corresponding to elements in $\Ker h$). Since the action is minimal, it follows, by \cite[Theorem 2]{ADMV}, that every leaf has infinite type. Thus, only suspensions of minimal faithful actions can have leaves with finite type. Since the action is projective, it is still true that stabilizers must be solvable groups. Every leaf is a noncompact surface by minimality, thus their fundamental groups are free. Let $x\in S^1$ and let be $L_x$ be the leaf of $\FF_h$ meeting $x$. Since $h$ is injective, this stabilizer is isomorphic with $h(\Stab_{h}(x))$  which is the stabilizer of $x$ by the action of the projective group $h(\pi_1(\Sigma))$, thus it is solvable. It follows, from Lemma~\ref{l:leaf topology suspension}, that the fundamental group of every leaf is isomorphic to a solvable free group and therefore it must be trivial or infinite cyclic. The trivial case corresponds with the plane and the cyclic one with the cylinder.
\end{proof}

\subsection{From torus to interval}

There are, however, many possibilities to realize a $C^0$ gluing map between the parabolic tori preserving the orbits of their trace foliations.

\begin{assumption}
We shall make some assumptions:
\begin{enumerate}
\item $\infty\in S^1$ is the fixed point of $\ell$. 

\item $\bar{\ell}=\ell$.
\end{enumerate}

Recall that both $\ell$ and $\bar{\ell}$ belong to the same projective conjugacy class. Thus, modulo a projective conjugation in the group $G^\triangleright$ or $G^\triangleleft$ (see Notation~\ref{n:block data}), our assumptions cover all the difficulties of the general case but simplifies the notations.

Although $\ell=\bar{\ell}$ are, formally, the same projective maps, they are considered as functions in different transverse circles, so the notation $\bar{\ell}$ will be still used in order to avoid confusion.
\end{assumption}

Let $x_0,y_0\in S^1\setminus\{\infty\}$ be arbitrary points. Clearly, $[x_0,\ell(x_0)]$ and $[y_0,\bar{\ell}(y_0)]$ are 
fundamental domains for the action of $\ell$ and $\bar{\ell}$ in $T^\triangleright\setminus\{\infty\}$ and $T^\triangleleft\setminus\{\infty\}$ respectively. Let $r: [x_0,\ell(x_0)]\to [y_0,\bar{\ell}(y_0)]$ be an orientation preserving homeomorphism, then the map
$$
c(x)=c_{r,x_0,y_0}(x)=
\begin{cases}
\infty&\ \text{if} \  x=\infty\\

\left(\bar{\ell}\right)^k\circ r \circ (\ell)^{-k}(x) \ \text{if} \  x\in[\ell^k(x_0),\ell^{k+1}(x_0)[&
\end{cases}
$$
defines a homeomorphism of the circle which maps the orbits of $\ell$ to orbits of $\bar{\ell}$. This is equivalent to $\bar{\ell}\circ c=c\circ\ell$. Moreover, every orbit preserving homeomorphism from $(T^\triangleright,\ell)$ to $(T^\triangleleft,\bar{\ell})$  is determined univocally by its restriction to a fundamental domain of $\ell$ as above. 

Of course, the circle map $c\equiv c_{r,x_0,y_0}$ can be used to define a torus transformation $$\Psi_{c}:(S^1\times T^\triangleright,\zeta_\triangleright)\to (S^1\times T^\triangleleft,\zeta_{\triangleleft})\;.$$ Therefore there is a bijective correspondence between admissible homeomorphisms $\Psi$ between parabolic boundary tori of foliated blocks and $\Homeo_+([x_0,\ell(x_0)],[y_0,\bar{\ell}(y_0)])$.

\begin{notation}\label{n:notation_c}
Fix $x_0$ and $y_0$ as $0$ in $\R\cup\{\infty\}$. Let $\cZ^0_+(\ell;0)$ denote the group of orientation preserving circle homeomorphisms that commute with $\ell$ and fix $0$. And let $\cZ^{1\ast}_+(\ell;0)$ be the family of circle homeomorphism $c\in\cZ^0_+(\ell;0)$ such that $c_{|[0,\ell(0)]}\in \Diff^1_+([0,\ell(0)])$ and it is tangent to the identity at its extreme points.

For each $c\in\cZ^{1\ast}_+(\ell;0)$, the corresponding torus homeomorphism between the parabolic boundary tori will be denoted by $\Psi_c$. Remark that every map $c\in\cZ^{1\ast}_{+}(\ell;0)$ will fix at least $\infty$ and the $\ell$-orbit of $0$.

Observe also that for any $x\in S^1\setminus\{\infty\}$ there exists a unique $k_x\in\Z$ so that $\ell^{k_x}(x)\in[0,\ell(0)[$, we shall denote the point $\ell^{k_x}(x)$ by $\overline{x}$. For future reference, we shall also define $k_\infty=0$ and $\overline{\infty}=\infty$. 
\end{notation}

\begin{rem}
Remark that there is a natural identification of $\cZ^{1\ast}_+(\ell;0)$ with a closed subset of $\Diff^1_+([0,\ell(0)])$, we shall endow $\cZ^{1\ast}_+(\ell;0)$ with a topology induced by the $C^1$-topology in $\Diff^1_+([0,\ell(0)])$ via this identification.
\end{rem}

\begin{rem}\label{r:derivative}
Observe that we are dealing with circle homeomorphisms as homeomorphisms in the projective line, this can cause confusion when derivatives are computed. The derivatives for circle maps are computed via a lift of $S^1=\R/\Z$ to $\R$, for any differentiable $f:\R/\Z\to\R/\Z$ its (usual) derivative will be denoted by $df/d\theta$ or simply by $d_\theta f$.

When $f$ is analitically defined in $\R\cup\{\infty\}$ and preserving $\infty$ (as it is done along this section) the notation $f'(x)$ for $x\in \R$, denotes its derivative as a function of the real line. The relation between both derivatives is the following: if $[0]\in\R/\Z$  represents the $\infty$ point and $p:\R/\Z\setminus\{[0]\}\to\R$ is the stereographic projection, then $d_\theta f(z)=(p^{-1}\circ f\circ p)'(z)$ for $z\in \R/\Z\setminus\{[0]\}$. 

Observe also that, when $0$ is a fixed point of $f:\R\cup\{\infty\}\to\R\cup\{\infty\}$, we have $d_\theta f(p^{-1}(0))=f'(0)$ (the stereographic projection from the north pole has no distorsion at the south pole, which is $p^{-1}(0)$).
\end{rem}

\begin{lemma}\label{l:regularity}
Every map in the group $\cZ^{1\ast}_+(\ell;0)$ is differentiable and its derivative (as circle maps) is uniformly bounded (in particular these are Lipschitz maps).
\end{lemma}
\begin{proof}
Let $c\in \cZ^{1\ast}_+(\ell;0)$. Since $c_{|[0,\ell(0)]}$ has derivative $1$ at $0$ and $\ell(0)$, and $\ell$ is parabolic fixing $\infty$ (so it is a translation in the projective line) it follows that $c$ is differentiable in $S^1\setminus\{\infty\}$.

Let us show that $c$ is differentiable at $\infty$. Observe that $c(\ell^n(0))=\ell^n(0)=n\cdot\ell(0)$ for all $n\in\Z$ by construction. The inversion $1/x$ defines the change of coordinates of the stereographic projection from the north to the south pole and maps $\infty$ to $0$.

Thus, by Remark~\ref{r:derivative}, $d_\theta c(\infty)=\lim\limits_{x\to\pm \infty}\dfrac{x}{c(x)}$ that clearly converges to $1$ since $\dfrac{n}{n+1}\leq \dfrac{x}{c(x)}\leq\dfrac{n+1}{n}$ for $x\in [\ell^n(0),\ell^{n+1}(0)[$ and $n\to\pm\infty$ as $x\to \pm\infty$.

Recall now that $\ell$ is an analytic circle diffeomorphism, in particular $C^2$. Let $\hat{c}$ be the restriction of $\varphi$ to $[0,\ell(0)[$ and let $x\in [\ell^{n}(0),\ell^{n+1}(0)[$ for $n\geq 0$. Set $y_i=\ell^{n-i}\circ{\hat{c}}(\overline{x})$ and $x_i= \ell^{n-i}(\overline{x})$, $1\leq i\leq n$. Thus 
$$
d_\theta c(p^{-1}(x))=d_\theta (\ell^n\circ \hat{c}\circ\ell^{-n})(p^{-1}(x))=d_\theta\hat{c}(p^{-1}(\bar{x}))\prod_{i=1}^n \dfrac{d_\theta\ell(p^{-1}(y_i))}{d_\theta\ell(p^{-1}(x_i))}= d_\theta\hat{c}(p^{-1}(\bar{x}))\dfrac{d_\theta\ell^n(p^{-1}(y_0))}{d_\theta\ell^n(p^{-1}(x_0))}\;.
$$

Recall that here the derivatives are computed as circle maps, not as transformations of the (projective) real line and $p$ is the stereographic projection (see Remark~\ref{r:derivative}). By the same argument as in \cite[Lemma 8.1.3]{Candel-Conlon-I-2000} there exists $\Theta>0$, defined as the quotient of the maximum of $d^2_\theta\ell$ and the minimum of $d_\theta\ell$ in $p^{-1}([0,\ell(0)])$ such that
$$
\dfrac{d_\theta\ell^n(\hat{c}(p^{-1}(\bar{x})))}{d_\theta\ell^n(p^{-1}(\bar{x}))}\leq \exp\left(\Theta\sum_{i=1}^n \left|p^{-1}(y_i)-p^{-1}(x_i)\right|\right)
$$ where $|a-b|$ denotes the arc distance between two points $a,b\in S^1$. Since each pair $x_i,y_i$ lies in $\ell^{n-i}([0,\ell(0)])$, and these domains are pairwise disjoint, it follows that $\sum_{i=1}^n|p^{-1}(y_i)-p^{-1}(x_i)|$ is uniformly bounded by the length of the circle.

A similar argument works for $n\leq 0$ (just by considering $\ell^{-1}$ instead of $\ell$). Therefore, the derivative of $c$ (as a circle diffeomorphism) is uniformly bounded and thus $c$ is Lipschitz as claimed.
\end{proof}

\subsection{Generic gluings}

The foliation obtained from $\FF^\triangleright$ and $\FF^\triangleleft$ by identifying the transverse parabolic tori via a homeomorphism $\Psi_c$ (defined in Notation 3.8) from an element $c\in\cZ^{0}_+(\ell;0)$ can also be considered as a suspension of a group action plus Dehn fillings. 

This suspension can be described as an amalgamated product of $G^\triangleright$ and $G^\triangleleft$  over $\Z$, this cyclic subgroup is generated by $h_\triangleright(\beta_\triangleright)=\ell=\overline{\ell}=h_\triangleleft(\beta_\triangleleft)^{-1}$. This induces a natural homomorphism,
$h: \pi_1(B^\triangleright)\ast_\Z \pi_1(B^\triangleleft)\to G^\triangleright\ast_{\langle\ell\rangle}G^\triangleleft$. Let $\Gamma^{\Join}_\ell=G^\triangleright\ast_{\langle\ell\rangle}G^\triangleleft$ and let us consider the homomorphism $\rho_{c,d}: \Gamma^{\Join}_\ell\to\Homeo_+(S^1)\,;$ depending on $c,d\in\cZ^0_+(\ell;0)$ and defined as

\[
\rho_{c,d}(g)=
\begin{cases}
c\circ g\circ c^{-1} &\ \text{if} \ g\in G^\triangleright\\

d^{-1}\circ g\circ d &\ \text{if} \ g\in G^\triangleleft\;.
\end{cases}
\]

The fact that $c$ and $d$ commute with $\ell$ guarantees that the above action is well defined in the amalgamated product.
The above is equivalent to consider the gluing map associated to the homeomorphism $\Psi_{d\circ c}$ between the parabolic boundary tori of these foliated blocks. 

\begin{defn}\label{d:F_c-d}
The homomorphism $\rho_{c,d}$ defines the left action $\rho_{c,d}(g)\cdot x:=\rho_{c,d}(g)^{-1}(x)$ for every $g\in \Gamma_\ell^{\Join}$. Let $B^{\Join}$ be the surface with boundary obtained from $B^\triangleright$ and $B^\triangleleft$ identifying the loops $\beta_\triangleright$ and $\beta_\triangleleft^{-1}$, preserving these orientations.

The foliation defined by the suspension of the homomorphism $\rho_{c,d}\circ h$ over $B^{\Join}$ will be denoted by $\FF_{c,d}$. Let $\FF^{\Join}_{c,d}$ be the foliation without boundary obtained by performing Dehn's fillings on all the elliptic boundary tori of $\FF_{c,d}$\footnote{Observe that $\FF_{c,d}$ have elliptic boundary components if and only if the singular locus of the tyical orbifold $O$ is nontrivial.}.
\end{defn}

\begin{rem}[Representation in normal form]\label{r:amalgalm}
Let $G_1\ast_A G_2$ be an amalgamated product between two groups $G_1,G_2$ over a common subgroup $A$. Let $S_1\subset G_1$ and $S_2\subset G_2$ be a full set of representatives of left cosets, i.e. $\# S_i\cap gA = 1$ for all $g\in G_i$, $i=1,2$. Assume that $1\in S_i$ (representing the coset $A$). Let ${\bf i}=(i_1,\dots, i_n)$ be an {\em alternating} multi-index, i.e. $i_j\in\{1,2\}$ and $i_{j+1}\neq i_j$ for all $j$. A {\em reduced word}\footnote{It is standard to choose these reduced words by representatives of right cosets instead of left ones. Observe that a reduced word by left cosets is transformed in a reduced word by right cosets via inversion.} of type $\bf{i}$ is any family $(s_1,\dots, s_n;a)$ such that $a\in A$, $s_j\in S_{i_j}$ and $s_j\neq 1$ for all $j$.

It is well known, see e.g. \cite[Theorem 1]{Serre}, that every $g\in G_1\ast_A G_2$ admits a unique reduced word such that $g= s_1\cdots s_n\cdot a$. Moreover, if ${\bf j}=(j_1,\dots, j_m)$ is an alternating multindex such that $g=t_1\cdots t_m$  and $t_{i}\in G_{j_i}\setminus A$ for $1\leq i\leq m$, then $\bf{i}=\bf{j}$ and there exists $a_i\in A$, for $0\leq i\leq n$ with $a_0=1$ such that $s_i =a_{i-1}^{-1} t_i a_i$ for all $1\leq i\leq n$. This can be easily proved recursively and is also a consequence of \cite[Remark to Theorem 1]{Serre}. Any of the representations as above will be called {\em normal representations} of $g$. Let $|g|$ denote the length of the multi-index associated to any normal representation of $g$.  In this work we have $G_1=G^\triangleright$, $G_2=G^\triangleleft$ and $A=\langle \ell\rangle$.
\end{rem}

\begin{notation}\label{n:notation 1}
In the following for $w\in \Gamma^{\Join}_\ell$ we set $w_{c,d}=\rho_{c,d}(w)^{-1}$. 

As we showed in Subsection 3.1, $\cZ^0_+(\ell;0)$ is in natural correspondence with $\Homeo_+[0,\ell(0)]$, this bijection is also a homeomorphism relative to the $C^0$-topologies on these sets.
\end{notation}

\begin{lemma}\label{l:perturbation}
Let $f\in\Homeo_+(S^1)$ and let $\ell: S^1\to S^1$ be a circle parabolic homeomorphism fixing $\infty\in S^1$. Let $x\in S^1\setminus\{\infty\}$ and set $y=f(x)$, suppose that $0\neq\overline{x}\neq\overline{y}\neq 0$. For any neighborhood $V$ of $\overline{x}$, there exists $\varphi\in\cZ^{1\ast}_+(\ell;0)$, arbitrarily close to the identity in the $C^0$-topology such that its restriction $\varphi_{|[0,\ell(0)]}$ is supported in $V$, and $\varphi^{-1}\circ f\circ\varphi(x)\neq y$.
\end{lemma}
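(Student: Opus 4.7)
The plan is to exploit the bijection of Notation~\ref{n:notation 1} between $\cZ^0_+(\ell;0)$ and $\Homeo_+([0,\ell(0)])$: any endpoint-fixing $\hat\varphi$ extends uniquely by the equivariance rule $\varphi(\ell^k(t))=\ell^k(\hat\varphi(t))$, $t\in[0,\ell(0))$, $k\in\Z$, together with $\varphi(\infty)=\infty$. I would set $\hat\varphi=\id+\psi$ for a small smooth bump $\psi$ supported in a suitably shrunk $V$, and check that the resulting $\varphi$ meets all the regularity and perturbation requirements.

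Since $\overline x\in(0,\ell(0))$ and $\overline x\neq\overline y$ by hypothesis, shrink $V$ so that $\overline V$ is contained in $(0,\ell(0))\setminus\{\overline y\}$. Fix a $C^\infty$ bump $\psi$ supported in $V$ with $\psi(\overline x)\neq 0$ and $\|\psi\|_{C^1}<\varepsilon$ for some $\varepsilon\in(0,1)$; then $\hat\varphi$ is a smooth homeomorphism of $[0,\ell(0)]$ fixing the endpoints and equal to the identity outside $V$. Its equivariant extension $\varphi$ is $C^\infty$ on $S^1\setminus\{\infty\}$, since $\hat\varphi$ is the identity near $0$ and $\ell(0)$ and hence glues smoothly across the boundaries of all translated fundamental domains.

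For the regularity at $\infty$, fix a projective coordinate $t$ on $S^1\setminus\{\infty\}$ in which $\ell(t)=t+1$ (possible since $\ell$ is parabolic with fixed point $\infty$) and use the chart $s=1/t$ at $\infty$. Writing $\varphi(t)=t+\psi(\overline t)$ with $\overline t\in[0,1)$, a direct computation yields
\[
\tilde\varphi(s)=\frac{s}{1+s\,\psi(\overline{1/s})}=s+O(s^2),\qquad \tilde\varphi'(s)=\frac{1+\psi'(\overline{1/s})}{\bigl(1+s\,\psi(\overline{1/s})\bigr)^2},
\]
both bounded near $s=0$, so $\varphi$ is Lipschitz on $S^1$ and derivable at $\infty$ with derivative $1$. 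Closeness of $\varphi$ to $\id$ in the $C^0$ topology is controlled by $\|\psi\|_\infty\leq\varepsilon$.

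Finally, by our choice of $V$, $\overline y\notin\overline V$, so $\hat\varphi(\overline y)=\overline y$ and hence $\varphi(y)=y$. If $\varphi^{-1}(f(\varphi(x)))=y$, then $f(\varphi(x))=\varphi(y)=y=f(x)$, forcing $\varphi(x)=x$ by injectivity of $f$, i.e. $\psi(\overline x)=0$, contradicting the choice of $\psi$. Hence $\varphi^{-1}\circ f\circ \varphi(x)\neq y$, as required. The main technical obstacle is the regularity check at $\infty$: translating the boundedness and smoothness of the $\ell$-periodic bump $\psi$ in the linearizing coordinate into Lipschitz continuity and pointwise differentiability in the chart $s=1/t$ requires the specific parabolic normal form of $\ell$; all the remaining steps are standard bump-function manipulations once $V$ has been shrunk to separate $\overline x$ from $\overline y$.
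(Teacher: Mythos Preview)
Your proof is correct and follows essentially the same strategy as the paper: build $\varphi$ as the $\ell$-equivariant extension of a compactly supported bump perturbation of the identity on the fundamental domain $[0,\ell(0)]$, then verify the regularity at $\infty$ via the chart $s=1/t$. Two minor differences are worth noting. First, your inequality argument is slightly cleaner than the paper's: you observe directly that $\varphi(y)=y$ (since $\overline y\notin V$) and then use injectivity of $f$, whereas the paper tracks the conjugated map $\ell^{k_y}\circ f\circ\ell^{-k_x}$ and argues via disjointness of a small interval from its image. Second, the paper packages the perturbation as a flow $\phi_t$ rather than a single bump; this is not needed for the lemma itself but is exploited in the subsequent Corollary~\ref{c:perturbation}, where a one-parameter family is essential to handle the case $\overline x=\overline y$. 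For the Lipschitz estimate the paper invokes $C^2$ distortion control of $\ell$, which is more general than your use of the explicit normal form $\ell(t)=t+1$, but in the projective setting both are available and either suffices.
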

\begin{proof}
Take $\varepsilon>0$ so that $]x-\varepsilon,x+\varepsilon[$ is  included in $S^1\setminus\{\ell^k(0)\mid\ k\in\Z\}$ and is disjoint from $f(]x-\varepsilon,x+\varepsilon[)$. It exists since $f$ is continuous, $\overline{x}\neq 0$ and $y\neq x$.

We have that $\ell^{k_y}\circ f\circ \ell^{-k_x}(\overline{x})=\overline{y}$. Since $\overline{y}\neq 0$, there exists $0<\delta<\varepsilon$ such that 
\[
]\overline{x}-\delta,\overline{x}+\delta[\subset V\cap]0,\ell(0)[
\] 
and is disjoint with 
\[
\ell^{k_y}\circ f\circ \ell^{-k_x}(]\overline{x}-\delta,\overline{x}+\delta[)
\] that follows by continuity and the fact that $\overline{x}\neq\overline{y}$.

Let $b: [0,\ell(0)]\to [0,1]$ be a $C^\infty$ nonnegative function which is zero exactly at points in $[0,\ell(0)]\setminus ]\overline{x}-\delta,\overline{x}+\delta[$ and it is $C^\infty$-close to the constant zero map. Let $b(t)\frac{\partial}{\partial t}$ be the resulting vector field in $[0,\ell(0)]$ and let $\psi_t: [0,\ell(0)]\to[0,\ell(0)]$ its corresponding flow. That is a $1$-parameter family of $C^\infty$ diffeomorphisms. Since $b$ is supported in a open subset of the interior of $[0,\ell(0)]$ it follows that $\psi_t$ is $C^\infty$ tangent to the identity in the extreme points for every $t\in\R$.

The flow $\phi_t: S^1\to S^1$ defined by $\phi_t(\infty)=\infty$ and $\phi_t(z)=\ell^{-k_z}\circ \psi_t\circ \ell^{k_z}(z)$,  for $z\in S^1\setminus\{\infty\}$, belongs to $\cZ^{1\ast}_+(\ell;0)$ for all $t\in\R$. It is also clear that, for all $t\neq 0$, $$\phi_{-t}\circ \ell^{k_y}\circ f\circ \ell^{-k_x}\circ\phi_{t}(\overline{x})\neq\overline{y}\;,$$ since, for every 
$t\neq 0$, \[\phi_{t}(\overline{x})\in]\overline{x}-\delta,\overline{x}+\delta[\setminus\{\overline{x}\}\] and thus, from $\phi_t(\overline x)\neq \overline x$, \[\ell^{k_y}\circ f\circ \ell^{-k_x}(\phi_{t}(\overline{x}))\notin ]\overline{x}-\delta,\overline{x}+\delta[\cup\{\overline{y}\}\,.\] 
Since the flow $\phi$ is supported in $]\overline{x}-\delta,\overline{x}+\delta[$ we have that $\phi_{-t}(\ell^{k_y}\circ f\circ \ell^{-k_x}\circ\phi_{t}(\overline{x}))=\ell^{k_y}\circ f\circ \ell^{-k_x}\circ\phi_{t}(\overline{x})\neq \overline{y}$. Therefore
\begin{align*}
&\phi_{-t}\circ f\circ\phi_t(x)=\phi_{-t}\circ f\circ \ell^{-k_x}\circ\phi_t(\overline{x})=\\
\ell^{-k_y}&\left(\phi_{-t}\circ \ell^{k_y}\circ f\circ \ell^{-k_x}\circ\phi_{t}(\overline{x})\right)\neq \ell^{-k_y}(\overline{y})=y\;.
\end{align*}

Therefore, choosing $\varphi=\phi_t$ (any choice of $t\neq 0$ sufficiently close to $0$ works), we obtain a homeomorphism satisfying all the required conditions.
\end{proof}

\begin{cor}\label{c:perturbation}
Let $f\in\Homeo_+(S^1)$, $x_0\in S^1\setminus\{\infty\}$ and let $\ell: S^1\to S^1$ be a circle parabolic homeomorphism fixing $\infty\in S^1$. Suppose that $\overline{x}_0\neq 0$, $f(x_0)=x_0$ and $f$ is not a germ of the identity at $x_0$. Let $V$ be an open neighborhood  of  $\bar{x}_0$. There exists $\varphi\in\cZ^{1\ast}_+(\ell;0)$, such that its restriction $\varphi_{|[0,l(0)]}$ is supported in $V$ and $\varphi^{-1}\circ f\circ\varphi(x_0)\neq x_0$.
\end{cor}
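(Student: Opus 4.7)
The plan is to reduce the fixed-point case $f(x_0)=x_0$ to the distinct-point situation handled by Lemma~\ref{l:perturbation}, using the hypothesis that $f$ is not a germ of the identity at $x_0$. The idea is to design $\varphi$ so that it slides $x_0$ onto a nearby point $x_1$ at which $f$ is not the identity; then $f$ displaces $x_1$, and since $\varphi$ is a homeomorphism, $\varphi^{-1}(f(\varphi(x_0)))=\varphi^{-1}(f(x_1))\neq x_0 = \varphi^{-1}(x_1)$ automatically.

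First I would select $x_1 \in V$ close to $x_0$ with $f(x_1)\neq x_1$, $\overline{x}_1 \neq 0$, and $k_{x_1}=k_{x_0}$. Such an $x_1$ exists: the hypothesis that $f$ is not a germ of the identity at $x_0$ produces points where $f$ acts nontrivially in every neighborhood of $x_0$, and $\overline{x}_0 \neq 0$ guarantees that $x_0$ lies in the interior of its $\ell$-fundamental domain, so all sufficiently close points share the same index $k_{x_0}$ and reduce to a nonzero point of $[0,\ell(0))$.

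Next I would construct $\hat\varphi \in \Diff_+^\infty([0,\ell(0)])$ supported in a small open interval $I \Subset (0,\ell(0))$ containing both $\overline{x}_0$ and $\overline{x}_1$ (with $\ell^{-k_{x_0}}(I)\subset V$), $C^0$-close to the identity, $C^1$-tangent to the identity at both endpoints of $[0,\ell(0)]$, and satisfying $\hat\varphi(\overline{x}_0)=\overline{x}_1$. This is a routine time-one-flow construction on $I$. The globalization is the same as in Lemma~\ref{l:perturbation}: set $\varphi(\infty)=\infty$ and $\varphi(z)=\ell^{-k_z}\circ\hat\varphi\circ\ell^{k_z}(z)$ elsewhere. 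All the regularity claims---Lipschitz on $S^1$, $C^\infty$ off $\infty$, derivable at $\infty$, and hence membership in $\cZ^{1\ast}_+(\ell;0)$---follow from precisely the analytic parabolic distortion estimate already used in Lemma~\ref{l:perturbation}, together with the endpoint tangency of $\hat\varphi$.

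The verification is then direct: because $k_{x_1}=k_{x_0}$, we have $\varphi(x_0)=\ell^{-k_{x_0}}(\hat\varphi(\overline{x}_0))=\ell^{-k_{x_0}}(\overline{x}_1)=x_1$, so $\varphi^{-1}(f(\varphi(x_0)))=\varphi^{-1}(f(x_1))$ can equal $x_0$ only if $f(x_1)=\varphi(x_0)=x_1$, contradicting the choice of $x_1$. The main obstacle is purely bookkeeping: simultaneously arranging the regularity, the support condition, the $C^0$-closeness to the identity, and the prescription $\hat\varphi(\overline{x}_0)=\overline{x}_1$. This is no harder than in Lemma~\ref{l:perturbation}; the only new input is that the ``forbidden target'' $\overline{y}$ from that lemma is played here by $\overline{x}_0$ itself, and the non-germ-of-identity hypothesis on $f$ at $x_0$ is exactly what supplies the nearby non-fixed target $x_1$ to aim at.
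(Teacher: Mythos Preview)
Your argument is correct and in fact cleaner than the paper's. Both proofs implement the same underlying idea---slide $x_0$ to a nearby non-fixed point of $f$ and observe that $\varphi^{-1}f\varphi(x_0)\neq x_0$ is then automatic---but the paper takes a detour: it factors $f=f_2\circ f_1$ through an auxiliary homeomorphism $f_1$ (e.g.\ a rotation) sending $x_0$ to some $x_1$ with $\overline{x}_1\neq\overline{x}_0$, applies the flow $\phi_t$ of Lemma~\ref{l:perturbation} to $f_1$, and then selects a sequence $t_n\to 0$ for which $f(\phi_{t_n}(x_0))\neq\phi_{t_n}(x_0)$ using the non-germ-of-identity hypothesis. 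Your approach reverses the order of choices: first pick the nearby non-fixed point $x_1$ directly from the non-germ hypothesis, then tailor $\hat\varphi$ so that $\varphi(x_0)=x_1$. This avoids the artificial decomposition $f=f_2\circ f_1$ entirely and makes the final verification a one-line injectivity argument. The paper's route has the minor advantage of literally reusing the flow already constructed in Lemma~\ref{l:perturbation}, whereas you must redo the (routine) time-one-flow construction; conversely, your route makes the logical dependence on the hypothesis ``$f$ is not a germ of $\id$ at $x_0$'' completely transparent.
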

\begin{proof}
Without loss of generality, assume that there exists $f_1,f_2\in \Homeo_+(S^1)$, so that $f=f_2\circ f_1$, $x_1=f_1(x_{0})\neq \infty$ and $0\neq\overline{x}_1\neq \overline{x}_0$. Just take $f_1$ to be any homeomorphism mapping $x_0$ to an element $x_1$ with the desired properties (for instance a rotation) and set $f_2=f\circ f_1^{-1}$.

 Observe first that $f_1$, $\overline{x}_0$ and $\overline{x}_1$ satisfy the hypotheses of Lemma~\ref{l:perturbation}. Thus, we can use the same flow $\phi$ used in the proof of that Lemma such that $\phi_{|[0,\ell(0)]}$ is supported  in a neighborhood of $\bar{x}_0$ inside $V$ that does not meet $\overline{x}_1$ and $\phi_{-t}\circ f_1\circ\phi_t(x_0)= f_1\circ \phi_t(x_0)\neq x_1$ for all $t\in\R\setminus\{0\}$. 

Set $y_t = f_1\circ\phi_t(x_0)$, the set $Y=\{y_t\mid\ t\in\R\}$ is an open neighborhood of $x_1$. Since $f$ is not a germ of the identity it follows that, for a sequence of points $t_n$ converging to $0$ as $n\to\infty$, we have that $f_2(y_{t_n})\neq \phi_{t_n}(x_0)$ and therefore
\begin{align*}
\phi_{-t_n}\circ f\circ \phi_{t_n}(x_0)&=\phi_{-t_n}\circ f_2\circ f_1\circ \phi_{t_n}(x_0)=\\
\phi_{-t_n}\circ f_2(y_{t_n})&\neq \phi_{-t_n}(\phi_{t_n}(x_0))=x_0
\end{align*}
\end{proof}

\subsection{Predefined stabilizers}

\begin{defn}Let $\textbf{\texttt{a}}=\{a_n\}_{n\in\Z}$ and $\textbf{\texttt{b}}=\{b_n\}_{n\in\Z}$ be 
two increasing bisequences\footnote{i.e., a sequence indexed in $\Z$.} of points in $]0,\ell(0)[$. It is said that $\textbf{\texttt{a}}$ and $\textbf{\texttt{b}}$ are {\em synchronized} if 
\[
\lim_{n\to\infty} a_n=\lim_{n\to\infty} b_n=  \ell(0)\;, \  
\lim_{n\to -\infty}a_n=\lim_{n\to -\infty}b_n= 0\;.
\]
and
\[
\lim_{n\to\infty} \dfrac{b_n-\ell(0)}{a_n-\ell(0)}=1=\lim_{n\to -\infty}\dfrac{b_n-0}{a_n-0}\;.
\]
\end{defn}

\begin{defn}
Let $\textbf{\texttt{a}},\textbf{\texttt{b}}$ be synchronized bisequences of $[0,\ell(0)]$. A diffeomorphism $f\in\Diff^1_+([0,\ell(0)])$ is called $(\textbf{\texttt{a}},\textbf{\texttt{b}})$-{\em diffeomorphism\/} if $f(a_n)=b_n$ for all $n\in\Z$. The family of $(\textbf{\texttt{a}},\textbf{\texttt{b}})$-diffeomorphisms of $[0,\ell(0)]$ is denoted by $\Diff^1_+(\ell;0;\textbf{\texttt{a}},\textbf{\texttt{b}})$. This is a closed (and nonempty) subspace of $\Diff^1_+([0,\ell(0)])$ and therefore it is completely metrizable.

Observe that an $(\textbf{\texttt{a}},\textbf{\texttt{b}})$-diffeomorphism is tangent at the identity in its extreme points, this is a direct consequence of the definition of synchronized sequences.

Let $\cZ^{1\ast}_+(\ell;0;\textbf{\texttt{a}},\textbf{\texttt{b}})$ denote the subfamily of homeomorphisms $c\in\cZ^{1\ast}_+(\ell;0)$ such that $c_{|[0,\ell(0)]}\in \Diff^1_+(\ell;0;\textbf{\texttt{a}},\textbf{\texttt{b}})$.
\end{defn}

\begin{rem}
Let $\textbf{\texttt{c}}=\{c_n\}_{n\in\Z}$ be another increasing bisequence synchronized with $\textbf{\texttt{a}}$ and $\textbf{\texttt{b}}$. Observe that, if $c\in \cZ^{1\ast}_+(\ell;0;\textbf{\texttt{a}},\textbf{\texttt{b}})$ and $d\in \cZ^{1\ast}_+(\ell;0;\textbf{\texttt{b}},\textbf{\texttt{c}})$, then $d\circ c\in\cZ^{1\ast}_ +(\ell;0;\textbf{\texttt{a}},\textbf{\texttt{c}})$.
\end{rem}

In \cite[Proof of Proposition 4.5]{Ghys} it is shown that, for all $r\geq 0$, there exists a residual set $\Omega\subset \Diff^r(S^1)\times\Diff^r(S^1)\times S^1$ so that the action of the group $\langle f,g\rangle$ on $x$ is free for all $(f,g,x)\in\Omega$, i.e., only the identity fixes some point in the orbit of $x$ for that group. This proof can be readily adapted to our set up in order to show that our described minimal actions are generically faithful and free in restriction to a residual set.

\begin{lemma}\label{l:generic 2}
For any triple of synchronized bisequences $\textbf{\texttt{a}},\textbf{\texttt{b}}, \textbf{\texttt{c}}$, there exists a residual set $\Omega\subset \cZ^{1\ast}_+(\ell;0;\textbf{\texttt{a}},\textbf{\texttt{b}})\times \cZ^{1\ast}_+(\ell;0;\textbf{\texttt{b}},\textbf{\texttt{c}})\times S^1$ such that for every 
$(c,d,x)\in\Omega$ the action of $\rho_{c,d}$ in the orbit of $x$ is free, in particular $\rho_{c,d}$ is faithful.
\end{lemma}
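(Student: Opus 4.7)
The plan is a Baire category argument in the spirit of \cite[Proposition 4.5]{Ghys}, adapted to the amalgamated setting. Since both $G^\triangleright$ and $G^\triangleleft$ are finitely generated (Proposition~\ref{p:hyperbolic structure}), the amalgamated product $\Gamma^\Join_\ell$ is countable. I would enumerate its nontrivial elements as $\{w_n\}_{n\in\N}$ and set
\[
E_n=\{(c,d,x)\mid \rho_{c,d}(w_n)(x)=x\}
\]
inside the product $\cZ^{1\ast}_+(\ell;0;\textbf{\texttt{a}},\textbf{\texttt{b}})\times\cZ^{1\ast}_+(\ell;0;\textbf{\texttt{b}},\textbf{\texttt{c}})\times S^1$. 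Each $E_n$ is closed, since $(c,d)\mapsto \rho_{c,d}(w_n)$ is continuous ($w_n$ being a fixed word in the generators) and evaluation is continuous. It then suffices to show each $E_n$ has empty interior, for then $\Omega=\bigcap_n(E_n)^c$ is residual and any $(c,d,x)\in\Omega$ has trivial stabilizer in $\rho_{c,d}$.

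Fix $(c_0,d_0,x_0)\in E_n$ and a neighborhood $U$, and set $f=\rho_{c_0,d_0}(w_n)$. If $f$ is not locally the identity at $x_0$, its fixed-point set is nowhere dense near $x_0$ and a tiny displacement of $x_0$ alone escapes $E_n$. If $w_n\in\langle\ell\rangle\setminus\{1\}$, then $f=\ell^m$ is parabolic with unique fixed point $\infty$, so $E_n=\cZ^{1\ast}_+\times\cZ^{1\ast}_+\times\{\infty\}$ has empty interior. The substantive case is therefore $w_n$ of positive amalgamated length with $f\equiv\id$ on an open neighborhood $V$ of $x_0$.

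For this case, I would write $w_n$ in its normal form $w_n=s_k\cdot s_{k-1}\cdots s_1\cdot a$ with $k\geq 1$, $a\in\langle\ell\rangle$, and the $s_j$ alternating between $G^\triangleright\setminus\langle\ell\rangle$ and $\varsigma^{-1}G^\triangleleft\varsigma\setminus\langle\ell\rangle$ (Remark~\ref{r:amalgalm}), and track the intermediate points $y_0=\rho_{c_0,d_0}(a)(x)$, $y_j=\rho_{c_0,d_0}(s_j)(y_{j-1})$, so that $y_k=x$. Since $f\equiv\id$ on $V$, any movement of $x$ inside $V$ stays in $E_n$; I first use this slack to replace $x_0$ by a point $x\in V$ at which $y_0,\dots,y_{k-1}$ are pairwise distinct, different from $\infty$, and whose reductions $\overline{y_j}\in\,]0,\ell(0)[$ avoid the countable set $\{\overline{a_n},\overline{b_n},\overline{c_n}\mid n\in\Z\}$, which is a residual condition in $V$. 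Then I pick $i_0$ so that $s_{i_0}\in G^\triangleright$ (the other case is symmetric) and use the flow construction from the proof of Lemma~\ref{l:perturbation} to produce $\varphi=\phi_t\in\cZ^0_+(\ell;0)$, $t\neq 0$ small, supported on a small neighborhood $W\ni\overline{y_{i_0-1}}$ disjoint from all other $\overline{y_j}$ and from every $\overline{a_n},\overline{b_n},\overline{c_n}$, and moving $\overline{y_{i_0-1}}$. Replacing $c_0$ by $\varphi\circ c_0$, the composition remains in $\cZ^{1\ast}_+(\ell;0;\textbf{\texttt{a}},\textbf{\texttt{b}})$ because $\varphi$ equals the identity near $0$ and $\ell(0)$ (by disjointness of support) and fixes every $\overline{b_n}$; and because $W$ is disjoint from all other $\overline{y_j}$, the remaining factors of the product $\rho_{c,d}(w_n)$ act unchanged on the same intermediate points, while the $i_0$-th factor is conjugated by $\varphi$ and sends $y_{i_0-1}$ to a nearby but distinct point. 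The perturbed orbit then arrives at $y'_k\neq x$, so $(c,d,x)\notin E_n$.

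The hardest part will be guaranteeing that the local perturbation at step $i_0$ does not cancel through the remaining factors and inadvertently return the orbit to $x$. This is controlled by the preliminary genericity of $x$ inside $V$, which separates the intermediate orbit points from one another and from the matching data, so that the support of $\varphi$ affects exactly one factor of the amalgamated product. Combined with the smoothness and locality properties of $\phi_t$, the one-parameter family of perturbations produces an $\cZ^{1\ast}_+$-continuous path along which the endpoint of the orbit moves off $x$ for every $t\neq 0$ sufficiently small; this yields that $E_n$ has empty interior and completes the Baire argument.
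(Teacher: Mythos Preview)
Your overall architecture---Baire category on the triple product, with closed ``bad'' sets $E_n$ indexed by nontrivial words---matches the paper's. The gap is in your justification that the intermediate points $y_0,\dots,y_{k-1}$ can be made to have pairwise distinct reductions $\overline{y_j}$ by moving $x$ inside $V$. You assert this is ``a residual condition in $V$'', but for the \emph{fixed} pair $(c_0,d_0)$ it need not be: if $\overline{y_i(x)}=\overline{y_j(x)}$ for every $x$ in an open set, then some shorter nontrivial word $u=\ell^{-m}\cdot s_j\cdots s_{i+1}$ satisfies $\rho_{c_0,d_0}(u)\equiv\id$ on an open set, and nothing you have said rules this out. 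Without distinctness, the support of your perturbation $\varphi$ cannot be isolated from the other $\overline{y_j}$, and the rest of the argument collapses.

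The paper closes this gap by a minimality device you are missing: one assumes for contradiction that some $X_w$ has nonempty interior and takes $v$ of \emph{minimal} normal length with this property. Then every shorter $u$ has $X_u$ closed with empty interior, and so do the translates $\varphi_w(X_u)=\{(c,d,x):u_{c,d}(w_{c,d}(x))=w_{c,d}(x)\}$. Choosing $(c,d,x)$ in the interior of $X_v$ but outside the meager union $\bigcup_{|u|<|v|}\varphi_w(X_u)$ now \emph{forces} the $\overline{x_i}$ to be pairwise distinct (any coincidence would place $(c,d,x)$ in one of the excluded sets). With distinctness in hand, the paper then perturbs at the \emph{last} letter $s_{|v|}$, which avoids having to track the perturbed orbit through subsequent factors; your choice of an arbitrary $i_0$ can also be made to work, but requires exactly the care you defer to the final paragraph. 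The essential missing idea is the minimal-length (equivalently, induction on normal length) argument that produces the distinctness of the intermediate points.
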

\begin{proof}
Since each factor is completely metrizable, it follows that $\cZ^{1\ast}_+(\ell;0;\textbf{\texttt{a}},\textbf{\texttt{b}})\times \cZ^{1\ast}_+(\ell;0;\textbf{\texttt{b}},\textbf{\texttt{c}})\times S^1$ is a Baire space. For $w\in \Gamma_{\ell}^{\Join}$, set $w_{c,d}=\rho_{c,d}({w})^{-1}$ (see Notation~\ref{n:notation 1}). Let us consider the sets
\[
X_w=\{(c,d,x)\in \cZ^{1\ast}_+(\ell;0;\textbf{\texttt{a}},\textbf{\texttt{b}})\times \cZ^{1\ast}_+(\ell;0;\textbf{\texttt{b}},\textbf{\texttt{c}})\times S^1\mid\ w_{c,d}(x)=x\}
\] for 
$w\in \Gamma_\ell^{\Join}\setminus\{\id\}$. It is clear that each $X_w$ is closed; therefore, if they also have empty interior, it will follow that 
\[
\Omega= \left(\cZ^{1\ast}_+(\ell;0;\textbf{\texttt{a}},\textbf{\texttt{b}})\times \cZ^{1\ast}_+(\ell;0;\textbf{\texttt{b}},\textbf{\texttt{c}})\times S^1\right)\setminus \bigcup_{w} X_w
\] 
is a residual set satisfying the required conditions.

Let us assume that $X_w$ has nonempty interior for some $w$. Let $v$ be a minimal element of $\Gamma^{\Join}_\ell\setminus\{\id\}$ relative to its length in normal form $|v|$ so that $X_v$ has nonempty interior.

Observe that the restriction of $\rho_{c,d}$ to each $G^\star$ is conjugated to the original projective action of $G^\star$, these are faithful actions by construction and their elements do not admit identity germs at any point by analyticity. It follows that $|v|>1$.

Let
\[\varphi_w: \cZ^{1\ast}_+(\ell;0;\textbf{\texttt{a}},\textbf{\texttt{b}})\times \cZ^{1\ast}_+(\ell;0;\textbf{\texttt{b}},\textbf{\texttt{c}})\times S^1\to\cZ^{1\ast}_+(\ell;0;\textbf{\texttt{a}},\textbf{\texttt{b}})\times \cZ^{1\ast}_+(\ell;0;\textbf{\texttt{b}},\textbf{\texttt{c}})\times S^1
\]
be the homeomorphism defined by $\varphi_w(c,d,x)=(c,d,w^{-1}_{c,d}(x))$. Therefore $\varphi_{w}(X_{u})$ is closed with empty interior for any $u$ with $|u|<|v|$ and any $w\in \Gamma_\ell^{\Join}\setminus\{\id\}$. It is also clear that 

\[\varphi_{w}(X_u)=\{(c,d,x)\in \cZ^{1\ast}_+(\ell;0;\textbf{\texttt{a}},\textbf{\texttt{b}})\times \cZ^{1\ast}_+(\ell;0;\textbf{\texttt{b}},\textbf{\texttt{c}})\times S^1\mid\ u_{c,d}\circ w_{c,d}(x)=w_{c,d}(x))\}\;.\] 

Let $U$ be a nonempty open subset of $X_v$ and choose $( c, d, x)\in U$ so that it does not belong to any $\varphi_{w}(X_{u})$ for $0<|u|<|v|$. Such an element exists because countably many sets with empty interior cannot cover $U$ in a Baire space.
\begin{afirm}\label{cl:property}
Choose $z\in S^1$ arbitrary. There exists $(c,d,x)\in U$ such that  $(c,d,x)\notin \varphi_w(X_u)$, for  $0<|u|<|v|$ and $x$ does not belong to the $\rho_{c,d}$ orbit of $z$.
\end{afirm}
\begin{proof}[Proof of Claim~\ref{cl:property}]
Define, for each $w\in\Gamma^{\Join}_{\ell}$, $$Y_{z,w}=\{(e,f,w_{e,f}(z))\mid (e,f)\in\cZ^{1\ast}_+(\ell;0;\textbf{\texttt{a}},\textbf{\texttt{b}})\times \cZ^{1\ast}_+(\ell;0;\textbf{\texttt{b}},\textbf{\texttt{c}})\}$$ and $Y_z=\bigcup_w Y_{z,w}$. It is clear that each $Y_{z,w}$ is closed with empty interior. Therefore $Y_z$ is meager and we can choose $(c,d,x)\in U$ such that $(c,d,x)$ does not belong to $Y_z$ nor $\varphi_w(X_u)$ for any $w\in\Gamma^{\Join}_\ell$, $0<|u|<|v|$ .
\end{proof}

Let $v=s_{1}\cdots s_{|v|}$ be a normal representation of $v$. Thus
\[v_{c,d}= f_{|v|}\circ\cdots\circ f_2\circ f_1\,,\] where $f_i =\rho_{c,d}(s_i)^{-1}$ for all $i$.

Let $x_0=x$ and define recursively $x_i=f_i(x_{i-1})$ for $1< i\leq |v|$. Since $(c,d, x)\in X_v$, it follows that $x_{|v|}=x_0$.

Without loss of generality we can assume also that $x_i\neq\infty$  for all $i$ and $0\neq\overline{x}_i\neq b_n$ for all $n\in\Z$. This comes from the previous Claim~\ref{cl:property} applied to $\infty$, $0$ and $b_n$, $n\in\N$.

For simplicity, let us denote $k_i=k_{x_i}$ (defined in Notation~\ref{n:notation 1}). We claim that $\overline{x}_0,\overline{x}_1\dots,\overline{x}_{|v|-1}$ are pairwise different. Otherwise, if $\ell^{k_{i}}(x_i)=\ell^{k_{j}}(x_j)$ for $i<j$, then we can take $u=s_{i+1}\cdots s_{j-1}\cdot(s_j\cdot \ell^{k_j-k_i})$ and $w=s_1\cdots s_i$. These are also normal representations and therefore $|w|$ and $|u|$ are strictly lower than $|v|$. Thus $u_{c,d}(w_{c,d}(x))=(w_{c,d}(x))$ and this contradicts the choice of $( c, d, x)$.

Thus every $\overline{x}_i$, $0\leq i\leq |v|-1$, belongs to $]0,\ell(0)[\,\setminus\{b_n\mid n\in\Z\}$ and are pairwise distinct. Recall that $f_{|v|}\left(x_{|v|-1}\right)=x_{|v|}=x_0$.
 
By means of Lemma~\ref{l:perturbation} there exists $\varphi\in\cZ_+^{1\ast}(\ell;0)$, where $\varphi_{|[0,\ell(0)]}$ is supported in a neighborhood of $x_{|v|-1}$ in $]0,\ell(0)[\,\setminus\{b_n\mid\ n\in\Z\}$ that does not contain any $\overline{x}_i$ for $0\leq i<|v|-1$ and such that $\varphi^{-1}\circ f_{|v|}\circ \varphi(x_{|v|-1})\neq x_{|v|}$. Observe also that $\varphi(b_n)=b_n$ for all $n$ and therefore $\varphi\in \cZ^{1\ast}_+(\ell;0;\textbf{\texttt{b}},\textbf{\texttt{b}})$.

If $s_{|v|}\in G^\triangleright$ then, we choose the perturbation: 
\[e=\varphi^{-1}\circ c\,,\ f=d\;.\]
otherwise choose 
\[e:= c\,,\ f:=d\circ\varphi\;;\] 

Since $\varphi\in\cZ^{1\ast}_+(\ell;0;\textbf{\texttt{b}},\textbf{\texttt{b}})$, it follows that $(e,f)\in \cZ^{1\ast}_+(\ell;0;\textbf{\texttt{a}},\textbf{\texttt{b}})\times \cZ^{1\ast}_+(\ell;0;\textbf{\texttt{b}},\textbf{\texttt{c}})$.

Let $v_{e,f}=g_{|v|}\circ\cdots g_2\circ g_1$, where $g_i =\rho_{e,f}(s_i)^{-1}$ for all $i$. Set $y_0=x$ and define recursively $y_i=g_i(y_{i-1})$. It follows that $y_i = x_i$ for all $0\leq i <|v|$ and thus \[v_{e,f}(x)=g_{|v|}(x_{|v|-1})=\varphi^{-1}\circ f_{|v|}\circ \varphi (x_{|v|-1})\neq x_{|v|}=x.\]

If $\varphi$ is sufficiently close to $\id$, we get $(e,f,x)\in U$ contradicting that $U\subset X_v$.
\end{proof}

We can control the topology of the leaves in $\FF^{\Join}_{c,d}$ associated to the $\rho_{c,d}$-orbits of the points $b_n$, $n\in\Z$, for a generic choice of $c,d$. This is equivalent to control their stabilizers. Observe that the previous Lemma implies that, for a generic choice of the pair $(c,d)$, the generic leaf of $\FF^{\Join}_{c,d}$ is homeomorphic to a plane.

\begin{defn}[Predefined stabilizers]
Let $w\in \Gamma^{\Join}_\ell$, let $w=s_1\cdots s_n$ be a normal representation of $w$ (see Remark~\ref{r:amalgalm}) and let $\texttt{b}=\{b_k\}_{k\in\Z}$ be an increasing bisequence in $]0,\ell(0)[$ with accumulation points in $0, \ell(0)$. Suppose that $w_{c,d}(b_m)=b_m$ for some $m\in\Z$, and let $w_{c,d}=f_{n}\circ\cdots\circ f_1$ where $f_i=\rho_{c,d}({s_i})^{-1}$. Set $x_0=b_m$ and define inductively $x_i=f_{i}(x_{i-1})$ for $1\leq i\leq n$, we obtain a cycle of points $x_0,\dots,x_n$, where $x_0=x_n$. We say that the cycle is {\em predefined} if, for every $i\geq 1$, there exists $j_i\in\Z$ such that $\overline{x}_i=b_{j_i}$.

If this cycle associated to a normal representation of $w$ is {\em predefined}, it will be said that $w$ is a {\em predefined stabilizer} of $b_m$. Let $\Stab_{c,d}^\textsc{P}(\texttt{b},m)$ denote the set of predefined stabilizers associated to the action $\rho_{c,d}$ at $b_m$ for $c,d\in \cZ^{1\ast}_+(\ell;0;\textbf{\texttt{a}},\textbf{\texttt{b}})\times \cZ^{1\ast}_+(\ell;0;\textbf{\texttt{b}},\textbf{\texttt{c}})$  
\end{defn}

\begin{prop}
The definition of predefined stabilizer does not depend on the normal representation of $w$ and $\Stab_{c,d}^\textsc{P}(\texttt{b},m)$ is a subgroup of $\Gamma_\ell^{\Join}$ for all $m\in\Z$.
\end{prop}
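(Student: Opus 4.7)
The plan is to verify the two assertions separately, with both relying on the key observation that $\rho_{c,d}$ restricted to the amalgamation subgroup $\langle\ell\rangle$ is the identity. Indeed, for $\ell\in G^\triangleright$ one has $\rho_{c,d}(\ell)=c\ell c^{-1}=\ell$ since $c\in\cZ^0_+(\ell;0)$ commutes with $\ell$; the analogous computation using $d$ handles the other side. This is exactly the feature that makes overlining compatible with the $\rho_{c,d}$-action.

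For independence of normal representation, let $w=s_1\cdots s_n=t_1\cdots t_n$ be two normal representations of $w\in\Gamma_\ell^{\Join}$. By Remark~\ref{r:amalgalm}, there exist $a_0,\dots,a_n\in\langle\ell\rangle$ with $a_0=a_n=1$ such that $s_i=a_{i-1}^{-1}t_i a_i$ for all $i$. Applying $\rho_{c,d}$ and using that it fixes $\langle\ell\rangle$, the letters transform as $f_i=a_i^{-1}g_i a_{i-1}$, where $f_i=\rho_{c,d}(s_i)^{-1}$ and $g_i=\rho_{c,d}(t_i)^{-1}$. A straightforward induction starting from $x_0=y_0=b_m$ then gives $y_i=a_i(x_i)$ for all $i$; since each $a_i$ is a power of $\ell$, we have $\overline{y}_i=\overline{x}_i$, so the two cycles have the same overline sequence and the predefined condition does not depend on the chosen representation.

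For the subgroup property, the identity element is trivially predefined. For inversion, if $w=s_1\cdots s_n$ has predefined cycle $(x_0,x_1,\dots,x_n=x_0)$, then $w^{-1}=s_n^{-1}\cdots s_1^{-1}$ is again a normal representation and a direct calculation shows that its cycle is $(x_0,x_{n-1},x_{n-2},\dots,x_1,x_0)$, which has the same overline values. The essential case is closure under products. Given predefined normal representations of $w$ and $v$, consider the concatenated expression $s_1\cdots s_n t_1\cdots t_k$ of $wv$; the sequence of points it generates from $b_m$ is simply the concatenation of the $w$-cycle and the $v$-cycle, so all its overlines lie in $\{b_j\mid j\in\Z\}$. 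This expression need not be in normal form, and one has to reduce it to a normal form using the amalgamation relations.

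The main obstacle is controlling this reduction. The relevant moves are: combining two adjacent letters in the same factor, and absorbing a letter that lies in $\langle\ell\rangle$ into an adjacent one. The key observation is that whenever two adjacent letters $u_i,u_{i+1}$ are merged into $u'=u_iu_{i+1}$, the cycle point at the merged position becomes $\rho_{c,d}(u')^{-1}(z_{i-1})=\rho_{c,d}(u_{i+1})^{-1}\rho_{c,d}(u_i)^{-1}(z_{i-1})=z_{i+1}$, so the reduction simply \emph{deletes} the intermediate point $z_i$ from the cycle without altering any other cycle point. Moreover, whenever a reduced letter lies in $\langle\ell\rangle$, it acts as a power of $\ell$ and so preserves the overline of the next cycle point before it is absorbed. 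Iterating, the cycle associated to the final normal representation is a subsequence of the original concatenated cycle, hence all its overlines lie in $\{b_j\}$. Together with the independence-of-representation result, this shows $wv\in \Stab_{c,d}^{\textsc{P}}(b_m)$ and completes the proof. The combinatorial bookkeeping in this reduction (tracking what happens when several successive cancellations produce $\langle\ell\rangle$-elements that propagate through the expression) is the technical heart of the argument, but the invariance of overlines under $\langle\ell\rangle$-translation makes each step benign.
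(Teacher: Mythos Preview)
Your proof is correct and follows essentially the same approach as the paper. Both arguments use the relation $s_i=a_{i-1}^{-1}t_i a_i$ from the normal-form theory of amalgamated products to show that the two cycles differ only by $\ell$-powers (hence have identical overlines), and both handle closure under products by reducing the concatenation $s_1\cdots s_n t_1\cdots t_k$ to normal form while observing that each reduction step merely deletes intermediate cycle points. Your treatment of the reduction is phrased more abstractly (as a sequence of ``merge'' and ``absorb'' moves yielding a subsequence of the concatenated cycle), whereas the paper works explicitly at the seam $p_r\cdot q_1$ and recurses; the underlying computation is the same.
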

\begin{proof}
Let $p_1 \cdots p_n=w=q_1\cdots q_n$ be two normal representations of $w$. From Remark~\ref{r:amalgalm} it follows that there exists integers $n_i\in \Z$, $0\leq i\leq n$, where $n_0=0$, such that $p_i=\ell^{-n_{i-1}}q_i\cdot \ell^{n_i}$ for all $1\leq i\leq n$ . Set $x_0=b_m=y_0$ and define $x_i=\rho_{c,d}(p_i)^{-1}(x_{i-1})$ and $y_i=\rho_{c,d}(q_i)^{-1}(y_{i-1})$, $i\geq 1$. It follows that $y_i=\ell^{n_i}(x_i)$ for all $i\geq 1$ and therefore each pair $x_i,y_i$ belong to the same $\ell$-orbit, it follows that the $x_i$'s form a predefined cycle if and only if the $y_i$'s also do.

Let us show now that $\Stab_{c,d}^\textsc{P}(\texttt{b},m)$ is a group for all $m\in\Z$. It is clear that it contains $\id$. It is also clear that it is closed under inverses since the inversion of a normal representation is still a normal representation. It just remains to show that it is closed under compositition.

Let $w,v\in \Stab_{c,d}^\textsc{P}(\texttt{b},m)$ and let $w=p_1\cdots p_r$, $v=q_1\cdots q_s$ be normal representations of $w$ and $v$. Let $x_0,\dots,x_r$ and $y_0,\dots, y_s$ be the predefined cycles associated to these normal representations. If $p_1\cdots p_r\cdot q_1\cdots q_s$ is still a normal representation, then it is clear that $w\cdot v$ will be a predefined stabilizer, where the associated predefined cycle is $x_0,\dots,x_r (= y_0),y_1,\dots,y_s$. In general $p_1\cdots p_r\cdot q_1\cdots q_s$ does not need to be a normal representation for $w\cdot v$, and this only occurs when $p_r$ and $q_1$ belong to the same free factor, say $G^\triangleright$. If $p_r\cdot q_1$ is not a power of $\ell$, then setting $h_1=p_r\cdot q_1$, we shall obtain the normal representation $p_1\cdots p_{r-1}\cdot h_1\cdot q_2\cdots q_s$ which induces the following predefined cycle: $x_0,\dots,x_{r-1}, y_1,\dots,y_s$, so $w\cdot v$ is still a predefined stabilizer. If $h_1=p_r\cdot q_1$ is a power of $\ell$, then we obtain that $h_2=p_{r-1}\cdot h_1\cdot q_2\in G^\triangleleft$. If $h_2$ is not a power of $\ell$, then we shall obtain the normal representation  $p_1\cdots p_{r-2}\cdot h_2\cdot q_3\cdots q_s$ with the following associated predefined cycle: $x_0,\dots,x_{r-2}, y_2,\dots,y_s$. This process of reduction to a normal representation is finite and, in any case, the associated cycle is predefined, thus $w\cdot v\in\Stab_{c,d}^\textsc{P}(\texttt{b},m)$ as desired.
\end{proof}

\begin{prop}\label{p:well defined stabilizers}
The definition of predefined stabilizer does not depend on $c,d$, i.e., for every $(c, d)$ and $(e,f)$ in $\cZ^{1\ast}_+(\ell;0;\textbf{\texttt{a}},\textbf{\texttt{b}})\times \cZ^{1\ast}_+(\ell;0;\textbf{\texttt{b}},\textbf{\texttt{c}})$, we have $\Stab_{c,d}^\textsc{P}(\texttt{b},m)= \Stab_{e,f}^\textsc{P}(\texttt{b},m)$.
\end{prop}
\begin{proof}
Assume that $w\in\Stab_{c,d}^\textsc{P}(\texttt{b},m)$. Let $w=s_1\cdots s_n$ be a normal representation and let $x_0,\dots, x_n$ be the associated predefined cycle. This means that for all $0\leq i\leq n$ there exists $k_i,j_i\in\Z$ such that $x_i= \ell^{k_i}(b_{j_i})$, where $x_0=b_m=x_n$.

Let $(e,f)\in\cZ^{1\ast}_+(\ell;0;\textbf{\texttt{a}},\textbf{\texttt{b}})\times \cZ^{1\ast}_+(\ell;0;\textbf{\texttt{b}},\textbf{\texttt{c}})$. Set $y_0=b_m$ and set $y_i=\rho_{e,f}(s_i)^{-1}(y_{i-1})$ for $1\leq i\leq n$. Let $g_i=s_i^{-1}$ for all $1\leq i\leq n$, these are projective diffeomorphisms. If $s_i\in G^\triangleright$ (resp. $G^\triangleleft$), then $\rho_{c,d}(s_i)^{-1}= c \circ g_i \circ c^{-1}$ (resp. $=d^{-1} g_i d$) and $\rho_{e,f}(s_i)^{-1}= e\circ g_i\circ e^{-1}$ (resp. $=f^{-1}\circ g_i\circ f$). 

By definition $x_0=b_m=y_0$, we shall show that, in fact $x_i=y_i$ for all $i\in\{0,\dots n\}$. Assume, as induction hypothesis, that $x_{i-1}=y_{i-1}$ for some $i$, then $y_{i-1}=\ell^{k_{i-1}}(b_{j_{i-1}})$ for suitable integers $k_{i-1}$ and $j_{i-1}$. Assume without loss of generality that $s_i\in G^\triangleright$. It follows that
\begin{align*}
y_i&=e\circ g_i \circ e^{-1} (\ell^{k_{i-1}}(b_{j_{i-1}}))=e\circ g_i\circ \ell^{k_{i-1}}(e^{-1}(b_{j_{i-1}}))=\\
e\circ& g_i\circ \ell^{k_{i-1}}(c^{-1}(b_{j_{i-1}}))= e\circ c^{-1}\circ c\circ g_i\circ c^{-1}(x_{i-1})=\\
e\circ& c^{-1}(x_i) = e\circ c^{-1} (\ell^{k_i}(b_{j_i}))= \ell^{k_i}\circ e\circ c^{-1} (b_{j_i})= \ell^{k_i}(b_{j_i})=x_i\;.
\end{align*}
Here we use that $e,c$ commute with $\ell$ and $e^{-1}(b_j)=c^{-1}(b_j)$ for all $j\in\Z$. An analogous reasoning applies when $s_i\in G^\triangleleft$ and thus, by finite induction $y_i=x_i$ for all $i$. Therefore the cycle $y_i$, $0\leq i\leq n$ is predefined for any pair $(e,f)$ as desired.
\end{proof}

\begin{notation}
By means of the above Proposition, we can use the notation $\Stab^{\textsc P}_{m}(\ell;0;\textbf{\texttt{a}};\textbf{\texttt{b}};\textbf{\texttt{c}})$ in order to denote the  group of predefined stabilizers $\Stab^\textsc{P}_{c,d}(\textbf{\texttt{b}},m)$ associated to any pair $(c,d)\in \cZ^{1\ast}_+(\ell;0;\textbf{\texttt{a}},\textbf{\texttt{b}})\times \cZ^{1\ast}_+(\ell;0;\textbf{\texttt{b}},\textbf{\texttt{c}})$ 
\end{notation}

\begin{lemma}\label{l:generic 2.5}
Let $\textbf{\texttt{a}}=\{a_n\}_{n\in\Z}$,$\textbf{\texttt{b}}=\{b_n\}_{n\in\Z}$, $\textbf{\texttt{c}}=\{c_n\}_{n\in\Z}$  be synchronized bisequences. Assume that all the points $a_n, c_n$, $n\in\Z$, do not belong to the orbits of $\infty$ or $0$ for the projective actions of the groups $G^\triangleright$ and $G^\triangleleft$ on the circle. There exists a residual set $\Delta$ of $\cZ^{1\ast}_+(\ell;0;\textbf{\texttt{a}},\textbf{\texttt{b}})\times \cZ^{1\ast}_+(\ell;0;\textbf{\texttt{b}},\textbf{\texttt{c}})$ so that the $\rho_{c,d}$-orbits of each $b_m$ contains neither $0$ nor $\infty$ for all $(c,d)\in\Delta$.
\end{lemma}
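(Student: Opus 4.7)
The plan is to run a Baire--category argument parallel to Lemma~\ref{l:generic 2}. For $w\in\Gamma_\ell^{\Join}$, $m\in\Z$, and $z\in\{0,\infty\}$, set
\[
Y^z_{w,m}=\{(c,d)\in\cZ^{1\ast}_+(\ell;0;\textbf{\texttt{a}},\textbf{\texttt{b}})\times\cZ^{1\ast}_+(\ell;0;\textbf{\texttt{b}},\textbf{\texttt{c}})\mid w_{c,d}(b_m)=z\}.
\]
Each $Y^z_{w,m}$ is closed by continuity of the evaluation $(c,d)\mapsto w_{c,d}(b_m)$; since both index sets are countable it suffices to prove that every $Y^z_{w,m}$ has empty interior and take $\Delta$ to be the complement of the union.

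I would dispose of the short-word cases first. For $|w|=0$, $w=\ell^p$ and $\ell^{-p}(b_m)$ lies in $\,]0,\ell(0)[$, so $Y^z_{\ell^p,m}=\emptyset$. For $|w|=1$, $w=s\cdot\ell^p$ with $s$ in $G^\triangleright$ or $\varsigma^{-1}G^\triangleleft\varsigma$; using that $c,d$ commute with $\ell$, fix $0$ and $\infty$, and send $a_n\mapsto b_n$, the equation $w_{c,d}(b_m)\in\{0,\infty\}$ forces $a_m$ to lie in the $G^\triangleright$- or $\varsigma^{-1}G^\triangleleft\varsigma$-orbit of $\{0,\infty\}$, contradicting the standing hypothesis of the lemma.

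For the inductive step at length $n=|w|\ge 2$, assume empty interior for every $|w'|<n$ and suppose by contradiction that $U:=\operatorname{int}(Y^0_{w,m})\ne\emptyset$ (the $Y^\infty$ case is symmetric). Write $w=s_1\cdots s_n$ in normal form, set $f_i:=\rho_{c,d}(s_i)^{-1}$, $x_0=b_m$ and $x_i=f_i(x_{i-1})$, so $x_n=0$. Inside the Baire space $U$ I would choose $(c_0,d_0)$ satisfying (i) $x_i\notin\{0,\infty\}$ for $i<n$, (ii) the representatives $\overline{x}_0,\dots,\overline{x}_{n-1}$ are pairwise distinct, and (iii) $\overline{x}_{n-1}\notin\{b_j\mid j\in\Z\}$. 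Condition (i) excludes $(c_0,d_0)$ from the sets $Y^{0,\infty}_{u_i\ell^{\pm p},m}$ with $u_i=s_1\cdots s_i$, each of empty interior by the induction hypothesis. For (ii), if $\overline{x}_i=\overline{x}_j$ with $i<j$ then $x_i=\ell^p x_j$ for some $p\in\Z$, and substituting $u_j\mapsto u_i\ell^p$ in $w$ yields $w'=u_i\ell^p s_{j+1}\cdots s_n$ with $(c,d)\in Y^0_{w',m}$ and $|w'|=n-(j-i)<n$, again of empty interior.

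With $(c_0,d_0)$ in hand I would mimic the perturbation step of Lemma~\ref{l:generic 2}. The alternation in the normal form forces $s_{n-1}$ into the factor opposite to $s_n$, so for a $\varphi\in\cZ^{1\ast}_+(\ell;0;\textbf{\texttt{b}},\textbf{\texttt{b}})$ whose restriction to $[0,\ell(0)]$ is supported in a small neighborhood of $\overline{x}_{n-1}$ inside $\,]0,\ell(0)[\,\setminus\{b_j\}$ avoiding every other $\overline{x}_i$, the assignment $(e,f):=(\varphi^{-1}c_0,d_0)$ when $s_n\in G^\triangleright$ (and $(c_0,d_0\varphi)$ otherwise) yields $y_i=x_i$ for $i<n$ and $y_n=\varphi^{-1}\circ f_n\circ\varphi(x_{n-1})$. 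If $\varphi$ moves $x_{n-1}$, then $f_n(\varphi(x_{n-1}))\ne f_n(x_{n-1})=0$ by injectivity of $f_n$; and since $\varphi^{-1}$ is a homeomorphism with $\varphi^{-1}(0)=0$, this forces $y_n\ne 0$. Taking $\varphi$ small places $(e,f)$ inside $U$, contradicting $U\subset Y^0_{w,m}$. The main obstacle I foresee is genericity condition (iii): the set $\{\overline{x}_{n-1}=b_j\}$ does not reduce to any $Y^{0,\infty}_{w',m'}$ of shorter length, so it must be handled either by enlarging the induction to include the countable family of targets $\ell^p(b_j)$ (checking the resulting base cases via the same orbit-tracking calculation) or by a direct perturbation at a point of $S^1$ unconstrained by the defining conditions of the $\cZ^{1\ast}_+$ factors, exploiting that $c$ and $d$ can be freely moved at points outside $\{a_n,b_n,c_n,0,\infty\}$.
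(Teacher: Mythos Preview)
Your argument is correct and follows the paper's proof closely: both run a Baire-category induction on the normal length $|w|$, use the same word-shortening tricks for conditions (i) and (ii), and perturb via an element of $\cZ^{1\ast}_+(\ell;0;\textbf{\texttt{b}},\textbf{\texttt{b}})$ supported near $\overline{x}_{n-1}$.

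The ``obstacle'' you flag at (iii) is not a genuine gap, and your first proposed fix is exactly what the paper does. Since your induction hypothesis already ranges over \emph{all} $m'\in\Z$ (the family $Y^z_{w',m'}$ is countable in both indices, so Baire lets you avoid all of them inside $U$), condition (iii) reduces by the same shortening device as (i) and (ii). Concretely, if $\overline{x}_{n-1}=b_j$, say $x_{n-1}=\ell^r(b_j)$, then set $w'=\ell^{-r}\cdot s_n$; one has $|w'|=1<n$ and
\[
w'_{c,d}(b_j)=\rho_{c,d}(\ell^{-r}s_n)^{-1}(b_j)=f_n\circ\ell^{r}(b_j)=f_n(x_{n-1})=x_n=0,
\]
so $(c_0,d_0)\in Y^0_{w',j}$, which is empty by your base case $|w'|=1$. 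The paper states this step as ``$x_i$ cannot be an element in the $\ell$-orbit of any $b_j$'' and proves it by the identical substitution. Your final perturbation step is in fact slightly cleaner than the paper's, since the target $0$ is fixed by $\varphi$ and you can argue directly from injectivity rather than invoking the full hypothesis $\overline{x}\neq\overline{y}$ of Lemma~\ref{l:perturbation}.
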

\begin{proof}
Let $\Delta_n$ be the set of pairs $(c,d)\in\cZ^{1\ast}_+(\ell;0;\textbf{\texttt{a}},\textbf{\texttt{b}})\times \cZ^{1\ast}_+(\ell;0;\textbf{\texttt{b}},\textbf{\texttt{c}})$ such that $\infty\neq w_{c,d}(b_m)\neq 0$ for all $w\in \Gamma_\ell^{\Join}$, with $|w|=n$ and all $m\in\Z$. Let us prove that each $\Delta_n$ is residual. 

Observe first that $\Delta_1=\cZ^{1\ast}_+(\ell;0;\textbf{\texttt{a}},\textbf{\texttt{b}})\times \cZ^{1\ast}_+(\ell;0;\textbf{\texttt{b}},\textbf{\texttt{c}})$ since every $w$ with normal length $1$ satisfies that $w_{c,d}$ is conjugated to an element of $G$ by $c^{-1}$ or $d$ (this only depends on what factor of the amalgamated product contains $w$). This conjugation preserves the $\infty$ point and maps the sequence $b_n$ to the sequences $a_n$ or $c_n$ (corresponding to $c^{-1}$ or $d$ respectively). It follows that $w_{c,d}(b_m)$ is the infinity point if and only if $\infty$ belongs to the $G^\triangleright$-orbit of some $a_n$ or the $G^\triangleleft$-orbit of $c_n$ in contradiction with the choice of these sequences, the same applies to $0$.

Now we proceed by induction. Assume that $\Delta_n$ is residual for all $n < k$ and let us show that $\Delta_k$ is also residual. Let $w=s_1\cdots s_k$ be a normal representation of an element with normal length equal to $k$, thus $w_{c,d}=f_k\circ\cdots \circ f_1$ where $f_i=\rho_{c,d}(s_i)^{-1}$. Assume that $w_{c,d}(b_m)=\infty$ (resp. $0$) for some $(c,d)\in\bigcap_{i=1}^{k-1}\Delta_i$.

Let $x_0=b_m$ and define, recursively, $x_i= f_i(x_{i-1})$. By the induction hypothesis $x_i$ cannot be $\infty$ or $0$ for every $0\leq i<k$. Moreover $x_i$ cannot be an element in the $\ell$-orbit of any $b_j$, $j\in\Z$ and $0<i<k$, for otherwise, if $x_i=\ell^{r}(b_j)$ for some $j,r\in\Z$ and $i>0$, then we can take $u=(\ell^{-r}\cdot s_{i+1})\cdot s_{i+2}\cdots s_k$. This is still a normal representation of $u$ and $|u|<k$. But $u_{c,d}$ maps $b_j$ to $\infty$ (resp. $0$) contradicting the choice of $c,d$. Thus $\overline{x}_i\in\,]0,\ell(0)[\,\setminus\{b_n\mid\ n\in\Z\}$ for all $1\leq i<k$.

It is also clear that $\overline{x}_i\neq\overline{x}_j$ for $i\neq j\in\{1,\dots, k-1\}$. Otherwise, assuming $i<j$ and $x_j=\ell^k(x_i)$, take $v=s_1\cdots \cdot s_{i-1}\cdot (s_{i}\cdot \ell^{-r})\cdot s_{j+1}\cdots s_{k}$. It follows that $|v|<k$ and $w_{c,d}(b_m)=\infty$ (resp. $=0$). This is a contradiction again with the choice of $(c,d)$.

If $s_k\in G^\triangleright$ (resp. $s_k\in G^\triangleleft$), then we can perturb $c$ (resp. $d$) as in Lemma~\ref{l:generic 2}, conjugating by a flow $\varphi=\phi_t\in\cZ^{1\ast}_+(\ell;0)$, for any $t\neq 0$, whose restriction to $[0,\ell(0)]$ is supported in a small neighborhood of $\overline{x}_{k-1}$ with empty intersection with $\{b_n\mid\ n\in\Z\}\cup\{\overline{x}_1,\dots,\overline{x}_{k-2}\}$, in particular $\phi_t\in\cZ^{1\ast}_+(\ell;0;\textbf{\texttt{b}},\textbf{\texttt{b}})$. The perturbed action $\rho_{e,f}$ for $e=\varphi^{-1}\circ c$, $f=d$ (resp. $e=c$, $f=d\circ\varphi$) satisfies that $w_{e,f}(b_m)\neq \infty$ (resp. $0$).

This shows that the sets 
\[F_w^{m}=\{(c,d)\in\cZ^{1\ast}_+(\ell;0;\textbf{\texttt{a}},\textbf{\texttt{b}})\times \cZ^{1\ast}_+(\ell;0;\textbf{\texttt{b}},\textbf{\texttt{c}})\mid\ w_{c,d}(b_m)=\infty\ \text{or}\ w_{c,d}(b_m)=0\}\] have empty interior. Since each $F^m_w$ is closed, it follows that $\bigcup_{m\in\Z}F_w^m$ is meager, denote this meager set by $F_w$. Therefore $\Delta_n=\cZ^{1\ast}_+(\ell;0;\textbf{\texttt{a}},\textbf{\texttt{b}})\times \cZ^{1\ast}_+(\ell;0;\textbf{\texttt{b}},\textbf{\texttt{c}})\setminus\bigcup_{|w|<n} F_w$ is residual as desired. Finally, $\Delta=\bigcap_n\Delta_n$ is also residual and satisfies the required conditions.
\end{proof}

Now we are ready to prove our main Lemma: for a generic choice of actions $\rho_{c,d}$, the group of stabilizers of $\rho_{c,d}$ at each $b_m$ (see  Definition~\ref{d:stabilizer_group}) coincides with the group of predefined stabilizers of $b_m$.

\begin{lemma}\label{l:generic 3}
Let $\textbf{\texttt{a}}=\{a_n\}_{n\in\Z},\textbf{\texttt{b}}=\{b_n\}_{n\in\Z}, \textbf{\texttt{c}}=\{c_n\}_{n\in\Z}$ be synchronized bisequences. Assume that all the points $a_n, c_n$, $n\in\Z$, do not belong to the orbits of $\infty$ or $0$ for the projective actions of the groups $G^\triangleright$ and $G^\triangleleft$ on the circle. There exists a residual set $\Omega$ of $\cZ^{1\ast}_+(\ell;0;\textbf{\texttt{a}},\textbf{\texttt{b}})\times \cZ^{1\ast}_+(\ell;0;\textbf{\texttt{b}},\textbf{\texttt{c}})$ so that 
$\Stab_{\rho_{c,d}}(b_m)=\Stab^{\textsc {P}}_{m}(\ell;0;\textbf{\texttt{a}};\textbf{\texttt{b}};\textbf{\texttt{c}})$ for all $( c, d)\in \Omega$ and all $m\in\Z$.
\end{lemma}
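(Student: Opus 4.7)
The inclusion $\Stab^{\textsc{P}}_{b_m}(\ell;0;\textbf{\texttt{a}};\textbf{\texttt{b}};\textbf{\texttt{c}})\subseteq\Stab_{\rho_{c,d}}(b_m)$ is immediate from Proposition~\ref{p:well defined stabilizers}, so the plan is to produce a residual set on which the reverse inclusion holds. For each $m\in\Z$ and each $w\in\Gamma_\ell^{\Join}\setminus\Stab^{\textsc{P}}_{b_m}$, I would form the closed set
\[
X_{w,m}=\{(c,d)\in\cZ^{1\ast}_+(\ell;0;\textbf{\texttt{a}},\textbf{\texttt{b}})\times\cZ^{1\ast}_+(\ell;0;\textbf{\texttt{b}},\textbf{\texttt{c}})\mid w_{c,d}(b_m)=b_m\}.
\]
Since $\Gamma_\ell^{\Join}\times\Z$ is countable and the product space is Baire, the lemma follows by setting $\Omega=\Delta\setminus\bigcup_{w,m}X_{w,m}$, with $\Delta$ the residual set of Lemma~\ref{l:generic 2.5}, provided one shows each $X_{w,m}$ has empty interior.

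To establish empty interior I would argue by contradiction using a minimality-of-word-length scheme analogous to the one used in Lemmas~\ref{l:generic 2} and~\ref{l:generic 2.5}. Assume some $X_{w,m}$ contains a nonempty open set, and choose such $(w,m)$ with the normal-form length $|w|=n$ minimal among non-predefined stabilizing pairs. Fix a normal representation $w=s_1\cdots s_n$ and a pair $(c,d)$ in the interior of $X_{w,m}\cap\Delta$, which exists by Baire. Setting $x_0=b_m$ and $x_i=\rho_{c,d}(s_i)^{-1}(x_{i-1})$, we have $x_n=b_m$; because $w$ is not predefined there exists $i^*\in\{1,\ldots,n-1\}$ with $\overline{x}_{i^*}\notin\{b_j\mid j\in\Z\}$, and because $(c,d)\in\Delta$ we also have $x_{i^*}\notin\{0,\infty\}$.

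The first key step is to show that $\overline{x}_0,\ldots,\overline{x}_{n-1}$ are pairwise distinct. Any coincidence $\overline{x}_i=\overline{x}_j$ with $i<j$ would allow us to absorb the connecting power of $\ell$ into an adjacent factor and produce a strictly shorter word $w'$ that still stabilizes $b_m$; minimality of $|w|$ would then force $w'$ to be predefined, but by tracing through the sub-cycle one verifies that $w'$ is predefined if and only if the excised piece was, contradicting the non-predefined status of $w$. The second key step applies Lemma~\ref{l:perturbation} to produce a flow $\varphi=\phi_t\in\cZ^{1\ast}_+(\ell;0)$ whose restriction to $[0,\ell(0)]$ is supported in a small neighborhood $V$ of $\overline{x}_{i^*}$ chosen disjoint from $\{b_j\mid j\in\Z\}$, from the remaining $\{\overline{x}_i\mid i\neq i^*\}$, and from $\{0,\ell(0)\}$. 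The disjointness from $\{b_j\}$ ensures $\varphi\in\cZ^{1\ast}_+(\ell;0;\textbf{\texttt{b}},\textbf{\texttt{b}})$, so the perturbed pair $(\varphi^{-1}\circ c,d)$ or $(c,d\circ\varphi)$ (depending on whether $s_{i^*+1}\in G^\triangleright$ or $\varsigma^{-1}G^\triangleleft\varsigma$) lies in $\cZ^{1\ast}_+(\ell;0;\textbf{\texttt{a}},\textbf{\texttt{b}})\times\cZ^{1\ast}_+(\ell;0;\textbf{\texttt{b}},\textbf{\texttt{c}})$ and stays arbitrarily close to $(c,d)$ in the $C^1$-topology.

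Finally, a cycle computation exactly parallel to the end of the proof of Lemma~\ref{l:generic 2.5} shows that the perturbed points $y_i$ coincide with $x_i$ for $i\neq i^*$ (since $\varphi$ fixes every $\overline{x}_i$ with $i\neq i^*$ as well as every $b_j$), whereas at step $i^*$ the value $\varphi(x_{i^*})\neq x_{i^*}$ enters the next projective map, and a generic choice of the flow parameter $t$ near $0$ propagates this discrepancy through to $y_n\neq b_m$, contradicting that $(c,d)$ lies in the interior of $X_{w,m}$. The main obstacle is the length-reduction argument: one must verify carefully that collapsing a sub-cycle via a common $\ell$-orbit produces a word whose predefined/non-predefined character is controlled, and that the normal-form bookkeeping (merging same-factor letters after inserting a power of $\ell$) is handled so that the shorter word genuinely violates the minimality assumption.
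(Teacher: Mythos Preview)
Your overall scheme (closed sets $X_{w,m}$, Baire, minimal-length counterexample, perturbation) is right, but the route you take from there is different from the paper's and contains a real gap in the ``first key step''.

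The paper does \emph{not} first prove that the $\overline{x}_i$'s are pairwise distinct. Instead it finds the maximal index $k_0$ such that $\overline{x}_{k_0}\notin\{b_n\}$ \emph{and} $\overline{x}_{k_0}\neq\overline{x}_i$ for $i<k_0$ (such an index exists because the first $i$ with $\overline{x}_i\notin\{b_n\}$ already has this property), perturbs there so that the new $\overline{y}_{k_0+1}$ is again outside $\{b_n\}$ and distinct from its predecessors, and then iterates: find $k_1>k_0$, perturb again, and so on until the orbit is pushed off $b_m$ after at most $r$ steps. This sidesteps any length-reduction bookkeeping.

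Your length-reduction argument, as written, does not go through. From ``$|w'|<|w|$'' minimality only gives that $X_{w',m}$ has empty interior when $w'$ is not predefined; it does \emph{not} give $(c,d)\notin X_{w',m}$, so you cannot conclude that $w'$ is predefined. The fix is to strengthen the choice of $(c,d)$: pick it in the interior of $X_{w,m}\cap\Delta$ but outside the meager union $\bigcup X_{u,m'}$ over all non-predefined pairs $(u,m')$ with $|u|<|w|$. Then a coincidence $\overline{x}_i=\overline{x}_j$ forces the collapsed word $w'$ (whose normal-form cycle is the sub-cycle $x_0,\dots,x_i,x_{j+1},\dots,x_n$) to be predefined, so $\overline{x}_i=b_l$; the excised piece then gives a shorter stabilizer of $b_l$, which by the same refined choice must also be predefined, and combining the two sub-cycles shows $w$ itself is predefined---the desired contradiction. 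You correctly flagged this step as the main obstacle, but the specific missing ingredient is this extra genericity in the choice of $(c,d)$, together with tracking that the normal-form cycle of $w'$ really is a sub-cycle of that of $w$ (including the case where the reduction cascades). Once pairwise distinctness is established, your single-perturbation endgame works: for small $t\neq 0$ the perturbed trajectory stays outside the support of $\varphi$ at every index $i\neq i^*$ by continuity, so after step $i^*+1$ the remaining $f_i$'s act unperturbed and $y_n=f_n\circ\cdots\circ f_{i^*+2}(y_{i^*+1})\neq b_m$.
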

\begin{proof}
For $u\in \Gamma_\ell^{\Join}$ and $m\in\Z$, let us define
\[
Y_{u}^m=\{(c,d)\in \cZ^{1\ast}_+(\ell;0;\textbf{\texttt{a}},\textbf{\texttt{b}})\times \cZ^{1\ast}_+(\ell;0;\textbf{\texttt{b}},\textbf{\texttt{c}})\mid\ u_{c,d}(b_m) = b_m\}
\]

We shall show that $Y^m_u$ has empty interior for all $u\notin \Stab_m^\textsc{P}(\ell;0;\textbf{\texttt{a}};\textbf{\texttt{b}};\textbf{\texttt{c}})$.  This is a countable family of closed sets, therefore its intersection will be meager. The residual complement will be formed by those pairs $(c,d)\in \cZ^{1\ast}_+(\ell;0;\textbf{\texttt{a}},\textbf{\texttt{b}})\times \cZ^{1\ast}_+(\ell;0;\textbf{\texttt{b}},\textbf{\texttt{c}})$ such that $\Stab_{\rho_{c,d}}(b_m)=\Stab^{\textsc {P}}_{m}(\ell;0;\textbf{\texttt{a}};\textbf{\texttt{b}};\textbf{\texttt{c}})$       as desired.

Assume that there exists $v\notin\Stab_{m}^\textsc{P}(\ell;0;\textbf{\texttt{a}};\textbf{\texttt{b}};\textbf{\texttt{c}})$ of minimal length in normal form, $|v|=r$, so that $Y_v^m$ has nonempty interior. Let $\Delta$ be the residual set defined in Lemma~\ref{l:generic 2.5}. Let $(c,d)\in\Delta$ be an interior point of $Y_v^m$.

Let $v=s_1\cdots s_r$ be a normal representation of $v$. Observe that $r>1$, since for $r=1$ usual stabilizers are also predefined stabilizers. Set $x_0=b_m$ and $x_i=\rho_{c,d}(s_i)^{-1}(x_{i-1})$ for $i\geq 1$. Since $(c, d)\in\Delta$ it follows that  $\infty\neq\bar{x}_i\neq 0$ for all $i$. Since $v\notin\Stab_{m}^\textsc{P}(\ell;0;\textbf{\texttt{a}};\textbf{\texttt{b}};\textbf{\texttt{c}})$ it follows that there exists $k_0\in\{1,\dots, r-1\}$ so that $\bar{x}_{k_0}\neq b_n$, for all $n\in\Z$. Let $k_0$ be the maximal integer as above such that $\overline{x}_{k_0}\neq \bar{x}_i$ for all $0\leq i< k_0$.

Take a flow $\phi_t\in\cZ^{1\ast}_+(\ell;0;\textbf{\texttt{b}},\textbf{\texttt{b}})$ close to the identity for all $t$ close to $0$ and such that $\phi_{|[0,\ell(0)]}$ is supported in a neighborhood of $\overline{x}_{k_0}$ that does not contain any $b_n$, $n\in\Z$, and no other $\overline{x}_i$ for $0\leq i< k_0$ (this is possible by means of Lemma~\ref{l:perturbation}  and Corollary~\ref{c:perturbation}).

Choose $t_0\neq 0$ arbitrary; if $s_{k_0+1}\in G^\triangleright$ (resp. $G^\triangleleft$), then set $e_0=\phi_{-t_0}\circ c$ and $f_0=d$ (resp. $e_0=c$, $f_0=d\circ\phi_{t_0}$). Let $y_0 = x_0=b_m$ and define $y_i=\rho_{e_0,f_0}(s_i)^{-1}(y_{i-1})$ for $i\geq 1$. It follows that $x_i=y_i$ for $0\leq i\leq k_0$ and $\overline{y}_{k_0+1}\neq \overline{y}_i$ for all $0\leq i\leq k_0$; moreover, for almost all $t_0$ we also have $\overline{y}_{k_0+1}\neq b_n$ for all $n\in\Z$. Observe that this works even when $\overline{x}_{k_0+1}=\overline{x}_{k_0}$ since non trivial elements of both $G^\triangleright$ and $G^\triangleleft$ are not germs of the identity at any point.

If $v_{e_0,d_0}(b_m)\neq b_m$, for instance when $k_0=r-1$, then this perturbation suffices for our purposes. Otherwise let $k_1>k_0$ be the maximal integer so that $\overline{y}_{k_1}\neq \overline{y}_i$ for $0\leq i < k_1$, it exists since $k_0+1$ satisfies this property. Moreover we can guarantee that $y_{k_1}\neq\infty$ and $\overline{y}_{k_1}$ does not meet $\ell(0)$ nor any $b_n$ $n\in\N$ since the $\ell$-orbit of $x_{k_0}$ does not meet these points and the perturbation can be chosen arbitrarily close to $\id$,  i.e. take $t_0$ sufficiently close to $0$. Set $z_0=y_0=b_m$. By the same argument as above, there exists a perturbed pair $(e_1,f_1)$ defined from $(e_0,f_0)$ and a suitable flow in $\Z^{1\ast}_+(\ell;0)$, so that the points $z_i=\rho_{e_1,f_1}(s_i)^{-1}(z_{i-1})$ satisfy that $\bar{z}_{k_1+1}\neq \bar{z}_{i}$ for $0\leq i\leq k_1+1$. 

We can repeat this process inductively until we get an integer $N\in\N$, $N\leq r$, such that $v_{e_N,f_N}(b_m)\neq b_m$. This contradicts the fact that $(c,d)$ is an interior point of $Y_v^m$.
\end{proof}

\section{Realizing the topologies}\label{s:realizing topology}

In this subsection we deal with the problem of the realization of any open oriented surface as the leaf of a suitable $\FF^{\Join}_{c,d}$ (see Definition~\ref{d:F_c-d}), 
the proof of Theorem~\ref{Main Theorem} will be a consequence of this construction. This is done in two steps:
\begin{enumerate}
\item The manifold $W_F^\nu$, constructed in Definition~\ref{d:realization W_F} appears as a proper submanifold with boundary of some leaf of
$\FF^{\Join}_{c,d}$ for all $(c,d)\in\cZ^{1\ast}_+(\ell;0;\textbf{\texttt{a}},\textbf{\texttt{b}})\times \cZ^{1\ast}_+(\ell;0;\textbf{\texttt{b}},\textbf{\texttt{b}})$, for two suitable 
synchronized bisequences $\textbf{\texttt{a}},\textbf{\texttt{b}}$ that depends in the tree $T_F$ and its coloring.

\item The second step is to show that, for a residual set in $\cZ^{1\ast}_+(\ell;0;\textbf{\texttt{a}},\textbf{\texttt{b}})\times \cZ^{1\ast}_+(\ell;0;\textbf{\texttt{b}},\textbf{\texttt{b}})$, the leaf which contains 
$W_F^\nu$ is homeomorphic to the complete filling of $W^\nu_F$, and therefore it is homeomorphic to $S_F^\nu$ by Proposition~\ref{p:W=S}.
\end{enumerate}

The case of the plane, even though it is not very interesting, is the generic case, in fact this is a corollary of Lemma~\ref{l:generic 2}.

\begin{notation}
Recall the definitions of $T^\star$ as a transverse circle of the parabolic tori associated to $\beta_\star$. Let us denote by $L^{\star}_x$ the leaf of $\FF^\star$ that meets the point $x\in T^\star$. Similarly, $B^{\star}_x$ will denote the boundary component of $L^{\star}_x$ that meets a point $x\in T^\star$ (this is an orbit of the trace foliation in the parabolic tori). In order to relax notations $\cL^i$ will denote, simultaneously, the family of leaves of type $i$ of both $\FF^\triangleright$ and $\FF^\triangleleft$. Finally, recall that $\FF^{\Join}_{c,d}$ denotes the foliation obtained from applying Dehn's fillings to the elliptic tori of $\FF_{c,d}$,  $T^{\Join}$ will denote the transverse circle of $\FF^{\Join}_{c,d}$ obtained by the identification of the $T^\star$'s and $L^{c,d}_x$ will denote the leaf of $\FF^{\Join}_{c,d}$ that meets a point $x\in T^{\Join}$.

For the sake of readability, given (bi)equences $a_n\in T^\triangleright$, $c_n\in T^\triangleleft$, $n\in\Z$, then  $L^\triangleright_{\bm{n}}$ (resp. $L^\triangleleft_{\bm{n}}$) will denote\footnote{The bold subscript is used to distinguish $L^\star_{\bm{n}}$ from $L^\star_n$ which just denotes the leaf passing through $n\in\Z\subset\R\cup\{\infty\}\equiv S^1$.} the leaf $L^\triangleright_{a_n}$ (resp. $L^\triangleleft_{c_n}$), for $n\in\Z$. The same convention will be applied for $B^\triangleright_{\bm{n}}$ and $B^\triangleleft_{\bm{n}}$, $n\in\Z$ .
\end{notation}

We also recall that for every $L\in\cL^{1}$, a leaf of type $1$ on $\FF^\star$, the sets $$B^+=\{x\in [0,\ell(0)[\,\mid B_x\ \text{is an outer component}\}$$ and 
$$B^-=\{x\in [0,\ell(0)[\,\mid B_x\ \text{is an inner component}\}$$ are dense in $[0,\ell(0)[$, this is a direct consequence of Lemma~\ref{l:dense out inn}.

\begin{rem}\label{r:synchronization}
The first point will be the construction of suitable synchronized bisequences $a_n$ and $b_n$, $n\in\Z$. We shall see that there is a lot of freedom in the given choices. Let $b_n$ be an arbitrary increasing bisequence whose accumulation points are $0$ and $\ell(0)$. Let $$\varepsilon_n = \min\left\{\dfrac{b_n}{n},\dfrac{(\ell(0)-b_n)}{n}, \dfrac{b_{n+1}-b_n}{2},\dfrac{b_n-b_{n-1}}{2}\right\}\;,$$ whenever we choose points $a_n,c_n\in (b_n-\varepsilon_n,b_n+\varepsilon_n)$ the resulting bisequences will be syncronized with $b_n$.
\end{rem}

Recall that surfaces with one end are obtained as complete fillings (or the interior) of the manifolds $H^0_k$ defined in Subsection~\ref{ss:one end surfaces}.

\begin{prop}\label{p:one end realization}
Let $k\in \N\cup\{\infty\}$. There exist synchronized bisequences $\textbf{\texttt{a}},\textbf{\texttt{b}},\textbf{\texttt{c}}$ such that the leaf $L^{c,d}_{b_0}$ of $\FF^{\Join}_{c,d}$ contains a proper\footnote{More precisely: the submanifold is a closed subset or, equivalently, the inclusion is a proper map.} submanifold homeomorphic to $H_k^0$ for all $(c,d)\in \cZ^{1\ast}_+(\ell;0;\textbf{\texttt{a}},\textbf{\texttt{b}})\times \cZ^{1\ast}_+(\ell;0;\textbf{\texttt{b}},\textbf{\texttt{c}})$.
\end{prop}

\begin{proof}
Let $\textbf{\texttt{b}}=\{b_n\}_{n\in\Z}$ be an arbitrary increasing bisequence accumulating to both $0$ and $\ell(0)$, we shall consider this sequence naturally embedded in both $T^\triangleright$ and $T^\triangleleft$. Define $\varepsilon_n$ as in Remark~\ref{r:synchronization}. We shall define, inductively, increasing bisequences $a_n\in T^\triangleright$ and $c_n\in T^\triangleleft$ such that $|a_n-b_n|<\varepsilon_n$ and  $|c_n-b_n|<\varepsilon_n$. This guarantees the synchronization of the three bisequences.

Let $k\in\N$. By minimality of outer boundary components, for each $0\leq i\leq k-1$, there exists $a_{3i}\in T^\triangleright$ so that $L^{\triangleright}_{\bm{3i}}\in \cL^1$ and $B^\triangleright_{\bm{3i}}$ is an outer boundary component. By minimality of inner boundary components, there exists $a_{3i+1}\in T^\triangleright$ so that $B^\triangleright_{\bm{3i+1}}$ is an inner boundary component of $L^{\triangleright}_{\bm{3i}}$.

Choose now $c_{3i}, c_{3i+1}\in T^\triangleleft$ so that $L^{\triangleleft}_{\bm{3i}}\in\cL^0$ and $c_{3i+1}\in L^\triangleleft_{\bm{3i}}$. Observe that attaching the boundary components $B^\triangleright_{\bm{3i}}\to B^\triangleleft_{\bm{3i}}$ and $B^\triangleright_{\bm{3i+1}}\to B^\triangleleft_{\bm{3i+1}}$ of the leaves $L^\triangleright_{\bm{3i}}$ and $L^\triangleleft_{\bm{3i}}$ produce a space homeomorphic to the noncompact tile $H^0$. Finally, choose $a_{3i+2}\in L^\triangleright_{\bm{3i}}$ and $c_{3i+2}\in L^\triangleleft_{\bm{3i+1}}$.

The leaves $L^\triangleright_{\bm{3i}}$ and $L^\triangleleft_{\bm{3i}}$ are always chosen to be pairwise different, i.e. $L^\triangleright_{\bm{3i}}\neq L^\triangleright_{\bm{3j}}$ and $L^\triangleleft_{\bm{3i}}\neq L^\triangleleft_{\bm{3j}}$ for all $i\neq j$. All these choices are possible by the density of (outer and inner) bounday components of any leaf in $\FF^\star$ and the fact that there are infinitely many leaves of type $0$ and $1$ in our foliated blocks.

The rest of elements of the bisequences which were not previously defined can be rather arbitrary, but always verifying the synchronization condition and belonging to leaves of $\FF^\star$ not considered in the previous construction. Let $\textbf{\texttt{a}},\textbf{\texttt{c}}$ denote, respectively, the bisequences $a_n$ and $c_n$.

For every $(c,d)\in \cZ^{1\ast}_+(\ell;0,\textbf{\texttt{a}},\textbf{\texttt{b}})\times \cZ^{1\ast}_+(\ell;0,\textbf{\texttt{b}},\textbf{\texttt{c}})$ we have that $d\circ c (a_j)=c_j$, thus the respective boundary components are attached in $\FF^{\Join}_{c,d}$ for any choice of $(c,d)\in\cZ^{1\ast}_+(\ell;0;\textbf{\texttt{a}},\textbf{\texttt{b}})\times \cZ^{1\ast}_+(\ell;0;\textbf{\texttt{b}},\textbf{\texttt{c}})$. All of these handles are embedded in the leaf $L^{c,d}_{b_0}$ of $\FF^{\Join}_{c,d}$ since each $L^\triangleright_{\bm{3i}}$ has a boundary component attached with a boundary component of $L^\triangleleft_{\bm{3i+1}}$ via the mapping $a_{3i+2}\to c_{3i+2}$.

This construction defines a proper map $\jmath: H^0_k\to L^{c,d}_{b_0}$ from $H^0_k$ to the leaf $L^{c,d}_{b_0}$  of $\FF_{c,d}^{\Join}$. This may fail to be an embedding at the boundary of $H^0_k$ for some choices of $(c,d)$ since the boundary components not identified by the bisequences $\textbf{\texttt{a}}$ and $\textbf{\texttt{c}}$ are uncontrolled and they might be identified by the attaching map $d\circ c$. In this case, we can remove from of $H^0_k$ a tubular neighborhood of the boundary components, the restriction of $\jmath$ to the resulting subset is the desired embedding.

Note that the previous construction also works when $k=\infty$, since the sequences $a_n$ and $c_n$ can be defined inductively for any $n\in\N$.
\end{proof}

The case of noncompact oriented surfaces with more than one end will follow a similar scheme. As in section~\ref{s:realization}, let $T_F$ be a subtree of the binary tree with no dead ends, containing the root element. Consider the orientation given in Notation~\ref{n:step construction} and let $\nu: V_F\to\{0,1\}$ be a vertex coloring.

Recall the definition of the noncompact manifold $W^\nu_F$ given in Definition~\ref{d:realization W_F}. Those steps of construction must be happening on the transverse gluing of the parabolic tori of $\FF^\triangleright$ and $\FF^\triangleleft$.  

\begin{notation}\label{n:enum_condition}We shall enumerate $V_F$ according with oriented levels, i.e., if $v_n$ is in a lower level than $v_m$, then $n\leq m$, here we shall consider $v_0=\bm{\mathring{v}}$ to be the root element. This enumeration implies that, for all $n\geq 0$ (resp. $n\leq 0$) $v_{n}$ is the target (resp. origin) of just one oriented edge whose origin (resp. target) is $v_{k(n)}$ for some $0\leq k(n)<n$ (resp. $n<k(n)\leq 0$). If $\deg(v_n)=3$, then it is the origin (resp. target) of exactly two oriented edges, in this case we shall define $i(n)$ as the minimum (resp. maximum) index so that $v_{i(n)}$ is the target (resp. origin) of an edge with origin (resp. target) in $v_n$ and $q(n)$ as the maximum (resp. minimum) index with this property. It is clear that $|n|<|i(n)|<|q(n)|$.
\end{notation}

Recall that in the construction of $W^\nu_F$ (see Definition~\ref{d:realization W_F}), when $\deg(v_n)=3$ then the noncompact tile $N_{v_n}$ must be attached with $N_{v_{i(n)}}$ and $N_{v_{q(n)}}$ by two boundary components that cannot be simulteneusly inner nor outer. We shall assume that $N_{v_n}$ is attached to $N_{v_{i(n)}}$ (resp. $N_{v_{q(n)}}$) by an inner (resp. outer) component.

\begin{prop}\label{p:F realization}
For any conected subtree $T_F$ with no dead ends of the binary tree and any coloring $\nu: V_F\to\{0,1\}$ there exist synchronized bisequences $\textbf{\texttt{a}},\textbf{\texttt{b}},\textbf{\texttt{c}}$ such that the leaf $L^{c,d}_{b_0}$ of $\FF^{\Join}_{c,d}$ contains a closed submanifold homeomorphic to $W^\nu_F$ for all $(c,d)\in \cZ^{1\ast}_+(\textbf{\texttt{a}},\textbf{\texttt{b}})\times \cZ^{1\ast}_+(\textbf{\texttt{b}},\textbf{\texttt{c}})$.
\end{prop}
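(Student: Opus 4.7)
The plan is to extend the one-ended construction of Proposition~\ref{p:one end realization} to the tree $T_F$, walking through $V_F$ vertex by vertex, realising the tile $L_{v_n}\in\cT$ assigned to each $v_n$ as one or two leaves of the foliated blocks $\FF^\triangleright,\FF^\triangleleft$, and recording the transverse intersection points required to implement the edge identifications of $T_F$ and the handle identifications of Definition~\ref{d:realization W_F} as prescribed pairs $(a_j,b_j)$ in the forming bi-infinite sequences.

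First I would enumerate $V_F=\{v_n\}$ by oriented levels, so that every $v_n$ with $n\neq 0$ has a previously enumerated parent $v_{k(n)}$, and assign a side $s(v_n)\in\{\triangleright,\triangleleft\}$ with $s(\mathring v)=\triangleleft$ and $s(v_n)\neq s(v_{k(n)})$ along every edge; this is possible because $T_F$ is bipartite, and it ensures that every edge identification and every handle identification takes place through the transverse attaching of the parabolic tori. At each vertex $v_n$ the \emph{main} leaf $M_n$ will be a leaf of $\FF^{s(v_n)}$, of type $0$ when $v_n$ has degree $2$ and $v_n\neq\mathring v$ and of type $1$ otherwise, in accordance with Definition~\ref{d:realization W_F}. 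When $\nu(v_n)=1$ an \emph{auxiliary} type $1$ leaf $M_n'$ in the opposite foliated block is also selected and glued to $M_n$ along two boundary pairs in order to realise the handle of the tile $L_{v_n}$.

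The inductive step uses the minimality of $\FF^\star$ together with Lemma~\ref{l:dense out inn}: once $M_{k(n)}$ has been chosen and a boundary component of it meeting the transversal at a point $p$ has been designated to connect $v_{k(n)}$ with $v_n$, I would pick a new leaf $M_n$ of $\FF^{s(v_n)}$ of the prescribed topological type, distinct from all previously chosen leaves, with a boundary component meeting the transversal at a point $q$ inside the synchronization window of a target sequence $\tilde b_j$ (in the sense of Remark~\ref{r:synchronization}). For degree-$3$ vertices and for the root I would further choose on $M_n$ the remaining boundary components used by the other edges of $T_F$, with the outer/inner pattern imposed by Definition~\ref{d:realization W_F}, and for $\nu(v_n)=1$ the auxiliary leaf $M_n'$ with its two handle boundary components. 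Each identification contributes one pair $(a_j,b_j)$ with $a_j\in T^\triangleright$ and $b_j\in T^\triangleleft$; I single out $b_0$ as the transversal point of the boundary component of the root's main leaf used to connect $\mathring v$ to $v_1$ (or to $M_0'$ when $\nu(\mathring v)=1$). Because every new point is placed in a shrinking window of some $\tilde b_j$, the resulting sequences $\textbf{\texttt{a}},\textbf{\texttt{b}}$ are synchronized.

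For any $(c,d)\in \cZ^{1\ast}_+(\ell;0;\textbf{\texttt{a}},\textbf{\texttt{b}})\times \cZ^{1\ast}_+(\ell;0;\textbf{\texttt{b}},\textbf{\texttt{b}})$ the attaching homeomorphism $d\circ c$ satisfies $(d\circ c)(a_j)=b_j$ for all $j$, so every selected leaf is joined across the transverse attachment into the single leaf $L^{c,d}_{b_0}$ of $\FF^{\Join}_{c,d}$ and the resulting pattern of identifications realises exactly the combinatorial gluing of Definition~\ref{d:realization W_F}. This produces a continuous map $\iota:W^\nu_F\to L^{c,d}_{b_0}$ that is an embedding on the interiors of all tiles. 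As in the proof of Proposition~\ref{p:one end realization}, $\iota$ may fail to be injective along the noncompact boundary components of $W^\nu_F$ corresponding to \emph{unused} boundary components of the selected leaves, where $(c,d)$ is uncontrolled on the transversals and may accidentally identify them; removing a small open tubular neighborhood of $\partial W^\nu_F$ and restricting $\iota$ produces the required proper submanifold of $L^{c,d}_{b_0}$ homeomorphic to $W^\nu_F$. The main technical obstacle is the bookkeeping at degree-$3$ vertices, where several distinguished boundary components of the same type $1$ main leaf must be selected with a prescribed outer/inner pattern, pairwise distinct from previously recorded points, inside the synchronization windows, and compatible with the handle identifications when $\nu(v_n)=1$; this is resolved by Lemma~\ref{l:dense out inn}, which asserts that both $\CC^+_L$ and $\CC^-_L$ are dense on the parabolic tori for every type $1$ leaf $L$, so the finitely many constraints introduced at each inductive step can be simultaneously satisfied.
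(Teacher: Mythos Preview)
Your proposal is correct and follows the same overall strategy as the paper --- an inductive walk through $V_F$, realising each tile $L_{v_n}$ by leaves of the foliated blocks and recording the needed boundary identifications as prescribed pairs $(a_j,b_j)$, then invoking minimality together with Lemma~\ref{l:dense out inn} and Remark~\ref{r:synchronization} to place every point inside the required window, and finally stripping a collar of $\partial W^\nu_F$ to repair any accidental boundary self-identifications.

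The one genuine organisational difference is your bipartite scheme. You alternate the side $s(v_n)\in\{\triangleright,\triangleleft\}$ along every edge of $T_F$, so that consecutive main leaves $M_{k(n)},M_n$ always lie in opposite blocks and a single pair $(a_j,b_j)$ suffices to realise the edge. The paper instead keeps the ``main'' tile leaf on the $\triangleleft$ side at every non-root vertex and inserts, at each step, a fresh type-$0$ leaf in $\FF^\triangleright$ as an explicit \emph{bridge} between the parent tile and the new one (Part~3 of the proof). Your scheme is more economical in the number of pairs per vertex and avoids the auxiliary bridge leaves; the paper's scheme avoids having to track which block each tile sits in and makes the partition $I_i$ of indices into tiles slightly more uniform. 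Both rely on exactly the same ingredients and both produce synchronized sequences with the stated property, so either is a valid proof.
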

\begin{proof}
The proof is inductive but there are several cases to consider, for the sake of readability we organize the proof into several parts.
\proofpart{1}{The root vertex}
Choose first $a_0,a_{-1}\in T^\triangleright$ belonging to the same leaf $L^{\triangleright}_{\bm{0}}\in\cL^1$ (i.e., $L^{\triangleright}_{\bm{0}}=L^{\triangleright}_{\bm{1}}$) and such that $B^\triangleright_{\bm{0}}$ is outer and $B^\triangleright_{\bm{-1}}$ is inner. 

If $\nu(v_0)=0$, then choose $c_0, c_{-1}\in T^\triangleleft$ such that $L^{\triangleleft}_{\bm{0}}, L^\triangleleft_{\bm{-1}}\in\cL^0$ are different leaves. The boundary union of these leaves via the identifications $B^\triangleright_{\bm{i}}\to B^\triangleleft_{\bm{i}}$, $i=-1,0$, is a space homeomorphic to the noncompact tile $P^1$. This is, in this case, the definition of $N_{\bm{\mathring{v}}}$ in Definition~\ref{d:realization W_F}.

If $\nu(v_0)=1$, then choose $c_0,c_{-1}\in T^\triangleleft$ belonging to the same leaf $L^{\triangleleft}_{\bm{0}}\in\cL^1$ and such that $B^\triangleleft_{\bm{0}}$ is outer and $B^\triangleleft_{\bm{-1}}$ is inner. The boundary union of these leaves via the identifications $B^\triangleright_{\bm{i}}\to B^\triangleleft_{\bm{i}}$, $i=-1,0$, is a space homeomorphic to the noncompact tile $H^1$. This is, in this case, the definition of  $N_{\bm{\mathring{v}}}$ in Definition~\ref{d:realization W_F}.

This is the case $0$ of an induction process that will now be depicted.

\proofpart{2}{The induction scheme}
Let $n\in\N\cup\{0\}$, assume that there exists $K_n\in\N\cup\{0\}$ such that $a_k$, $c_k$ were defined for $-1\leq k\leq K_n$ and the boundary gluings $B^\triangleright_{\bm{k}}\to B^\triangleleft_{\bm{k}}$, for $-1\leq k\leq K_n$, applied to the leaves $L^\triangleright_{\bm{k}}$'s and $L^\triangleleft_{\bm{k}}$'s, defines a manifold homeomorphic to the quotient manifold of $\bigsqcup_{i=0}^n N_{v_i}$ by the equivalence relation given in Definition~\ref{d:realization W_F}. 

We shall make two assumptions, called A1 and A2 in our induction hypothesis. Although they are not necessary, they will make the proof simpler.
\begin{itemize}
\item[A1:] for each $0\leq i\leq n$ there exists a partition $\{I_{i,n}\mid\ 0\leq i\leq n\}$ of $\{-1,\dots, K_n\}$ so that the identifications $B^\triangleright_{\bm{j}}\to B^\triangleleft_{\bm{j}}$ between the leaves $L^\triangleright_{\bm{j}}$'s  and $L^\triangleleft_{\bm{j}}$'s, for $j\in I_{i,n}$, define a manifold homeomorphic to the corresponding noncompact tile $N_{v_i}$.

\item[A2:] If $\deg(v_i)=3$, then we shall assume also that there exists $j(i,n)\in I_{i,n}$ such that $L^\triangleright_{\bm{j(i)}}$ is a leaf of type $1$ in $\FF^\triangleright$. If $\deg(v_i)=2$, then set $j(i,n)$ as the maximum of $I_{i,n}$.
\end{itemize}

For the sake of readability, let $K$, $I_i$ and $j(i)$ denote respectively $K_n$, $I_{i,n}$ and $j(i,n)$.

\proofpart{3}{The bridge to the case $n+1$}

Let $e$ be the (unique) edge such that $v_{n+1}=t(e)$ and set $v_m =o(e)$ for a unique $0\leq m\leq n$.

Choose $a_{K+1}\in T^\triangleright$ such that $a_{K+1}\in L^\triangleright_{\bm{j(m)}}$ and $c_{K+1}\in T^\triangleleft$ such that $L^\triangleleft_{\bm{K+1}}\in\cL^0$ is a leaf different from $L^\triangleleft_{\bm{i}}$ for $-1\leq i\leq K$. 

If $\deg(v_m)=3$, then $N_{v_m}$ is homeomorphic to the noncompact tile $P^1$ or $H^1$ (depending on its color). If $n+1=q(m)$ (resp. $n+1=i(m)$), then choose $a_{K+1}$ such that $B^\triangleright_{\bm{K+1}}$ is an outer (resp. inner) boundary component of $L^\triangleright_{\bm{j(m)}}$ (they exists by assumption A2). This choice of $a_{K+1}$, $c_{K+1}$ do not modify the topology produced in the previous steps since $L^\triangleleft_{\bm{K+1}}$ is a leaf of type $0$, this can be considered as a topological bridge to the next inductive step.

\proofpart{4}{The case where $\nu(v_{n+1})=0$}
If $\nu(v_{n+1})=0$ and $\deg(v_{n+1})=2$ (resp. $\deg(v_{n+1})=3)$, then choose $a_{K+2}$ and $c_{K+2}$  such that $c_{K+2}\in L^\triangleleft_{\bm{K+1}}$ and $L^\triangleright_{\bm{K+2}}\in\cL^0$ (resp. $L^\triangleright_{\bm{K+2}}\in\cL^1$) is different from the leaves used in the previous steps of the induction.

Observe that the boundary union of $L^\triangleright_{\bm{K+2}}$ with $L^\triangleleft_{\bm{K+2}}=L^\triangleleft_{\bm{K+1}}$ (identifying the boundary components $B^\triangleright_{\bm{K+2}}$ with $B^\triangleleft_{\bm{K+2}}$) is homeomorphic to a leaf of type $0$ (resp. type $1$). That is exactly the definition of $N_{v_{n+1}}$ in this case and this is joined with the construction performed in the previous steps via the identification $B^\triangleright_{\bm{K+1}}\to B^\triangleleft_{\bm{K+1}}$ (the bridge). 

It follows, in this case, that $K_{n+1}=K_n+2$, $I_{n+1,n+1}=\{K_n+2\}$, $I_{n,n+1}=I_{n,n}\cup\{K+1\}$ and $I_{i,n+1}=I_{i,n}$ for $0\leq i < n$, these sets clearly form a partition of $\{-1,\dots,K+2\}$ and satisfy assumptions A1 and A2.

\proofpart{5}{The case where $\nu(v_{n+1})=1$}
Choose first $c_{K+2}\in L^\triangleleft_{\bm{K+1}}$ and $a_{K+2}\in T^\triangleright$ such that $L^\triangleright_{\bm{K+2}}\in\cL^1$ is a leaf different from the used in the previous inductive steps and $B^\triangleright_{\bm{K+2}}$ is inner.\\

If $\deg(v_{n+1})=2$, then choose $a_{K+3},c_{K+3}$ so that $a_{K+3}\in L^\triangleright_{\bm{K+2}}$, $c_{K+3}\in L^\triangleleft_{\bm{K+1}}$  and $B^\triangleright_{\bm{K+3}}$ is outer. Observe that the identifications $B^\triangleright_{\bm{i}}\to B^\triangleleft_{\bm{i}}$, for $K+1\leq i\leq K+3$, defines a manifold homeomorphic to $H^0$ and is attached to the construction performed in the previous steps via the bridge $L^\triangleleft_{\bm{K+1}}$. 

In this case $K_{n+1}=K_n+3$, $I_{n+1,n+1}=\{K_n+2,K_n+3\}$, $I_{n,n+1}=I_{n,n}\cup\{K+1\}$, and $I_{i,n+1}=I_{i,n}$ for $0\leq i<n$. These sets clearly form a partition of $\{-1,\dots,K+3\}$ and satisfy assumptions A1 and A2.\\

If $\deg(v_{n+1})=3$, then choose $a_{K+3}, a_{K+4}\in T^\triangleright$ such that $a_{K+3}, a_{K+4}\in L^\triangleright_{\bm{K+2}}$. Choose now $c_{K+3},c_{K+4}\in T^\triangleleft$ such that $L^\triangleleft_{\bm{K+3}}\in\cL^1$ is a leaf different from the used in the previous inductive steps and $c_{K+4}\in L^\triangleleft_{\bm{K+3}}$. Choose these points so that $B^\triangleright_{\bm{K+3}}$, $B^\triangleleft_{\bm{K+3}}$  are both inner and $B^\triangleright_{\bm{K+4}}$, $B^\triangleleft_{\bm{K+4}}$ are both outer components. 

This is equivalent to attaching a copy of the noncompact tile $H^1$ (see Figure~\ref{f:H1}) to the construction given in the previous inductive steps via the bridge $L^\triangleleft_{\bm{K+1}}$.

It this last case $K_{n+1}=K_n + 4$, $I_{n+1,n+1}=\{K+2,K+3, K+4\}$, $I_{n,n+1}=I_{n,n}\cup\{K+1\}$ and $I_{i,n+1}=I_{i,n}$ for $0\leq i<n$. These sets clearly form a partition of $\{-1,\dots,K+4\}$ and satisfy assumptions A1 and A2.

Recall that the choice of $a_k$ and $c_k$ can be made in a synchronized way with $b_k$ as was observed in Remark~\ref{r:synchronization}. This follows from the minimality of (outer and inner) boundary components and the fact that leaves of type $0$ and $1$ are infinite in any foliated block.  Observe also that $a_n$ and $c_n$ can also  be chosen so that $L^\star_{\bm{n}}$ does not meet $0$, this will be important for a future reference.

\proofpart{6}{End of the proof}
Beginning with the root element $v_0$, this construction can be performed inductively in $n$. This process defines increasing sequences $a_k, c_k$ for $k\geq -1$ synchronized with the chosen $b_k$ (see Remark~\ref{r:synchronization}). The identifications of the boundary components $B^\triangleright_{\bm{k}}\to B^\triangleleft_{\bm{k}}$ reproduces the identifications of the noncompact tiles $N_{v_n}$, $n\geq 0$, in the construction of the manifold $W^\nu_F$.

A similar argument can be applied to $n<-1$, just by interchanging the words outer and inner, target and origin, $K$ by $-K$, $+$ by $-$ and choosing the new leaves in $\FF^\triangleright$ and $\FF^\triangleleft$ different from those chosen in the first inductive process. Let $\textbf{\texttt{a}},\textbf{\texttt{c}}$ denote, respectively, the bisequences $a_n$ and $c_n$.

For every $(c,d)\in \cZ^{1\ast}_+(\ell;0,\textbf{\texttt{a}},\textbf{\texttt{b}})\times \cZ^{1\ast}_+(\ell;0,\textbf{\texttt{b}},\textbf{\texttt{c}})$ we have that $d\circ c (a_j)=c_j$ and therefore the above inductive construction of $W^\nu_F$ defines a proper map $\jmath: W^\nu_F\to L^{c,d}_{b_0}$ from $W^\nu_F$ to the leaf $L^{c,d}_{b_0}$  of $\FF_{c,d}^{\Join}$. This may fail to be an embedding at the boundary of $W^\nu_F$ for some choices of $(c,d)$ since the boundary componetns not identified by the bisequences $\textbf{\texttt{a}}$ and $\textbf{\texttt{c}}$ are uncontrolled and might be identified by the attaching map $d\circ c$. In this case, we can remove a tubular neighborhood of the boundary components of $W^\nu_F$ and the restriction of $\jmath$ to the resulting subset will define the desired embedding.
\end{proof}

\begin{prop}\label{p:realization}
Let $S$ be an noncompact oriented surface. There exist synchronized increasing bisequences $\textbf{\texttt{a}}$, $\textbf{\texttt{b}}$ and $\textbf{\texttt{c}}$ in $S^1$ such that there exists a residual set $\Omega_S\subset \cZ^{1\ast}_+(\ell;0,\textbf{\texttt{a}},\textbf{\texttt{b}})\times \cZ^{1\ast}_+(\ell;0,\textbf{\texttt{b}},\textbf{\texttt{c}})$ where the 
leaf $L^{c,d}_{b_0}$ of $\FF^{\Join}_{c,d}$ meeting $b_0$ is homeomorphic to $S$ for every $(c,d)\in\Omega_S$.
\end{prop}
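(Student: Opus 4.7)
The plan is to combine the realization construction of Proposition~\ref{p:F realization} with the genericity results of Lemmas~\ref{l:generic 2.5} and~\ref{l:generic 3}, and then identify the resulting leaf with a nonrecurrent completion. First, I would encode $S$ as follows: by Proposition~\ref{p:surface type 0} together with Proposition~\ref{p:main prop}, one can select a subtree $T_F$ of the binary tree (with no dead ends, containing the root element) and a coloring $\nu\colon V_F\to\{0,1\}$ so that $\widehat{W_F^\nu}$ is homeomorphic to $S$. Feeding $(T_F,\nu)$ into Proposition~\ref{p:F realization} produces synchronized bi-infinite sequences $\textbf{\texttt{a}},\textbf{\texttt{b}}$ and, for every $(c,d)\in\cZ^{1\ast}_+(\ell;0;\textbf{\texttt{a}},\textbf{\texttt{b}})\times\cZ^{1\ast}_+(\ell;0;\textbf{\texttt{b}},\textbf{\texttt{b}})$, a proper submanifold $M_{c,d}\subset L^{c,d}_{b_0}$ homeomorphic to $W_F^\nu$, assembled from the predefined type-$0$ and type-$1$ leaves of $\FF^\triangleright$ and $\FF^\triangleleft$ meeting the transversal at the points $a_k,b_k$.

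Next I would define $\Omega_S$ as the intersection of the residual set from Lemma~\ref{l:generic 2.5} (forcing every $\rho_{c,d}$-orbit of every $b_m$ to avoid $0$ and $\infty$) with the residual set from Lemma~\ref{l:generic 3} (forcing $\Stab_{\rho_{c,d}}(b_m)=\Stab^{\textsc{P}}_{b_m}(\ell;0;\textbf{\texttt{a}};\textbf{\texttt{b}};\textbf{\texttt{b}})$ for all $m\in\Z$). This intersection is residual in the product Baire space. The standing hypotheses of those lemmas, namely that no $a_n$ or $b_n$ lies in the $G^\triangleright$- or $\varsigma^{-1}G^\triangleleft\varsigma$-orbit of $0$ or $\infty$, can be imposed during the construction of the sequences in Proposition~\ref{p:F realization}, since these excluded orbits are countable while the outer/inner boundary sets on the parabolic tori are dense by Lemma~\ref{l:dense out inn}.

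The core of the argument will be to show that for $(c,d)\in\Omega_S$ the leaf $L^{c,d}_{b_0}$ is exactly the nonrecurrent completion $\widehat{M_{c,d}}$. Each noncompact boundary component of $M_{c,d}$ corresponds to an $\ell$-orbit on the transversal at a non-predefined point, and following the action $\rho_{c,d}$ on that point attaches further leaves of $\FF^\triangleright$ and $\FF^\triangleleft$ (cf.\ Remark~\ref{r:interpretation}). Two claims do the job: first, every such attached leaf must be of type $0$, for a type-$1$ or type-$2$ attachment at a point $x=w_{c,d}(b_0)$ with cyclic holonomy generator $g$ would force $w^{-1}\cdot g\cdot w\in\Stab_{\rho_{c,d}}(b_0)$, an element that does not belong to $\Stab^{\textsc{P}}_{b_0}$ because the corresponding cycle of leaves is not predefined, contradicting Lemma~\ref{l:generic 3}; second, no two distinct boundary attachments can close up into a loop, by the same argument applied to the would-be loop element. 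Together these claims say that the attachments form a tree of boundary unions of type-$0$ leaves, which by Remark~\ref{r: plane} is topologically the same as attaching a half-plane at each noncompact boundary component of $M_{c,d}$. Iterating matches the inductive definition of a nonrecurrent filling, so $L^{c,d}_{b_0}\cong\widehat{M_{c,d}}\cong\widehat{W_F^\nu}\cong S$ by Proposition~\ref{p:main prop}.

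The main obstacle I anticipate is the third step: translating the stabilizer identity of Lemma~\ref{l:generic 3} into the combinatorial statement that every non-predefined attachment is a type-$0$ leaf glued in a tree-like fashion, and verifying that the density of these attachments, supplied by the minimality of the trace foliations on the parabolic tori, matches the density required by the nonrecurrent filling process. Once the two inductive processes are identified, Proposition~\ref{p:main prop} delivers the final homeomorphism with $S$.
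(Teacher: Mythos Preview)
Your proposal is essentially the paper's own argument: build the sequences via the realization propositions, take $\Omega_S$ from Lemma~\ref{l:generic 3}, and argue that since all stabilizers of $b_0$ are predefined, the complement of the embedded $W_F^\nu$ in $L^{c,d}_{b_0}$ is assembled only from type-$0$ leaves in a tree pattern, hence is a nonrecurrent completion. Your two claims (attached leaves are type~$0$; no extra cycles) are exactly what the paper invokes, though it phrases the second one as ``any chain of leaves connecting two boundary components of $W$ yields a stabilizer, hence is predefined, hence lies among the $L^\triangleright_{a_i},L^\triangleleft_{b_j}$'' (see Remark~\ref{r:interpretation}).

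There is one genuine omission. Proposition~\ref{p:main prop} and the construction of $W_F^\nu$ via a subtree $T_F$ with no dead ends only cover surfaces with at least two ends (as noted just before Subsection~2.1). The one-ended surfaces (the plane, the plane with $k$ handles, the Loch Ness monster) are handled in the paper by a separate construction, Proposition~\ref{p:one end realization}, which produces sequences $\textbf{\texttt{a}},\textbf{\texttt{b}}$ so that $H_k^0$ (rather than $W_F^\nu$) embeds properly in $L^{c,d}_{b_0}$; the rest of the argument then runs identically with $H_k^0$ in place of $W_F^\nu$ and the elementary identification $\widehat{H_k^0}\cong S$ from Subsection~\ref{ss:one end surfaces}. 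You should insert this case distinction at the start.
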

\begin{proof}
Let us take first an arbitrary increasing bisequence $\textbf{\texttt{b}}$ accumultaing to $0$ and $\ell(0)$.

If $S$ has just one end, then  $\textbf{\texttt{a}}$ and $\textbf{\texttt{c}}$ as the bisequences constructed in Proposition~\ref{p:one end realization} so that $H^0_k$ is properly embedded in $L^{c,d}_{b_0}$ for all $(c,d)\in\cZ^{1\ast}_+(\textbf{\texttt{a}},\textbf{\texttt{b}})\times \cZ^{1\ast}_+(\textbf{\texttt{b}},\textbf{\texttt{c}})$, where $k$ is the genus of $S$ (possibly infinite). If $S$ has more than one end, then set $T_F$ be a subtree of the binary tree with no dead ends and $\nu$ a vertex coloring such that $S$ is homeomorphic to $S^\nu_F$. Let $\textbf{\texttt{a}}$ and $\textbf{\texttt{c}}$ be the bisequences constructed in Proposition~\ref{p:F realization} so that $W^\nu_F$ is properly embedded in $L^{c,d}_{b_0}$ for all $(c,d)\in\cZ^{1\ast}_+(\textbf{\texttt{a}},\textbf{\texttt{b}})\times \cZ^{1\ast}_+(\textbf{\texttt{b}},\textbf{\texttt{c}})$. 

 The bisequences $\textbf{\texttt{a}}=\{a_n\}_{n\in\Z}$ and $\textbf{\texttt{c}}=\{c_n\}_{n\in\Z}$ were chosen so that the leaves $L^\star_{\bm{n}}$ do not meet the point $0\in T^\star$ for all $n\in\Z$. This implies that $a_n$ and $c_n$ do not belong to the orbit of $0$ for the projective actions of $G^\star$ in $T^\star$, moreover $a_n$ and $c_n$ do not belong to the $G^\star$-orbits of $\infty$ since the leaves $L^\star_{\bm{n}}$ are never of type $2$.


Let $\Omega_S$ be the residual set given by Lemma~\ref{l:generic 3} such that $\Stab_{\rho_{c,d}}(a_0)=\Stab^\textsc{P}_{b_0}(\ell;0,\textbf{\texttt{a}};\textbf{\texttt{b}};\textbf{\texttt{c}})$ for all $(c,d)\in\Omega_S$.  Let $W$  denote $H^0_k$ or $W^\nu_F$ and let $\jmath: W\to L^{c,d}_{b_0}$ be the proper map defined in Propositions~\ref{p:one end realization} and \ref{p:F realization}.

Observe that for any $(c,d)\in\Omega_S$ there are no more stabilizers than the predefined ones.  

\begin{afirm}\label{cl:predefined filling}
Each boundary component of $\jmath(W)$ is attached with a single boundary component of some leaf of type $0$ in some $\FF^\star$ different from the leaves $L^\star_{\bm{n}}$, $n\in\Z$. Moreover, leaves attached with $\jmath(W)$ cannot be attached to each other.
\end{afirm}

\textit{Proof of Claim}~\ref{cl:predefined filling}

 Let $W^\star = \bigcup_{m\in\Z} L^\star_{\bm{m}}$, observe that $\jmath(W)$ can be obtained as the boundary union of $W^\triangleright$ with $W^\triangleleft$ via $d\circ c$. 

Each boundary component of $\jmath(W)$ defines two boundary components of leaves in the foliated projective blocks $\FF^\star$, one of the form $B^\triangleright_{c^{-1}(x)}$ and other of the form $B^\triangleleft_{d(x)}$ for some $x\in S^1$ such that $\bar{x}$ is not an element of the sequence $\textbf{\texttt{b}}$. 

One of these leaves, $L^\triangleright_{c^{-1}(x)}$ or $L^\triangleleft_{d(x)}$, is included in some $W^\star$.

Suppose without loss of generality that $L^\triangleright_{c^{-1}(x)}$ is included in $W^\triangleright$, then $L^\triangleleft_{d(x)}$ must be a leaf of type $0$. Otherwise, a leaf of type $1$ is in correspondence with a projective map that fixes $d(x)$ and therefore, modulo conjugation, it would define a  stabilizer of $b_0$ in $\Gamma^{\Join}_\ell$ (for the $\rho_{c,d}$ action) that is not predefined since $\bar{x}\notin \{b_n\mid\ n\in\Z\}$. This is in contradiction with the fact that $(c,d)\in\Omega_S$.

Moreover $L^\triangleleft_{d(x)}$ cannot be attached to $W^\triangleright$ by more than one boundary component, as this would define an stabilizer of $b_0$ in $\Gamma^{\Join}_\ell$ whose associated cycle of points contains the point $x$, hence it is not a predefined stabilizer, again a contradiction. By the same reason $L^\triangleleft_{d(x)}$ cannot be included in $W^\triangleleft$ nor attached to any leaf in $\FF^\triangleright$ that is also attached to $W^\triangleleft$. This completes the proof of Claim~\ref{cl:predefined filling}.

The argument given in Claim~\ref{cl:predefined filling} can be repeated inductively: set $W_0=W$ and set $W_n^\triangleright$ (resp. $W_n^\triangleleft$) be the union of $W_{n-1}^\triangleright$ (resp. $W_{n-1}^\triangleleft$) with the leaves in $\FF^\triangleright$ (resp. $\FF^\triangleleft$) which are attached to $W_{n-1}^\triangleright$ (resp. $W_{n-1}^\triangleleft$) by some boundary component, then the leaves of $\FF^\star$ attached to $W_n^\star$ are new leaves of type $0$ that are attached by just one boundary component and they do not attach to each other. This is exactly the process depicted in Remark~\ref{r: plane}.

Observe that the leaf $L^{c,d}_{b_0}$ is homeomorphic to the boundary union of $\bigcup_{n\in\N} W_n^\triangleright$ with $\bigcup_{n\in\N} W_n^\triangleleft$.
It follows that the leaf passing through $b_0$ is homeomorphic to the complete filling of $W$ (that is also homeomorphic to its interior). Since $\Intr(W)$ is homeomorphic to $S$ (see Proposition~\ref{p:W=S}) it follows that $L^{c,d}_{b_0}$ is homeomorphic to $S$.
\end{proof}

As a consequence of the proof of Proposition~\ref{p:realization} we can complete the proof of the main Theorem~\ref{Main Theorem}.

\begin{proof}[Proof of Theorem~\ref{Main Theorem}]
Let $\sg$ be a separating loop in the typical orbifold $O$ and let $\FF^\star=\overline{\FF}(G^\star,O^\star,h^\star)$ be foliated projective blocks for $\star\in\{\triangleright,\triangleleft\}$. Let $S_n$, $n\in\N$, be any countable family of noncompact oriented surfaces. Let $\{b_n\}_{n\in\Z}$ by any increasing bisequence with limit points $0$ and $\ell(0)$. Let $\xi: \N\times\Z\to\Z$ any bijection so that $\xi(n,k)\leq\xi(n,m)$ if and only if $k\leq m$. For each $n\in \N$ define $b_{n,k}=b_{\xi(n,k)}$. For all $n\in \N$, $\{b_{n,k}\}_{k\in\Z}$ is a subsequence of $b_n$ and therefore is also increasing and accumulates to the same limit points as $b_n$.

By Propositions~\ref{p:one end realization} and \ref{p:F realization}, for each $n\in\N$ there exist two bisequences $\textbf{\texttt{a}}_n=a_{n,k}$ and $\textbf{\texttt{c}}_n=c_{n,k}$, $k\in\Z$, synchronized with $\{b_{n,k}\}_{k\in\Z}$ and a residual set $\Omega_n\subset \cZ^{1\ast}_+(\textbf{\texttt{a}}_n,\textbf{\texttt{b}}_n)\times \cZ^{1\ast}_+(\textbf{\texttt{b}}_n,\textbf{\texttt{c}}_n)$ provided by Proposition~\ref{p:realization} so that the leaf $S_n$ is homeomorphic to the leaf $L^{c,d}_{b_{n,0}}$ of $\FF^{\Join}_{c,d}$ for all $(c,d)\in\Omega_n$. Leaves of $\FF^\triangleright$ and $\FF^\triangleleft$ passing through points in different bisequences must be 
chosen to be different, i.e., if $n\neq m$, then $a_{n,j}$ and $c_{n,j}$ does not belong to any $L^\triangleright_{a_{m,k}}$, $L^\triangleleft_{c_{m,k}}$ for all $j,k\in\Z$.

Let $\textbf{\texttt{a}}$ and $\textbf{\texttt{c}}$ be the bisequences defined by $a_m=a_{\xi^{-1}(m)}$ and $c_m=b_{\xi^{-1}(m)}$, respectively, obtained from gathering together the bisequences $\textbf{\texttt{a}}_n$ and $\textbf{\texttt{c}}_n$ defined above. These are still synchronized with $b_m$ if the $a_{n,k}$'s and $c_{n,k}$'s are chosen sufficiently close to $b_{n,k}$ (this always can be done by minimality following Remark~\ref{r:synchronization}).

By Lemma~\ref{l:generic 3} there exists a residual set $\Omega\subset \cZ^{1\ast}_+(\textbf{\texttt{a}},\textbf{\texttt{b}})\times \cZ^{1\ast}_+(\textbf{\texttt{b}},\textbf{\texttt{c}})$ where $\Stab_{\rho_{c,d}}(b_m)=\Stab^\textsc{P}_{b_m}(\textbf{\texttt{a}};\textbf{\texttt{b}};\textbf{\texttt{c}})$ for all $(c,d)\in\Omega$. The assumption that leaves associated to different points of the bisequences are different implies that $\Stab^\textsc{P}_{b_{n,k}}(\textbf{\texttt{a}};\textbf{\texttt{b}};\textbf{\texttt{c}})=\Stab^\textsc{P}_{b_{n,k}}(\textbf{\texttt{a}}_n;\textbf{\texttt{b}}_n;\textbf{\texttt{c}}_n)$. Therefore the leaf $L^{c,d}_{b_{n,0}}$ is homeomorphic to $S_n$ for all $(c,d)\in\Omega$ as desired.

The ambient manifold of $\FF^{\Join}_{\id,\id}$ is Seifert and homeomorphic to $M(O,\sg)$ (see Remark~\ref{r:ambient manifold}), whose basis orbifold is typical and homeomorphic (as an orbifold) with $O$. It follows, since $c,d$ are close to $\id$, that the ambient manifold of $\FF^{\Join}_{c,d}$ is also $M(O,\sg)$ for such a choice of $c,d$.
\end{proof}

\section{Open questions}

It is an interestng question if there exists a critical regularity for the coexistence of finite and infinite geometric types. More precisely

\begin{quest}
Can surfaces of finite and infinite geometric type coexist in a transversely $C^k$ codimension one minimal foliation on a compact $3$-manifold for some $k\geq 1$? If possible, what topologies can coexist?
\end{quest}

The answer to the previous question is true for $k=0$ and also in the category of bi-Lipschitz homeomorphisms. It is unclear if our examples can be regularized to $C^1$, i.e., if there exists a $C^0$ conjugation between some of our examples and a $C^1$-foliation. Regarding this question we want to note that the centralizer of a generic $C^1$ diffeomorphism is trivial \cite{Bonatti-Crovisier-Wilkinson} but there exist $C^1$-diffeomorphisms with large\footnote{Large in the sense that it contains flows supported in small neighborhoods, this is our main tool for our generic results.} centralizers \cite{Navas}. For $C^k$, $k\geq 2$, this is impossible by the Kopell Lemma.

Another question is related to the possible ambient manifolds. All our examples are foliations on Seifert manifolds and transverse to the fibers, in every case they are finitely covered by suspensions over hyperbolic surfaces (see Remark~\ref{r:Selberg}). It seems also possible to export our construction to some graph manifolds. This is left for a future work.

\begin{quest}
Let $M$ be a fixed closed $3$-manifold. Can every noncompact orientable surface be homeomorphic to a leaf of a minimal foliation on $M$? Can arbitrary topologies of finite and infinite geometric type coexist as leaves of a minimal foliation of $M$? If not, give precise obstructions.
\end{quest}

Observe that the previous question includes the case of non-oriented surfaces, which were not treated in this work. Recall that leaves of hyperbolic foliations must be orientable. It seems that our construction can be improved to obtain a minimal foliation with an arbitrary noncompact nonorientable surface as one of its leaves, in this case the ambient Seifert manifold can be nonorientable. A first case of interest is to understand the hyperbolic non-typical orbifolds, for instance those where the incompressible loop $\sigma$ does not disconnect the orbifold, this leads to extend our results from amalgamated products to HNN extensions. We avoid the case of HNN extensions in this work since, in this case, $h(\beta_\triangleright)$ and $h(\beta_\triangleleft)^{-1}$ are never projectively conjugated, and this would lead to a nonpreserving orientation gluing map between the parabolic tori. This more general situation will be studied in a forthcoming work dealing with the realization of nonorientable noncompact surfaces.

In \cite{ABMPW} a minimal hyperbolic lamination is given on a compact space where all the topologies of oriented surfaces appear as leaves of that lamination. We show that any countable family of topologies can coexist in a codimension one minimal hyperbolic foliation of a $3$-manifold, but it is unclear if our method of proof can be improved to realize all the topologies of noncompact surfaces in the same foliation.

\begin{quest}
Does there exist a minimal codimension one hyperbolic foliation on a closed $3$-manifold such that every noncompact orientable surface is homeomorphic to a leaf of that foliation?
\end{quest}

The last question is related to the case where the foliation is given by the suspension of a group action. In our examples, when $O$ is a hyperbolic surface, the Euler number is always even but this could be an artifact from our method of construction. Recall also that, when the Euler number is maximal, the leaves of minimal foliations must be planes or cylinders \cite[Th\'eor\'eme 3]{Ghys2}. So it is a natural question if there exist more obstructions for rich leaf topology depending on the Euler number of the ambient Seifert manifold.

\begin{quest}
Let $\Sigma$ be a closed surface of genus $g\geq 2$ and let $0\leq n <\chi(\Sigma)$. Can every noncompact oriented surface $S$ be homeomorphic to a leaf of a foliation given by a suspension of an action $\rho_S: \pi_1(\Sigma)\to\Homeo_+(S^1)$ with $|\eu(\rho_S)| = n$? If true, what topologies can coexist in the same foliation?
\end{quest}

\end{document}